\newtheorem{thm}{Theorem}[subsection]
\theoremstyle{definition}
\newtheorem{cor}[thm]{Corollary}
\newtheorem{dfn}[thm]{Definition}
\newtheorem{lem}[thm]{Lemma}
\theoremstyle{remark}
\newtheorem{calc}{\underline{Calculation}}[subsection]
\newtheorem{result}{Result}
\newtheorem{op}{Open Problem}
\newtheorem*{opAlt1}{Open Problem 1 (Alternate Version)}
\newtheorem*{opAlt2}{Open Problem 2 (Alternate Version)}
\def\tp{\otimes}
\def\R{\mathbb{R}}
\def\C{\mathbb{C}}
\newif\ifPrivateMode
\title{Temperley-Lieb and Birman-Murakami-Wenzl like relations from multiplicity free semi-simple tensor systems}
\author{Peter E. Finch}\affil{Institut f\"ur Theoretische Physik, Leibniz Universit\"at Hannover,
Appelstra\ss{}e 2, 30167 Hannover, Germany}
\date{\today}
\begin{document}
\maketitle


\begin{abstract}
\noindent
In this article we consider conditions under which projection operators in multiplicity free semi-simple tensor categories satisfy Temperley-Lieb like relations. This is then used as a stepping stone to prove sufficient conditions for obtaining a representation of the Birman-Murakami-Wenzl algebra from a braided multiplicity free semi-simple tensor category. The results are found by utalising the data of the categories. There is considerable overlap with the results found in \cite{FiKaMa2016}, where proofs are shown by manipulating diagrams.
\end{abstract}

\section{Introduction}
The Temperley-Lieb algebra is a diagram algebra which first appeared in the context of two-dimensional statistical mechanics \cite{TemLie1971}. Its importance in one-dimensional quantum physics and two-dimensional statistical mechanical models has not diminished as the identification of representations of the algebra allows the identification of integrable points. This subsequently allows the application of various Bethe ans{\"a}tze in order to compute physical quantities exactly. One can also apply use the equivalence of such models to the Heisenberg spin-1/2 XXZ model \cite{AufKlu2010}. There has also been substantial interest in the other aspects of the Temperley-Lieb algebra including its representations \cite{MartinBook1995,Westbu1995} and its connections to other fields such as the study of knot theory, quantum groups, and quantum information \cite{Abrams2009,BatKun1991,WaDeAk1989,ZKG2005}.

A key result in the study of representation theory of the Temperley-Lieb algebra is Schur-Weyl duality \cite{WeylBook1939} (Is this appropriate). Amongst other things, Schur-Weyl duality states that given $L$ copies of the spin-1/2 representation of $U_{q}(sl(2))$, representations of the generators of the Temperley-Lieb correspond to projection operators onto the trivial representation. This connection between representations or the TL generators and projection operators onto trivial representations has also been observed in many other cases, such as other quantum groups \cite{BatKun1991,KobLim1996,Reshet1988,Zhang1991} and Drinfeld doubles of finite group algebras \cite{Gould1993}.

This article follows these observations and determines sufficient and necessary conditions for certain projection operators to satisfy the Temperley-Lieb like relations. The results are expressed in terms of certain categories of representations, specifically semi-simple tensor categories, rather than explicit representations. Results are subsequently extended to Birman-Murakami-Wenzl (BMW) which contains a subalgebra isomorphic to the Temperley-Lieb algebra. When applicable one can also invoke Pasquier's face-vertex correspondence to apply the results to explicit representation \cite{Pasqui1988}.

\subsection{Outline of results}
The core results of this paper concern the construction of operators that satisfy Temperley-Lieb and Birman-Murakami-Wenzl like relations. The proofs presented are technical in nature and consequently to assist the reader we first state the results along with opens problems in simpler language. The first result presented gives sufficient and necessary conditions to obtain Temperley-Lieb like relations from two-site projection operators. 

\begin{center} \begin{minipage}{0.9\textwidth}
\vspace{0.2cm}
\begin{result} \label{ResGenTL}
In a multiplicity free semi-simple tensor category checking that two-site projection operators satisfy Temperley-Lieb like relations is equivalent to checking that a special set of gauge invariant quantities contains precisely one distinct number.
\end{result}
\vspace{0.2cm}
\end{minipage} \end{center}

\noindent
This results, while mathematically interesting, does not provide much insight into when projections operators satisfy Temperley-Lieb like relations. Furthermore, to apply the above result one needs to know the structure constants, known as F-moves or generalised 6-j symbol, of the semi-simple tensor category. Finding such constants is often a strenuous task. It is possible to give weaker statements, specifically, ones with more restrictive conditions, which are more useful.

\begin{center} \begin{minipage}{0.9\textwidth}
\vspace{0.2cm}
\begin{result} \label{ResPart1DTL}
Consider a  multiplicity free semi-simple tensor category and two sequences of simple objects, $\lambda_{1}\dots\lambda_{L}$ and $\nu_{1}\dots\nu_{L-1}$. If $\lambda_{i-1}\tp\nu_{i}$ and $\nu_{i}\tp\lambda_{i+2}$ are simple and $\lambda_{i}\tp\lambda_{i+1} \cong \nu_{i}\oplus\cdots$ then it is possible to construct operators that satisfy Temperley-Lieb like relations. The operators satisfying these relations correspond to two-site projection operators onto the simple objects $\nu_{i}$.
\end{result}

\begin{result} \label{Res1DTL}
In a multiplicity free semi-simple tensor category two-site projection operators onto one-dimensional objects satisfy Temperley-Lieb like relations.
N.B. An object is said to be one-dimensional if the tensor product with it and any other simple object is also simple.
\end{result}
\vspace{0.2cm}
\end{minipage} \end{center}

\noindent
To apply either Result 2 or 3 one only needs knowledge of the simple objects in the category and their tensor product decomposition. This is seen as a benefit as one can quickly see if there is guaranteed to be operators that satisfy Temperley-Lieb like relations. It is Result 3 that provides the simplest constraint with its implication that two-site projection operators onto one-dimensional objects satisfy Temperley-Lieb like relations.

Indeed the most well-known special cases of these results can be deduced from Result 3. For instance in the tensor product of $\mathcal{L}$ copies the spin-1/2 (two-dimensional) representation of $U_{q}(su(2))$ the two-site projection operator onto the trivial (a.k.a. spin-0 or one-dimensional) representation satisfies the Temperley-Lieb relations. This is precisely the aforementioned Schur-Weyl duality. There are many more examples that can be found throughout the literature, such as Pasquier's construction for simply laced Dynkin diagrams \cite{Pasquier1987a}, when the fusion category is the category of representations of a Drinfeld double of a finite group \cite{Gould1993} or certain quantum groups \cite{BatKun1991,KobLim1996,Reshet1988,Zhang1991}. However, each of these results are restricted in some form such as a restriction to a specific category, a restriction to sequence of only one type of object or a restriction to only considering projection operators onto the trivial object. The above results are more general as they allow non-uniform sequences of seed objects. In the study of integrable systems one sees that it allows the consideration of certain chains of alternating particles or chains with inhomogeneities.

Along side Result 1 we presented two additional results which are seemingly weaker. However, the author is not aware of any examples of semi-simple tensor categories that show that this is definitely the case. Thus we are lead to the following open problems.

\begin{center} \begin{minipage}{0.9\textwidth}
\vspace{0.2cm}
\begin{op} \label{op12}
Are there any multiplicity free semi-simple tensor categories where the conditions of Result 1 are met but the conditions of Result 2 are not? An alternative more precise version is given in the next section.
\end{op}

\begin{op} \label{op23}
Are there any multiplicity free semi-simple tensor categories where there exists simple objects $\lambda,\nu$ such that $\lambda\tp\nu$ is simple but neither $\lambda$ nor $\nu$ are one-dimensional (excluding trivial cases and cases which rely on one-dimensional objects in a disguised manner)?
\end{op}
\vspace{0.2cm}
\end{minipage} \end{center}

\noindent
Proving an answer to the first open problem would determine if knowledge of the fusion rules of the algebra provides the sufficient and necessary information to determine if a two-site projection operator satisfies Temperley-Lieb like relations. For the second open problem, we see that if the answer is no then one may consider Results 2 and 3 to be equivalent after some additional constraints. We remark in the second open problem we have excluded certain cases in a fairly vague manner. This is exclusion is to avoid cases which are deemed uninteresting and is discussed in more detail in the next section.

The Birman-Murakami-Wenzl algebra \cite{BirWen1989,Muraka1987} includes a sub-algebra isomorphic to the Temperley-Lieb algebra. As such it is possible to use the results for the Temperley-Lieb like relations as a foothold in finding operators that satisfy BMW like relations. In particular we find sufficient conditions for non-trivial operators to satisfy BMW like relations in a semi-simple tensor category which is also braided.

\begin{center} \begin{minipage}{0.9\textwidth}
\vspace{0.2cm}
\begin{result}
Consider a multiplicity free braided semi-simple tensor system and a seed simple object, $\lambda$. If $\lambda\tp\lambda\cong\nu\oplus\cdots$ and $\lambda\tp\nu$ is simple then it is possible construct operators such that all bar one of the BMW relations are satisfied. The final BMW relation can only be satisfied when the braiding operator has at most three distinct eigenvalues which satisfy a certain condition.
\end{result}
\vspace{0.2cm}
\end{minipage} \end{center}

\noindent
The results stated above, and proven in the next section, all require that the semi-simple tensor category is multiplicity free. However, there are many examples of semi-simple tensor categories which are not. It would be of interest to determine if the above results extend to these cases.

\begin{center} \begin{minipage}{0.9\textwidth}
\vspace{0.2cm}
\begin{op} \label{opWithMult}
Do these results extend to semi-simple tensor systems that are not multiplicity free?
\end{op}
\vspace{0.2cm}
\end{minipage} \end{center}

\noindent
The last open question considered concerns generalising the approach to find representations of the Hecke algebra.

\begin{center} \begin{minipage}{0.9\textwidth}
\vspace{0.2cm}
\begin{op} \label{opWithMult}
Can one find analogous sufficient or necessary conditions on object in a semi-simple tensor category that leads to projection operators satisfying the Hecke relations?
\end{op}
\vspace{0.2cm}
\end{minipage} \end{center}

\noindent
We argue this is a natural question to ask because the Temperley-Lieb algebra is a quotient of the Hecke algebra.

As stated previously, all results presented are expressed in terms of semi-simple tensor categories where the operators act on Hilbert spaces spanned by basis vectors given by fusion paths. Such Hilbert spaces are often associated with interaction round-a-face models and anyon chains \cite{GRS1996,Finch2013}. In the case when the semi-simple tensor category is also a category of matrix representations of a semi-simple Hopf algebra one can apply Pasquier's face-vertex correspondence to the operators \cite{Pasqui1988}. Thus there will be analogous operators given in terms Hilbert space associated with the matrix representations that satisfy equivalent equations. This second basis is often associated with vertex models and spin chains.

\subsection{Definitions}
Instead of using diagrammatic calculus to prove results we use the combinatorial datum and structure constants of the categories. Throughout $\textbf{R}$ will be a commutative ring with identity. Also, we will only consider semi-simple tensor categories which are multiplicity free. As such the reader should assume that every semi-simple tensor category is multiplicity free unless specifically stated otherwise.\footnote{Generally $N_{ab}^{c}\in \{0,1,2,\dots\}$, being multiplicity free requires that $N_{ab}^{c}\in \{0,1\}$. This also has implications on how the F-moves are presented \cite{BonderThesis2007}}
\begin{dfn}
A \textit{(multiplicity free) semi-simple tensor system} consists of a set $\mathcal{I}$, as well as constants $N_{ab}^{c}\in \{0,1\}$ and $\left(F^{abc}_{d}\right)^{e}_{f},\left(\bar{F}^{abc}_{d}\right)^{e}_{f}\in \textbf{R}$ for $a,b,c,d,e,f\in\mathcal{I}$. The $N$s satisfy the properties:
\begin{enumerate}
	\item [N.1] \textit{Associativity}: $\sum_{e\in\mathcal{I}}N_{ab}^{e}N_{ec}^{d}=\sum_{e\in\mathcal{I}}N_{ae}^{d}N_{bc}^{e}$ $\forall a,b,c,d\in\mathcal{I}$,
	\item [N.2] \textit{Finiteness}: $\sum_{c\in\mathcal{I}}N_{ab}^{c} < \infty$ $\forall a,b\in\mathcal{I}$.
\end{enumerate}
The F-moves satisfy the relations:
\begin{enumerate}
	\item [F.1] \textit{Zero Condition}: $N_{ab}^{e}N_{bc}^{f}N_{af}^{d}N_{ec}^{d} = 0$ implies $(F^{abc}_{d})^{e}_{f}= (\bar{F}^{abc}_{d})^{f}_{e} = 0$  $\forall a,b,c,d,e,f\in \mathcal{I}$,
	\item [F.2]\textit{Pentagon Relation}: $\sum_{h\in\mathcal{I}} \left(F^{abc}_{g}\right)^{f}_{h} \left(F^{ahd}_{e}\right)^{g}_{k} \left(F^{bcd}_{k}\right)^{h}_{l} = \left(F^{fcd}_{e}\right)^{g}_{l} \left(F^{abl}_{e}\right)^{f}_{k}$ $\forall a,b,c,d,e,f,g,k\in \mathcal{I}$,
	\item [F.3] \textit{Left Inverse}: $\sum_{f\in\mathcal{I}} (F^{abc}_{d})^{e}_{f} (\bar{F}^{abc}_{d})^{f}_{e'} = \delta_{e}^{e'} (1-\delta_{N_{ab}^{e}}^{0})(1-\delta_{N_{ec}^{d}}^{0})$ $\forall a,b,c,d,e,e'\in \mathcal{I}$,
	\item [F.4] \textit{Right Inverse}: $\sum_{e\in\mathcal{I}} (\bar{F}^{abc}_{d})^{f'}_{e} (F^{abc}_{d})^{e}_{f} = \delta_{f}^{f'} (1-\delta_{N_{bc}^{f}}^{0})(1-\delta_{N_{af}^{d}}^{0})$ $\forall a,b,c,d,f,f'\in \mathcal{I}$.
\end{enumerate}
\end{dfn}
\noindent
Using the associativity axiom we are to extend the definition of the structure constants $N$
\begin{align*}
  N_{a_{0}\cdots a_{n}}^{b_{n}} & = \sum_{b_{1},\dots,b_{n-1}\in\mathcal{I}} N_{a_{0}a_{1}}^{b_{1}} \cdots N_{a_{n-1}a_{n}}^{b_{n}}. 
\end{align*}
We remark that our definition of a semi-simple tensor system presented here differs from other author's, specifically we have not required the existence of an identity or duals \cite{DaHaWa2013}. A semi-simple tensor system with such objects is defined in the follow way.
\begin{dfn}
A \textit{semi-simple tensor system} with identity and duals is a semi-simple tensor system that has a distinguished element, $1\in\mathcal{I}$, and an involution, $\iota$, satisfying:
\begin{enumerate}
	\item [N.3] \textit{Identity}: $N_{a1}^{b}=N_{1a}^{b}=\delta_{b}^{a}$ $\forall a,b\in\mathcal{I}$,
	\item [N.4] \textit{Duals}: $N_{ab}^{1}=N_{ba}^{1}=\delta_{b}^{\iota(a)}$ $\forall a,b\in\mathcal{I}$.
\end{enumerate}
\end{dfn}
\noindent
To connect tensor systems to categories we first must recognise there are different definitions of tensor (and related) categories, for example see \cite{DaHaWa2013,EGNO2009}. In this article tensor category means $\textbf{R}$-linear abelian monoidal category. It follows that given a semi-simple tensor system with identity and duals it is always possible to construct a semi-simple rigid tensor category with unit object \cite{Yamaga2002}. 
The category will consist of a set of simple objects $\mathcal{I}$ with the fusion of simple objects $a$ and $b$ given by
\begin{align*}
  a \tp b & \cong  \bigoplus_{c} N_{ab}^{c} c.
\end{align*}
Axiom N.1 implies that tensoring simple objects is associative, i.e.
\begin{align*}
  (a \tp b) \tp c \cong & a \tp (b \tp c),
\end{align*}
which is governed by the F-moves (alternatively known as generalised 6-j symbols). Associativity is naturally extended to all objects within the category.

As we are only dealing with multiplicity free systems we note
\begin{align*}
	(1-\delta_{N_{ab}^{c}}^{0}) & = N_{ab}^{c}, \\
	N_{ab}^{e}N_{bc}^{f}N_{af}^{d}N_{ef}^{d} \left(F^{abc}_{d}\right)^{e}_{f} & = \left(F^{abc}_{d}\right)^{e}_{f}, \\
	N_{ab}^{e}N_{bc}^{f}N_{af}^{d}N_{ef}^{d} \left(\bar{F}^{abc}_{d}\right)^{f}_{e} & = \left(\bar{F}^{abc}_{d}\right)^{f}_{e},
\end{align*}
$\forall a,b,c,d,e,f\in \mathcal{I}$. We also define equivalences between different semi-simple tensor systems. 
\begin{dfn}
Two semi-simple tensor systems, $(\mathcal{I},N,F,\bar{F})$ and $(\tilde{\mathcal{I}},\tilde{N},\tilde{F},\bar{\tilde{F}})$, are \textit{gauge equivalent} if $\mathcal{I}=\bar{\mathcal{I}}$,
\begin{align*}
	N_{ab}^{c} & = \tilde{N}_{ab}^{c} & \forall a,b,c\in\mathcal{I},
\end{align*}
and there exist \textit{gauge constants} $u^{ab}_{c}\in \textbf{R}^{\times}$ for $a,b,c\in\mathcal{I}$ such that
\begin{align*}
	\left(F^{abc}_{d}\right)^{e}_{f} & = \frac{u^{af}_{d}u^{bc}_{f}}{u^{ab}_{e}u^{ec}_{d}} \left(\tilde{F}^{abc}_{d}\right)^{e}_{f}, & \forall a,b,c,d,e,f\in\mathcal{I}\\
	\left(\bar{F}^{abc}_{d}\right)^{f}_{e} & = \frac{u^{ab}_{e}u^{ec}_{d}}{u^{af}_{d}u^{bc}_{f}} \left(\bar{\tilde{F}}^{abc}_{d}\right)^{f}_{e} & \forall a,b,c,d,e,f\in\mathcal{I}.
\end{align*}
\end{dfn}
\noindent
A quantity, associated with a semi-simple tensor system, is said to be gauge invariant if it is equal in all gauge equivalent semi-simple tensor systems, for example we find that
\begin{align*}
	\left(F^{aaa}_{d}\right)^{e}_{e},
\end{align*}
is gauge invariant.

\ifPrivateMode
One can also define the direct product of two semi-simple tensor systems.
\begin{dfn}
The Cartesian product of two semi-simple tensor systems, with data $(\tilde{\mathcal{I}},\tilde{N},\tilde{F},\bar{\tilde{F}})$ and $(\mathcal{\check{I}},\check{N},\check{F},\bar{\check{F}})$, consists of the data $(\mathcal{I},N,F,\bar{F})$ where $\mathcal{I}=\tilde{\mathcal{I}}\times\check{\mathcal{I}}$ and
\begin{align*}
  I & = \{(a,a')|a\in\tilde{\mathcal{I}}, \, a'\in \check{\mathcal{I}} \} \\
  N_{(a,a')(b,b')}^{(c,c')} & = \tilde{N}_{ab}^{c}\check{N}_{a'b'}^{c'} & \forall a,b,c\in\tilde{\mathcal{I}}, \forall a',b',c'\in\check{\mathcal{I}}  \\
  \left(F^{(a,a')(b,b')(c,c')}_{(d,d')}\right)^{(e,e')}_{(f,f')} & = \left(\tilde{F}^{abc}_{d}\right)^{e}_{f} \left(\check{F}^{a'b'c'}_{d'}\right)^{e'}_{f'} & \forall a,b,c,d,e,f\in\tilde{\mathcal{I}}, \forall a',b',c',d',e',f'\in\check{\mathcal{I}} \\
  \left(\bar{F}^{(a,a')(b,b')(c,c')}_{(d,d')}\right)^{(e,e')}_{(f,f')} & = \left(\tilde{\bar{F}}^{abc}_{d}\right)^{e}_{f} \left(\check{\bar{F}}^{a'b'c'}_{d'}\right)^{e'}_{f'} & \forall a,b,c,d,e,f\in\tilde{\mathcal{I}}, \forall a',b',c',d',e',f'\in\check{\mathcal{I}} 
\end{align*}
\end{dfn}
\fi

We also define the braided version of semi-simple tensor system.
\begin{dfn}
A \textit{braided semi-simple tensor system} is semi-simple tensor system that has also has the property that $N_{ab}^{c} = N_{ba}^{c}$ $\forall a,b,c \in \mathcal{I}$ and there exists constants $R^{ab}_{c},\bar{R}^{ab}_{c}\in\textbf{R}$ satisfying
\begin{enumerate}
	\item [R.1] \textit{Zero Condition}: $N_{ab}^{c}=0$ implies $R^{ab}_{c}=\bar{R}^{ab}_{c}=0$ $\forall a,b,c\in\mathcal{I}$,
	\item [R.2] \textit{Hexagon Relation one}: $R^{ac}_{e}\left(F^{acb}_{d}\right)^{e}_{g} R^{bc}_{g}
=\sum_{f\in\mathcal{I}} \left(F^{cab}_{d}\right)^{e}_{f} R^{fc}_{d} \left(F^{abc}_{d}\right)^{f}_{g}$ $\forall a,b,c,d,e,g\in \mathcal{I}$,
	\item [R.3] \textit{Hexagon Relation two}: $\bar{R}^{ac}_{e}\left(F^{acb}_{d}\right)^{e}_{g} \bar{R}^{bc}_{g}
=\sum_{f\in\mathcal{I}} \left(F^{cab}_{d}\right)^{e}_{f} \bar{R}^{fc}_{d} \left(F^{abc}_{d}\right)^{f}_{g}$ $\forall a,b,c,d,e,g\in \mathcal{I}$,
	\item [R.4] \textit{Inverse}: $R^{ab}_{c}\bar{R}^{ba}_{c} = (1-\delta_{N_{ab}^{c}}^{0})$ $\forall a,b,c\in\mathcal{I}$.
\end{enumerate}
\end{dfn}
\noindent
The definition of gauge equivalent can be extended to the braided versions.
\begin{dfn}
Two braided semi-simple tensor systems, $(\mathcal{I},N,F,\bar{F},R,\bar{R})$ and $(\bar{\mathcal{I}},\tilde{N},\tilde{F},\bar{\tilde{F}},\tilde{R},\bar{\tilde{R}})$, are \textit{gauge equivalent} if they are gauge equivalent as semi-simple tensor systems with the gauge constants $u^{ab}_{c}\in \textbf{R}^{\times}$ for $a,b,c\in\mathcal{I}$ that also satisfy
\begin{align*}
	R^{ab}_{c} & = \frac{u^{ba}_{c}}{u^{ab}_{c}} \tilde{R}^{ab}_{c}, \\
	\bar{R}^{ab}_{c} & = \frac{u^{ab}_{c}}{u^{ba}_{c}}  \bar{\tilde{R}}^{ab}_{c}. 
\end{align*}
\end{dfn}
\noindent
The definition of gauge invariant is extended to the braided semi-simple tensor system.

The Temperley-Lieb and BMW like relations we will derive will be between operators that act on a Hilbert space, $\mathcal{H}_{\tilde{\lambda}}$, where $\tilde{\lambda}=(\lambda_{1},...,\lambda_{L})$ is an ordered $L$-tuple with $\lambda_{i}\in\mathcal{I}$. The Hilbert space is defined in the following way
\begin{align*}
	\mathcal{B}_{\tilde{\lambda}} & = \{\mu_{0}\mu_{1}...\mu_{ L} | N_{\mu_{i-1}\lambda_{i}}^{\mu_{i}}=1, \, 1 \leq i \leq L, \, \& \, \mu_{0}\in \mathcal{I}_{0}\}, \\
	\mathcal{H}_{\tilde{\lambda}} & = \textbf{R} \left\{ \ket{\mu} | \mu \in \mathcal{B}_{\tilde{\lambda}} \right\},
\end{align*}
for a fixed finite subset $\mathcal{I}_{0} \subseteq \mathcal{I}$. While $\mathcal{B}_{\tilde{\lambda}}$ and vector space $\mathcal{H}_{\tilde{\lambda}}$ both depend on $\mathcal{I}_{0}$, our results do not. It is because of this we suppress any notational indication of the dependence on $\mathcal{I}_{0}$. We have adopted the short hand that $\ket{\mu} \equiv \ket{\mu_{0}\mu_{1}...\mu_{ L}}$. As we require $\mathcal{I}_{0}$ to be finite axiom N.1 ensures that $\mathcal{H}_{\tilde{\lambda}}$ is finite-dimensional. The basis vectors of the dual space of $\mathcal{H}_{\tilde{\lambda}}$ are given by $\bra{\mu} \equiv \bra{\mu_{0}\mu_{1}...\mu_{ L}}$ and are orthogonal to the original basis vectors,
\begin{align*}
	\bra{\mu'} \ket{\mu} & = \prod_{i=0}^{L} \delta_{\mu_{i}}^{\mu_{i}'}.
\end{align*}
We follow the usual convection when defining an outer product. This Hilbert space and its basis is the same used for interaction-round-a-face models \cite{AnBaFo1984,Pasqui1988}.

We define two-site projection operators, 
\begin{align} \label{eqndfnProjOp}
	\bra{\mu'} p^{(\nu)}_{i} \ket{\mu} 
	& = \left[ \prod_{j\neq i} \delta_{\mu_{j}}^{\mu_{j}'} \right] \left(\bar{F}^{\mu_{i-1}\lambda_{i}\lambda_{i+1}}_{\mu_{i+1}}\right)_{\mu_{i}'}^{\nu} \left(F^{\mu_{i-1}\lambda_{i}\lambda_{i+1}}_{\mu_{i+1}}\right)^{\mu_{i}}_{\nu}  & 1\leq i\leq L-1.
\end{align}
It follows from the axioms of the semi-simple tensor systems that they are local, orthogonal and idempotent:
\begin{align*}
	p^{(\nu)}_{i} p^{(\nu')}_{j} & = p^{(\nu')}_{j} p^{(\nu)}_{i} & 1\leq i \leq j+2 \leq L-1, \\
	p^{(\nu)}_{i} p^{(\nu')}_{i} & = \delta_{\nu}^{\nu'} p^{(\nu)}_{i} & 1\leq i\leq L-1, \\
	\sum_{\nu\in\mathcal{I}}p^{(\nu)}_{i} & = I & 1\leq i\leq L-1.
\end{align*}

\section{Determining relations}
\subsection{Temperley-Lieb like relations}

\noindent
We start by show that projection operators satisfy Temperley-Lieb like relations if and only if there exists a relatively simple relationship between a gauge invariant quantities as stated in Result \ref{ResGenTL}.
\begin{thm}\label{ThmGenTLlikeRel}
Consider a semi-simple tensor system with indexing set $\mathcal{I}$ along with the elements $\nu,\nu'\in\mathcal{I}$ and the tuple $\tilde{\lambda}=(\lambda_{1},...,\lambda_{ L})$ where $\lambda_{i}\in\mathcal{I}$.

There exists $c_{1}\in\textbf{R}$ with the property that
\begin{align}
	c_{1} N_{\lambda_{i}\lambda_{i+1}}^{\nu} N_{\nu\lambda_{i+2}}^{\upsilon_{1}} N_{\upsilon_{2}\upsilon_{1}}^{\upsilon_{3}}
	& = N_{\upsilon_{2}\upsilon_{1}}^{\upsilon_{3}} \left(F^{\lambda_{i}\lambda_{i+1}\lambda_{i+2}}_{\upsilon_{1}}\right)^{\nu}_{\nu'} 	\left(\bar{F}^{\lambda_{i}\lambda_{i+1}\lambda_{i+2}}_{\upsilon_{1}}\right)^{\nu'}_{\nu}, \label{eqncFFone}
\end{align}
for all $\upsilon_{1},\upsilon_{2},\upsilon_{3}\in\mathcal{I}$ where $\sum_{\mu_{0}\in\mathcal{I}_{0}} N_{u_{0}\lambda_{1}\cdots\lambda_{i-1}}^{\upsilon_{2}} \geq 1$ if and only if there exists $c_{2}\in\textbf{R}$ with the property that
\begin{align*}
	p_{i}^{(\nu)}p_{i+1}^{(\nu')}p_{i}^{(\nu)} & = c_{2} p_{i}^{(\nu)}
\end{align*}
as operators acting on the space $\mathcal{H}_{\tilde{\lambda}}$. If such $c_{1}$ and $c_{2}$ exist it is always possible to set $c_{1}=c_{2}$. 

There exists $c_{1}\in\textbf{R}$ with the property that
\begin{align}
	c_{1} N_{\lambda_{i}\lambda_{i+1}}^{\nu} N_{\lambda_{i-1}\nu}^{\upsilon_{1}} N_{\upsilon_{2}\upsilon_{1}}^{\upsilon_{3}}  
	& = N_{\upsilon_{2}\upsilon_{1}}^{\upsilon_{3}} \left(\bar{F}^{\lambda_{i-1}\lambda_{i}\lambda_{i+1}}_{\upsilon_{1}}\right)^{\nu}_{\nu'} 		\left(F^{\lambda_{i-1}\lambda_{i}\lambda_{i+1}}_{\upsilon_{1}}\right)^{\nu'}_{\nu} 	\label{eqncFFtwo}
\end{align}
for all $\upsilon_{1},\upsilon_{2},\upsilon_{3}\in\mathcal{I}$ where $\sum_{\mu_{0}\in\mathcal{I}_{0}} N_{u_{0}\lambda_{1}\cdots\lambda_{i-2}}^{\upsilon_{2}} \geq 1$ if and only if there exists $c_{2}\in\textbf{R}$ with the property that
\begin{align*}
	p_{i}^{(\nu)}p_{i-1}^{(\nu')}p_{i}^{(\nu)} & = c_{2} p_{i}^{(\nu)}
\end{align*}
as operators acting on the space $\mathcal{H}_{\tilde{\lambda}}$. If such $c_{1}$ and $c_{2}$ exist it is always possible to set $c_{1}=c_{2}$.
\end{thm}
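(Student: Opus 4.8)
The plan is to evaluate the three matrix elements of $p_i^{(\nu)}p_{i-1}^{(\nu')}p_i^{(\nu)}$ directly in the fusion-path basis and to compare them with those of $c_2 p_i^{(\nu)}$. Inserting complete sets of basis vectors between the three factors and using locality, every internal label except the value $a$ produced at site $i$ by the right-hand projector is frozen by Kronecker deltas, while the site-$(i-1)$ value is forced to equal its input $\mu_{i-1}$ after the right projector and its output $\mu_{i-1}'$ after the middle one. A short bookkeeping with \eqref{eqndfnProjOp} then gives
\begin{align*}
	\bra{\mu'}p_i^{(\nu)}p_{i-1}^{(\nu')}p_i^{(\nu)}\ket{\mu}
	&= \Big[\prod_{j\neq i-1,i}\delta_{\mu_j}^{\mu_j'}\Big]
	\left(\bar F^{\mu_{i-1}'\lambda_i\lambda_{i+1}}_{\mu_{i+1}}\right)^\nu_{\mu_i'}
	\left(F^{\mu_{i-1}\lambda_i\lambda_{i+1}}_{\mu_{i+1}}\right)^{\mu_i}_\nu\, S,
\end{align*}
where
\begin{align*}
	S &= \sum_{a\in\mathcal I}
	\left(\bar F^{\mu_{i-1}\lambda_i\lambda_{i+1}}_{\mu_{i+1}}\right)^\nu_a
	\left(F^{\mu_{i-2}\lambda_{i-1}\lambda_i}_a\right)^{\mu_{i-1}}_{\nu'}
	\left(\bar F^{\mu_{i-2}\lambda_{i-1}\lambda_i}_a\right)^{\nu'}_{\mu_{i-1}'}
	\left(F^{\mu_{i-1}'\lambda_i\lambda_{i+1}}_{\mu_{i+1}}\right)^a_\nu.
\end{align*}
The matrix element of $c_2 p_i^{(\nu)}$ differs only in that it additionally carries the factor $\delta_{\mu_{i-1}}^{\mu_{i-1}'}$; so the entire content of the statement is to determine when the $a$-sum forces this diagonal behaviour and reproduces a single constant.

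The key idea is to pass to the coupling scheme in which $\lambda_{i-1},\lambda_i,\lambda_{i+1}$ are fused first, to a total charge $\upsilon_1$, with $\mu_{i-2}$ a spectator. I would use the pentagon relation (F.2) together with the inverse relations (F.3)--(F.4) to re-express the four $F$-moves built on $\mu_{i-2}$, $\mu_{i-1}$ and $\mu_{i-1}'$ as a single change of coupling scheme summed over $\upsilon_1$. In operator language this says precisely that both $p_i^{(\nu)}$ and $p_{i-1}^{(\nu')}$ commute with the grading of $\mathcal H_{\tilde\lambda}$ by the charge $\upsilon_1$ of the triple $\lambda_{i-1}\tp\lambda_i\tp\lambda_{i+1}$ (both are diagonal in the respective coupling trees that carry $\upsilon_1$ as a label), and that inside each block they restrict to the two projectors of the three-object space $\mathrm{Hom}(\lambda_{i-1}\tp\lambda_i\tp\lambda_{i+1},\upsilon_1)$ onto $(\lambda_i\lambda_{i+1})=\nu$ and $(\lambda_{i-1}\lambda_i)=\nu'$, each of rank at most one. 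Writing $P^{(\upsilon_1)}$ for the $\upsilon_1$-refinement of $p_i^{(\nu)}$, so that $p_i^{(\nu)}=\sum_{\upsilon_1}P^{(\upsilon_1)}$, the computation collapses to the one-dimensional identity
\begin{align*}
	P^{(\upsilon_1)}\,p_{i-1}^{(\nu')}\,P^{(\upsilon_1)}
	&= \left(\bar F^{\lambda_{i-1}\lambda_i\lambda_{i+1}}_{\upsilon_1}\right)^\nu_{\nu'}
	\left(F^{\lambda_{i-1}\lambda_i\lambda_{i+1}}_{\upsilon_1}\right)^{\nu'}_\nu\,P^{(\upsilon_1)},
\end{align*}
which is immediate from (F.3)--(F.4) once the block has been isolated, since the product of two projectors of rank at most one sharing a line is their overlap times the first projector.

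Summing over $\upsilon_1$ and using block diagonality to kill the cross terms then yields $p_i^{(\nu)}p_{i-1}^{(\nu')}p_i^{(\nu)}=\sum_{\upsilon_1}c(\upsilon_1)P^{(\upsilon_1)}$ with $c(\upsilon_1)=(\bar F^{\lambda_{i-1}\lambda_i\lambda_{i+1}}_{\upsilon_1})^\nu_{\nu'}(F^{\lambda_{i-1}\lambda_i\lambda_{i+1}}_{\upsilon_1})^{\nu'}_\nu$, whereas $c_2 p_i^{(\nu)}=c_2\sum_{\upsilon_1}P^{(\upsilon_1)}$. As distinct $\upsilon_1$-blocks are orthogonal, these two operators agree if and only if $c(\upsilon_1)=c_2$ for every $\upsilon_1$ whose block acts nontrivially on $\mathcal H_{\tilde\lambda}$. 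Such a block is present exactly when $N_{\lambda_i\lambda_{i+1}}^{\nu}N_{\lambda_{i-1}\nu}^{\upsilon_1}=1$ and the charge $\upsilon_1$ is realised over some reachable $\mu_{i-2}$, that is, there exist $\upsilon_2,\upsilon_3$ with $N_{\upsilon_2\upsilon_1}^{\upsilon_3}=1$ and $\sum_{\mu_0\in\mathcal I_0}N_{\mu_0\lambda_1\cdots\lambda_{i-2}}^{\upsilon_2}\ge 1$; on the complementary index tuples both sides of \eqref{eqncFFtwo} vanish by the zero condition (F.1), so the quantified identity is just the faithful restatement of ``$c(\upsilon_1)$ constant on present blocks.'' This is exactly \eqref{eqncFFtwo} with the common value $c_1=c_2$, and it proves both implications simultaneously.

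The main obstacle is the second paragraph: carrying out, purely from the axioms F.1--F.4 and without diagrams, the pentagon recoupling that simultaneously block-diagonalises $p_i^{(\nu)}$ and $p_{i-1}^{(\nu')}$ in the variable $\upsilon_1$ and collapses the internal sum $S$ onto the single $F$-move on $\lambda_{i-1}\lambda_i\lambda_{i+1}$. The delicate point is that $p_i^{(\nu)}$ is diagonal in the site label $\mu_{i-1}$, whereas $p_i^{(\nu)}p_{i-1}^{(\nu')}p_i^{(\nu)}$ is naturally diagonal in $\upsilon_1$, and these two gradings do not refine one another; the apparently off-diagonal terms in $\mu_{i-1}$ must be shown to reorganise, and it is precisely the constancy of $c(\upsilon_1)$ that forces them to recombine into the $\mu_{i-1}$-diagonal operator $c_2 p_i^{(\nu)}$. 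A secondary, purely combinatorial obstacle is matching ``block present on $\mathcal H_{\tilde\lambda}$'' with the stated $N$-quantifiers, in particular tracking the role of the finite seed set $\mathcal I_0$ through the reachability sum $\sum_{\mu_0\in\mathcal I_0}N_{\mu_0\lambda_1\cdots\lambda_{i-2}}^{\upsilon_2}$.
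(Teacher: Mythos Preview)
Your strategy is correct and coincides with the paper's own approach. The paper (Calculations~A.1.2 and~A.2.2 for the $i-1$ case you treat, and A.1.1, A.2.1 for the $i+1$ case) does exactly what you describe: it inserts complete sets, obtains your sum $S$ over the intermediate label $a$, and then applies two specific instances of the pentagon relation together with one $F\bar F$ inversion to trade the sum over $a$ for a sum over the total triple charge $\upsilon_1$, arriving at
\[
\bra{\mu'}p_i^{(\nu)}p_{i-1}^{(\nu')}p_i^{(\nu)}\ket{\mu}
=\sum_{\upsilon_1}\big(\bar F^{\lambda_{i-1}\lambda_i\lambda_{i+1}}_{\upsilon_1}\big)^{\nu}_{\nu'}\big(F^{\lambda_{i-1}\lambda_i\lambda_{i+1}}_{\upsilon_1}\big)^{\nu'}_{\nu}\,
(\text{change-of-basis $F$'s in }\mu_{i-1},\mu_{i-1}',\upsilon_1)\times(\text{$p_i^{(\nu)}$-factors}),
\]
which is precisely your block decomposition $\sum_{\upsilon_1}c(\upsilon_1)P^{(\upsilon_1)}$. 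The remaining change-of-basis factors are $\big(F^{\mu_{i-2}\lambda_{i-1}\nu}_{\mu_{i+1}}\big)^{\mu_{i-1}}_{\upsilon_1}\big(\bar F^{\mu_{i-2}\lambda_{i-1}\nu}_{\mu_{i+1}}\big)^{\upsilon_1}_{\mu_{i-1}'}$, and their $\upsilon_1$-sum collapses to $\delta_{\mu_{i-1}}^{\mu_{i-1}'}$ by F.3 exactly when $c(\upsilon_1)$ is constant---confirming your ``delicate point'' about the two incompatible gradings reconciling only under constancy.

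The only difference is presentational: you phrase the recoupling conceptually (grading by the triple charge, rank-one projector overlap), while the paper supplies the three explicit axiom instances that execute it. What you flag as the main obstacle is thus not an obstruction but a bookkeeping exercise; the paper's Calculation~A.1.2 records the needed identities if you want them written out. Your reachability analysis for which $\upsilon_1$-blocks are present on $\mathcal H_{\tilde\lambda}$ also matches the paper's treatment of the $N$-quantifiers.
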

\begin{proof}
The proof of this proposition can be found in Section \ref{SecProofGenTLlikeRel} (Calculations \ref{calcTLlikeRel1} and \ref{calcTLlikeRel2}).
\end{proof}

\noindent
This gives sufficient and necessary condition for the projection operator to satisfy Temperley-Lieb like relations corresponding to a constraint on a set of gauge invariant quantities. This is can be rewritten to coincide with Result 1 presented in Outline section.

Unfortunately the above Theorem does not provide much insight into cases where one find projection operators satisfying Temperley-Lieb like relations. It is useful to consider a sufficient constraint purely on the coefficients $N_{ab}^{c}$. To accomplish this we require the definition of being acting one-dimensionally. 
\begin{dfn}
An element $\nu\in \mathcal{I}$ is said act \textit{one-dimensionally on the left of} $\mu\in \mathcal{I}$ if
\begin{align*}
	\sum_{\mu'\in I} N_{\nu\mu}^{\mu'} & = 1.
\end{align*}
Similarly, $\nu\in \mathcal{I}$ is said acts \textit{one-dimensionally on the right of} $\mu\in \mathcal{I}$ if
\begin{align*}
	\sum_{\mu'\in I} N_{\mu\nu}^{\mu'} & = 1.
\end{align*}
Finally, if $\nu\in \mathcal{I}$ acts one-dimensionally on the left and right of an element then it is said that $\nu$ acts one-dimensionally on that element.
\end{dfn}

\noindent
In terms of objects in the category we see that $a\tp b$ is simple if and only if $a$ acts one-dimensionally on the right of $b$, which is equivalent to stating $b$ acts one-dimensionally on the left of $a$.

\begin{dfn}
For $\nu\in\mathcal{I}$ we define the sets
\begin{align*}
	\mathcal{I}^{l}_{\nu} & = \{\mu\in\mathcal{I}| \,\nu\,\, \mbox{acts one-dimensionally on the left of} \,\,\mu\, \}, \\
	\mathcal{I}^{r}_{\nu} & = \{\mu\in\mathcal{I}| \,\nu\,\, \mbox{acts one-dimensionally on the right of} \,\,\mu\, \},
\end{align*}
along with the maps
\begin{align*}
	\phi^{l}_{\nu} : \mathcal{I}^{l}_{\nu} \rightarrow \mathcal{I} & \hspace{0.4cm} \mbox{s.t.} \hspace{0.4cm} N_{\nu\mu}^{\mu'} = \delta_{\phi^{l}_{\nu}(\mu)}^{\mu'}, \\
	\phi^{r}_{\nu} : \mathcal{I}^{r}_{\nu} \rightarrow \mathcal{I} & \hspace{0.4cm} \mbox{s.t.} \hspace{0.4cm} N_{\mu\nu}^{\mu'} = \delta_{\phi^{r}_{\nu}(\mu)}^{\mu'}.
\end{align*}
\end{dfn}

\begin{dfn}
An element $\nu\in \mathcal{I}$ is said to be \textit{one-dimensional} if $\mathcal{I}=\mathcal{I}^{l}_{\nu}=\mathcal{I}^{r}_{\nu}$.
\end{dfn}
\noindent
The reason the element is called one-dimensional is because in a semi-simple tensor system with identity and duals that also has a finite number of simple objects the (non-trivial) quantum dimension of the corresponding object in the must have magnitude one. Furthermore, if it is also a category of irreducible finite-dimensional representations of a semi-simple algebra then the dimension of the corresponding representation will be precisely one. It should be noted that in conformal field theory such objects are called simple currents.

The maps $\phi^{l}_{\nu}$ and $\phi^{r}_{\nu}$ allows us to perform some interesting calculations and find the relationships between projection operators involving elements that act one-dimensionally.

\begin{cor} \label{CorOneDProjOpRel}
Consider a semi-simple tensor system with indexing set $\mathcal{I}$ along with the tuple $\tilde{\lambda}=(\lambda_{1},...,\lambda_{ L})$ where $\nu,\lambda_{i}\in\mathcal{I}$. For a fixed $1 \leq i \leq L-2$, if $\nu$ acts \underline{one-dimensionally} on the \underline{left} of $\lambda_{i+2}$ then as operators acting on the Hilbert space $\mathcal{H}_{\tilde{\lambda}}$,
\begin{align*}
	p^{(\nu)}_{i}p^{(\nu')}_{i+1}p^{(\nu)}_{i} 
	& = \left(F^{\lambda_{i}\lambda_{i+1}\lambda_{i+2}}_{\phi^{l}_{\nu}(\lambda_{i+2})}\right)^{\nu}_{\nu'} \left(\bar{F}^{\lambda_{i}\lambda_{i+1}\lambda_{i+2}}_{\phi^{l}_{\nu}(\lambda_{i+2})}\right)^{\nu'}_{\nu} p^{(\nu)}_{i}, 
	& \forall v'\in\mathcal{I}.
\end{align*}
Similarly, for a fixed $2 \leq i \leq L-1$, if $\nu$ acts \underline{one-dimensionally} on the \underline{right} of $\lambda_{i-1}$ then as operators acting on the Hilbert space $\mathcal{H}_{\tilde{\lambda}}$,
\begin{align*}
	p^{(\nu)}_{i}p^{(\nu')}_{i-1}p^{(\nu)}_{i} 
	& = \left(F^{\lambda_{i-1}\lambda_{i}\lambda_{i+1}}_{\phi^{r}_{\nu}(\lambda_{i-1})}\right)^{\nu'}_{\nu} \left(\bar{F}^{\lambda_{i-1}\lambda_{i}\lambda_{i+1}}_{\phi^{r}_{\nu}(\lambda_{i-1})}\right)^{\nu}_{\nu'} p^{(\nu)}_{i}, 
	& \forall v'\in\mathcal{I}.
\end{align*}
\end{cor}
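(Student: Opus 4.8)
The plan is to deduce this corollary directly from Theorem~\ref{ThmGenTLlikeRel}, which already reduces the operator identity $p_i^{(\nu)}p_{i+1}^{(\nu')}p_i^{(\nu)} = c_2\, p_i^{(\nu)}$ to the existence of a constant $c_1$ satisfying the gauge-invariant $N$--$F$ relation \eqref{eqncFFone}, with the freedom to take $c_1 = c_2$. Thus it suffices to exhibit an explicit $c_1$ that makes \eqref{eqncFFone} hold for \emph{all} $\upsilon_1,\upsilon_2,\upsilon_3\in\mathcal{I}$, and to check that this $c_1$ coincides with the claimed coefficient. First I would invoke the hypothesis that $\nu$ acts one-dimensionally on the left of $\lambda_{i+2}$, i.e.\ $\lambda_{i+2}\in\mathcal{I}^l_\nu$, so that by the definition of $\phi^l_\nu$ one has $N^{\upsilon_1}_{\nu\lambda_{i+2}} = \delta^{\upsilon_1}_{\phi^l_\nu(\lambda_{i+2})}$.

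With this, the left-hand side of \eqref{eqncFFone} vanishes unless $\upsilon_1 = \phi^l_\nu(\lambda_{i+2})$, and the natural candidate is the value of the right-hand factor at this forced channel,
\[
	c_1 = \left(F^{\lambda_i\lambda_{i+1}\lambda_{i+2}}_{\phi^l_\nu(\lambda_{i+2})}\right)^\nu_{\nu'} \left(\bar{F}^{\lambda_i\lambda_{i+1}\lambda_{i+2}}_{\phi^l_\nu(\lambda_{i+2})}\right)^{\nu'}_\nu,
\]
which is exactly the coefficient appearing in the statement. The remaining task is to verify \eqref{eqncFFone} on the two regimes of $\upsilon_1$. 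For $\upsilon_1 = \phi^l_\nu(\lambda_{i+2})$ the identity reduces to $c_1 N^\nu_{\lambda_i\lambda_{i+1}} N^{\upsilon_3}_{\upsilon_2\upsilon_1} = c_1 N^{\upsilon_3}_{\upsilon_2\upsilon_1}$, so I must show the prefactor $N^\nu_{\lambda_i\lambda_{i+1}}$ may be dropped; for $\upsilon_1 \neq \phi^l_\nu(\lambda_{i+2})$ the left side is already zero and I must show the right side vanishes as well.

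Both points are handled by the zero condition F.1: the $F$-move $\left(F^{\lambda_i\lambda_{i+1}\lambda_{i+2}}_{\upsilon_1}\right)^\nu_{\nu'}$ (and its $\bar{F}$ partner) is forced to vanish whenever $N^\nu_{\lambda_i\lambda_{i+1}}\,N^{\upsilon_1}_{\nu\lambda_{i+2}} = 0$, since these two factors both appear in the product tested by F.1. This simultaneously forces $c_1 = 0$ when $N^\nu_{\lambda_i\lambda_{i+1}} = 0$ (making the equation $0=0$ in that case, so the prefactor is harmless) and kills the right-hand side whenever $\upsilon_1 \neq \phi^l_\nu(\lambda_{i+2})$ via $N^{\upsilon_1}_{\nu\lambda_{i+2}} = 0$. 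Hence \eqref{eqncFFone} holds for all $\upsilon_1,\upsilon_2,\upsilon_3$ with this $c_1$, and Theorem~\ref{ThmGenTLlikeRel} yields $c_2 = c_1$, giving the first identity. The second identity follows from the mirror argument applied to \eqref{eqncFFtwo}, using $\lambda_{i-1}\in\mathcal{I}^r_\nu$ and $N^{\upsilon_1}_{\lambda_{i-1}\nu} = \delta^{\upsilon_1}_{\phi^r_\nu(\lambda_{i-1})}$ to collapse the sum onto $\upsilon_1 = \phi^r_\nu(\lambda_{i-1})$.

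The one genuinely delicate point — and the place I would be most careful — is the bookkeeping of the zero condition, ensuring that the lone $N^\nu_{\lambda_i\lambda_{i+1}}$ factor on the left of \eqref{eqncFFone} is correctly absorbed into the $F\bar{F}$ product on the right without producing a spurious mismatch. Everything else is a direct specialization of the theorem to the case where one-dimensionality removes the sum over the intermediate label $\upsilon_1$.
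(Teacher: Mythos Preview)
Your proposal is correct and follows exactly the route the paper intends: the paper's proof of this corollary is literally the one line ``This is a corollary of Theorem~\ref{ThmGenTLlikeRel},'' and you have spelled out precisely how, using one-dimensionality to collapse $N_{\nu\lambda_{i+2}}^{\upsilon_1}$ to a Kronecker delta and invoking F.1 to kill the off-channel and $N_{\lambda_i\lambda_{i+1}}^{\nu}=0$ cases. The bookkeeping you flag as delicate is handled exactly as you describe, so there is no gap.
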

\begin{proof}
This is a corollary of Theorem \ref{ThmGenTLlikeRel}.

\ifPrivateMode
Alternatively, the first relation is trivially satisfied if $N_{\lambda_{i}\lambda_{i+1}}^{\nu_{i}}N_{\lambda_{i+1}\lambda_{i+2}}^{\nu_{i+1}} = 0$, while the second is automatically satisfied if $N_{\lambda_{i-1}\lambda_{i}}^{\nu_{i-1}}N_{\lambda_{i}\lambda_{i+1}}^{\nu_{i}} = 0$. The proof that the relations hold in the other cases is presented in Appendix \ref{SecProofOneDProjOpRel} (Calculations \ref{calcTLlike1dRel1} and \ref{calcTLlike1dRel2}).
\fi
\end{proof}

\noindent
The above corollary corresponds to Result \ref{ResPart1DTL} recalling that when one simple object acts one-dimensionally on another it is the same as saying their tensor product is also simple. We remark that if $\nu$ in the above corollary is one-dimensional then conditions will automatically be met. This was described in Result \ref{Res1DTL}.

It is important to recall that while Theorem \ref{ThmGenTLlikeRel} provides sufficient and necessary conditions and Corollary \ref{CorOneDProjOpRel} provides only sufficient conditions the author know of no non-trivial case where aforementioned theorem can be applied but the corollary can not. The existence of such an example was presented in Open Problem \ref{op12} and here we present a more precise alternate version.

\begin{center} \begin{minipage}{0.9\textwidth}
\vspace{0.2cm}
\begin{opAlt1}
Does there exist a semi-simple tensor system with indexing set $\mathcal{I}$ and $\nu,\nu',\lambda_{1},\lambda_{2},\lambda_{3} \in\mathcal{I}$ such that either
\begin{align*}
	c N_{\lambda_{1}\lambda_{2}}^{\nu} N_{\nu\lambda_{3}}^{\upsilon_{1}} N_{\upsilon_{2}\upsilon_{1}}^{\upsilon_{3}}
	& = N_{\upsilon_{2}\upsilon_{1}}^{\upsilon_{3}} \left(F^{\lambda_{1}\lambda_{2}\lambda_{3}}_{\upsilon_{1}}\right)^{\nu}_{\nu'} 	\left(\bar{F}^{\lambda_{1}\lambda_{2}\lambda_{3}}_{\upsilon_{1}}\right)^{\nu'}_{\nu} ,
	& \upsilon_{1},\upsilon_{2},\upsilon_{3}\in\mathcal{I}, \quad \mbox{and} \\
	\sum_{\mu} N_{\nu\lambda_{3}}^{\mu} & \quad \geq \quad 1,
\end{align*}
for a unique non-zero $c\in \textbf{R}$ or
\begin{align*}
	c N_{\lambda_{2}\lambda_{3}}^{\nu} N_{\lambda_{1}\nu}^{\upsilon_{1}} N_{\upsilon_{2}\upsilon_{1}}^{\upsilon_{3}}  
	& = N_{\upsilon_{2}\upsilon_{1}}^{\upsilon_{3}} \left(\bar{F}^{\lambda_{1}\lambda_{2}\lambda_{3}}_{\upsilon_{1}}\right)^{\nu}_{\nu'} 		\left(F^{\lambda_{1}\lambda_{2}\lambda_{3}}_{\upsilon_{1}}\right)^{\nu'}_{\nu} ,
	& \upsilon_{1},\upsilon_{2},\upsilon_{3}\in\mathcal{I}, \quad \mbox{and}\\
	\sum_{\mu} N_{\lambda_{1}\nu}^{\mu} & \quad \geq \quad 1,
\end{align*}
for a unique non-zero $c\in \textbf{R}$?
\end{opAlt1}
\vspace{0.2cm}
\end{minipage} \end{center}

\noindent
As we wish to avoid uninteresting cases, such as $N_{\lambda_{1}\lambda_{2}}^{\nu}=0$, we also included the requirement of a unique non-zero $c$. 

We can also ask if there is a difference between an object acting one-dimensionally and being one-dimensional. This can be phased as does $a\tp b$ being simple imply that $a \tp c$ is simple for all simple objects $c$ or $c \tp b$ is simple for all simple objects $c$? 

The answer is no. This is seen by taking the Fibonacci anyons (Fib) with objects $\mathcal{I}=\{1,\tau\}$ where $1$ is the identity (satisfies N.3) and has the relation $\tau \tp \tau \cong 1 \oplus \tau$. One can then take Fib$\times$Fib which has objects $\mathcal{I}=\{1,1),(1,\tau),(\tau,1),(\tau,\tau)\}$ and has, amongst others, the fusion relations
\begin{align*}
 (1,\tau) \tp (\tau,1) & \cong (\tau,\tau), \\
 (1,\tau) \tp (\tau,\tau) & \cong (\tau,\tau) \oplus (\tau,1), \\
 (\tau,\tau) \tp (\tau,1) & \cong (\tau,\tau) \oplus (1,\tau).
\end{align*}
While this example answers the question it is deceptive as it still relies on a one-dimensional object, namely $1$. 
Consequently, we wish to ignore cases where the semi-simple tensor category is the direct product (as defined in \cite{BonderThesis2007}) of other semi-simple tensor categories, thus we arrive at the Open Problem \ref{op23}.

\ifPrivateMode
Thus we rephrase the question as follows: (GRRRR!)
\begin{center} \begin{minipage}{6.5in}
\vspace{0.2cm}
\begin{opAlt2}
Does there exist a fusion category system which is not the Cartesian of two non-trivial semi-simple tensor systems for which there exist $a,b,c,d \in \mathcal{I}$ such that
\begin{align*}
 \sum_{e\in\mathcal{I}}N_{ab}^{e} & = 1, \\
 \sum_{e\in\mathcal{I}}N_{ac}^{e} & \geq 1, \\
 \sum_{e\in\mathcal{I}}N_{db}^{e} & \geq 1.
\end{align*}
\end{opAlt2}
\vspace{0.2cm}
\end{minipage} \end{center}
This is the formal statement of Open Problem \ref{op23}. FIX ME!

There exists an extension of the above corollary that allows one to construct a family of operators that satisfy the Temperley-Lieb like relations.
\begin{cor}
Consider a semi-simple tensor system with indexing set $\mathcal{I}$ along with tuples $\tilde{\lambda}=(\lambda_{1},...,\lambda_{ L})$ and $\tilde{\lambda}=(\nu_{1},...,\nu_{L-1})$ where $\lambda_{i},\nu_{i}\in\mathcal{I}$. If
\begin{align*}
	N_{\lambda_{i}\lambda_{i+1}}^{\nu_{i}} & = 1 & 1 \leq i \leq L-1, \\
	\lambda_{i+2} & \quad \in \quad \mathcal{I}^{l}_{\nu_{i}} & 1 \leq i \leq L-2, \\
	\lambda_{i-2} & \quad \in \quad \mathcal{I}^{r}_{\nu_{i}} & 2 \leq i \leq L-1, \\
	c_{i} & = \left(F^{\lambda_{i}\lambda_{i+1}\lambda_{i+2}}_{\phi^{l}_{\nu_{i}}(\lambda_{i+2})}\right)^{\nu_{i}}_{\nu_{i+1}} \left(\bar{F}^{\lambda_{i}\lambda_{i+1}\lambda_{i+2}}_{\phi^{l}_{\nu_{i}}(\lambda_{i+2})}\right)^{\nu_{i+1}}_{\nu_{i}} \, \in \, \textbf{R}^{\times}, & 1\leq i\leq L-1,
\end{align*}
then for any $d\in \textbf{R}^{\times}$ the operators
\begin{align*}
	U_{1} & = d p_{1}^{(\nu_{1})}, \\
	U_{2i} & = \left[\frac{\prod_{j=1}^{i-1}c_{2j}}{d\prod_{j=1}^{i}c_{2j-1}}\right]p_{2i}^{(\nu_{2i})} & 1 \leq 2i \leq L-1,  \\
	U_{2i+1} & = \left[\frac{d\prod_{j=1}^{i}c_{2j-1}}{\prod_{j=1}^{i}c_{2j}}\right]p_{2i+1}^{(\nu_{2i+1})} & 1 \leq 2i+1 \leq L-1,
\end{align*}
acting on the Hilbert $\mathcal{H}_{\tilde{\lambda}}$, satisfy the relations
\begin{align*}
	U_{i}^{2} & \quad\propto\quad U_{i} & 1\leq i\leq L-1, \\
	U_{i}U_{j} & = U_{j}U_{i} & 1\leq i \leq j+2 \leq L-1, \\
	U_{i}U_{i\pm1}U_{i} & = U_{i} & 1\leq i,i\pm1 \leq L-1.
\end{align*}
\end{cor}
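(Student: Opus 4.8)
The plan is to write each generator as $U_i = s_i\,p_i^{(\nu_i)}$, where $s_i \in \textbf{R}^{\times}$ is the bracketed prefactor appearing in the statement (with $s_1 = d$), and to verify the three families of relations in order of increasing difficulty, leaning on the elementary properties of the two-site projections recorded after \eqref{eqndfnProjOp} together with Corollary \ref{CorOneDProjOpRel}. The first two families are immediate. Because $p_i^{(\nu_i)}$ is idempotent, $U_i^2 = s_i^2\,p_i^{(\nu_i)} = s_i\,U_i$, so $U_i^2 \propto U_i$ with a unit constant of proportionality. Because the projections are local, $p_i^{(\nu_i)}$ and $p_j^{(\nu_j)}$ commute whenever $1 \le i \le j+2 \le L-1$, and hence so do their scalar multiples $U_i$ and $U_j$. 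Neither argument uses the fusion or one-dimensionality hypotheses.

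All the content sits in the relation $U_i U_{i\pm1} U_i = U_i$, which I would prove by reducing each triple product of projections to a single projection via Corollary \ref{CorOneDProjOpRel} and then checking that the scalars telescope. Applying the first identity of that corollary at site $i$ --- legitimate since $\lambda_{i+2} \in \mathcal{I}^l_{\nu_i}$ --- yields $p_i^{(\nu_i)} p_{i+1}^{(\nu_{i+1})} p_i^{(\nu_i)} = c_i\,p_i^{(\nu_i)}$ with precisely the $c_i$ of the statement, so $U_i U_{i+1} U_i = (s_i s_{i+1} c_i)\,U_i$. Applying the second identity at site $i$ --- legitimate since the right-hand hypothesis makes $\lambda_{i-1}\tp\nu_i$ simple --- yields $p_i^{(\nu_i)} p_{i-1}^{(\nu_{i-1})} p_i^{(\nu_i)} = \hat c_i\,p_i^{(\nu_i)}$ with
\[
  \hat c_i = \left(F^{\lambda_{i-1}\lambda_i\lambda_{i+1}}_{\phi^r_{\nu_i}(\lambda_{i-1})}\right)^{\nu_{i-1}}_{\nu_i}\left(\bar F^{\lambda_{i-1}\lambda_i\lambda_{i+1}}_{\phi^r_{\nu_i}(\lambda_{i-1})}\right)^{\nu_i}_{\nu_{i-1}},
\]
so $U_i U_{i-1} U_i = (s_{i-1} s_i \hat c_i)\,U_i$. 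Thus the entire statement collapses to the two scalar identities $s_i s_{i+1} c_i = 1$ (for all admissible $i$) and $s_{i-1} s_i \hat c_i = 1$.

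The first identity $s_i s_{i+1} = c_i^{-1}$ is a routine telescoping computation: substituting the prefactors and splitting on the parity of $i$, the partial products $\prod_j c_{2j}$, $\prod_j c_{2j-1}$ and the free scalar $d$ all cancel, leaving $c_i^{-1}$ in every case, and this is exactly what the normalization was engineered to achieve. The main obstacle --- the only step that is genuinely categorical rather than bookkeeping --- is the second identity. Since $s_{i-1} s_i = c_{i-1}^{-1}$ by the telescoping already established, it suffices to prove $\hat c_i = c_{i-1}$. Both are $(F)(\bar F)$ products built on $F^{\lambda_{i-1}\lambda_i\lambda_{i+1}}_{d}$ with upper index $\nu_{i-1}$ and lower index $\nu_i$; they differ only in the subscript, $d = \phi^r_{\nu_i}(\lambda_{i-1})$ for $\hat c_i$ versus $d = \phi^l_{\nu_{i-1}}(\lambda_{i+1})$ for $c_{i-1}$. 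Here I would invoke the multiplicity-free zero condition F.1: a nonvanishing $\left(F^{\lambda_{i-1}\lambda_i\lambda_{i+1}}_{d}\right)^{\nu_{i-1}}_{\nu_i}$ forces $N^{d}_{\lambda_{i-1}\nu_i} = N^{d}_{\nu_{i-1}\lambda_{i+1}} = 1$ simultaneously. But $\lambda_{i-1}\tp\nu_i$ and $\nu_{i-1}\tp\lambda_{i+1}$ are both simple by the two one-dimensionality hypotheses, so the only admissible values are $d = \phi^r_{\nu_i}(\lambda_{i-1})$ and $d = \phi^l_{\nu_{i-1}}(\lambda_{i+1})$ respectively; compatibility forces these to coincide. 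With a common subscript the two expressions are literally identical, and since such $(F)(\bar F)$ products are gauge invariant the identification is unambiguous. The hypothesis that the $c_i$ are units guarantees the relevant F-move is nonzero, so the zero-condition argument is not vacuous. Collecting the two scalar identities gives $U_i U_{i\pm1} U_i = U_i$ for every admissible $i$, which together with the first two families completes the proof.
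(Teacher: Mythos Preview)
Your argument is correct and matches the paper's approach. The paper's proof is a single line --- ``As $c_{i}\in\textbf{R}^{\times}$ we have that $\phi^{r}_{\nu_{i+1}}(\lambda_{i})=\phi^{l}_{\nu_{i}}(\lambda_{i+2})$. The rest follows from the definition of the projection operators and Corollary~\ref{CorOneDProjOpRel}'' --- and your write-up is a faithful expansion of exactly this: the telescoping of the prefactors $s_i$ handles the ``rest,'' while your zero-condition argument (F.1) that a nonvanishing $F$-move forces the two $\phi$ subscripts to coincide is precisely the paper's key observation re-indexed by one.
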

\begin{proof}
As $c_{i}\in\textbf{R}^{\times}$ we have that $\phi^{r}_{\nu_{i}}(\lambda_{i+2})=\phi^{l}_{\nu_{i+1}}(\lambda_{i})$. The rest follows from the definition of the projection operators and Corollary \ref{CorOneDProjOpRel}.
\end{proof}
\fi

There is a further special case which uncovers the usual Temperley-Lieb relations.
\begin{cor} \label{corUsualTL}
Consider a semi-simple tensor system with objects indexed by $\mathcal{I}$ and two element $\lambda,\nu\in\mathcal{I}$ such that $\nu$ acts one-dimensionally on $\lambda$ and $N_{\lambda\lambda}^{\nu}=1$. If there exists $d\in\textbf{R}^{\times}$ such that
\begin{align*}
  d^{-2} & = \left(F^{\lambda\lambda\lambda}_{\phi_{\nu}(\lambda)}\right)^{\nu}_{\nu} \left(\bar{F}^{\lambda\lambda\lambda}_{\phi_{\nu}(\lambda)}\right)^{\nu}_{\nu},
\end{align*}
then acting on the Hilbert space $\mathcal{H}_{\tilde{\lambda}}$ where $\tilde{\lambda}=(\lambda,..,\lambda)$  the operators
\begin{align} \label{eqnUopHomo}
	U_{i} & = d p_{i}^{(\nu)} & 1 \leq i \leq L-1,
\end{align}
satisfy the Temperley-Lieb relations
\begin{align*}
	U_{i}^{2} & = d U_{i} & 1\leq i\leq L-1, \\
	U_{i}U_{j} & = U_{j}U_{i} & 1\leq i \leq j+2 \leq L-1, \\
	U_{i}U_{i\pm1}U_{i} & = U_{i} & 1\leq i,i\pm1 \leq L-1.
\end{align*}
\end{cor}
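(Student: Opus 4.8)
The plan is to treat this statement as the fully homogeneous specialisation ($\lambda_{j}=\lambda$ for every $j$, with every projector onto the same $\nu$) of the relations already in hand, and to split the three families of Temperley-Lieb relations according to which earlier fact supplies each one. Throughout I would write $e_{i}:=p_{i}^{(\nu)}$ so that $U_{i}=d\,e_{i}$, and I would repeatedly invoke the locality, orthogonality and idempotency of the two-site projectors together with Corollary \ref{CorOneDProjOpRel}.

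The two easy families require no hypothesis on $d$. Orthogonality of the projectors, $p_{i}^{(\nu)}p_{i}^{(\nu')}=\delta_{\nu}^{\nu'}p_{i}^{(\nu)}$, gives $e_{i}^{2}=e_{i}$, so $U_{i}^{2}=d^{2}e_{i}^{2}=d^{2}e_{i}=d(d\,e_{i})=d\,U_{i}$; and locality gives $e_{i}e_{j}=e_{j}e_{i}$ for $|i-j|\geq 2$, whence $U_{i}U_{j}=d^{2}e_{i}e_{j}=d^{2}e_{j}e_{i}=U_{j}U_{i}$. Both hold verbatim on $\mathcal{H}_{\tilde{\lambda}}$ with $\tilde{\lambda}=(\lambda,\dots,\lambda)$.

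The substantive relation $U_{i}U_{i\pm1}U_{i}=U_{i}$ is where Corollary \ref{CorOneDProjOpRel} does the work. The assumption that $\nu$ acts one-dimensionally on $\lambda$ means $\lambda\in\mathcal{I}^{l}_{\nu}\cap\mathcal{I}^{r}_{\nu}$, so on the constant sequence both relations of that corollary apply with $\nu'=\nu$. Setting every $\lambda_{j}=\lambda$ they read $e_{i}e_{i+1}e_{i}=P_{l}\,e_{i}$ and $e_{i+1}e_{i}e_{i+1}=P_{r}\,e_{i+1}$, where $P_{l}=\left(F^{\lambda\lambda\lambda}_{\phi^{l}_{\nu}(\lambda)}\right)^{\nu}_{\nu}\left(\bar{F}^{\lambda\lambda\lambda}_{\phi^{l}_{\nu}(\lambda)}\right)^{\nu}_{\nu}$ and $P_{r}=\left(F^{\lambda\lambda\lambda}_{\phi^{r}_{\nu}(\lambda)}\right)^{\nu}_{\nu}\left(\bar{F}^{\lambda\lambda\lambda}_{\phi^{r}_{\nu}(\lambda)}\right)^{\nu}_{\nu}$. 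Multiplying by $d^{3}$ turns these into $U_{i}U_{i+1}U_{i}=d^{2}P_{l}\,U_{i}$ and $U_{i+1}U_{i}U_{i+1}=d^{2}P_{r}\,U_{i+1}$, so the claim reduces to showing $P_{l}=P_{r}=d^{-2}$.

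The hard part will be the coincidence $P_{l}=P_{r}$, which is precisely what the unadorned notation $\phi_{\nu}(\lambda)$ in the statement presupposes; I would not try to prove $\phi^{l}_{\nu}(\lambda)=\phi^{r}_{\nu}(\lambda)$ directly, since in a non-braided system $\nu\tp\lambda$ and $\lambda\tp\nu$ need not be the same simple object. Instead I would use associativity of operator multiplication: expanding $e_{i}e_{i+1}e_{i}e_{i+1}$ as $(e_{i}e_{i+1}e_{i})e_{i+1}=P_{l}\,e_{i}e_{i+1}$ and as $e_{i}(e_{i+1}e_{i}e_{i+1})=P_{r}\,e_{i}e_{i+1}$ yields $(P_{l}-P_{r})\,e_{i}e_{i+1}=0$. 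Since $N_{\lambda\lambda}^{\nu}=1$ keeps the $\nu$-channel available, $e_{i}e_{i+1}\neq 0$ for $L$ large enough, forcing $P_{l}=P_{r}$; this common gauge-invariant value is what the corollary denotes $\left(F^{\lambda\lambda\lambda}_{\phi_{\nu}(\lambda)}\right)^{\nu}_{\nu}\left(\bar{F}^{\lambda\lambda\lambda}_{\phi_{\nu}(\lambda)}\right)^{\nu}_{\nu}$. The hypothesis $d\in\textbf{R}^{\times}$ with $d^{-2}$ equal to this value then gives $d^{2}P_{l}=d^{2}P_{r}=1$, closing $U_{i}U_{i\pm1}U_{i}=U_{i}$. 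The only point needing care is the cancellation of $e_{i}e_{i+1}$, which is immediate over a field or integral domain and otherwise follows by inspecting a single nonzero matrix entry of the projector product.
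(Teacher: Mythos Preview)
Your argument is correct and matches the paper's intent: the paper's own proof is the single line ``This follows directly from the previous corollary,'' i.e.\ exactly your reduction to the idempotency/locality of the $p_i^{(\nu)}$ together with Corollary~\ref{CorOneDProjOpRel} specialised to the homogeneous tuple $\tilde{\lambda}=(\lambda,\dots,\lambda)$ with $\nu'=\nu$.

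You go further than the paper by worrying about whether the ``left'' and ``right'' constants $P_l$ and $P_r$ (built from $\phi^{l}_{\nu}(\lambda)$ and $\phi^{r}_{\nu}(\lambda)$ respectively) agree; the paper's one-line proof and its use of the undecorated symbol $\phi_{\nu}$ simply gloss over this. Your associativity trick $(P_l-P_r)\,e_i e_{i+1}=0$ is a valid resolution, but the cancellation step is a little delicate over a general commutative ring. A cleaner route, which also shows $\phi^{l}_{\nu}(\lambda)=\phi^{r}_{\nu}(\lambda)$ outright, is available from the hypothesis $d^{-2}\in\mathbf{R}^{\times}$: this forces $\bigl(F^{\lambda\lambda\lambda}_{\phi^{l}_{\nu}(\lambda)}\bigr)^{\nu}_{\nu}\neq 0$, so by the Zero Condition F.1 one has $N_{\lambda\nu}^{\phi^{l}_{\nu}(\lambda)}=1$, and since $\nu$ acts one-dimensionally on the right of $\lambda$ the unique object with $N_{\lambda\nu}^{?}=1$ is $\phi^{r}_{\nu}(\lambda)$; hence $\phi^{l}_{\nu}(\lambda)=\phi^{r}_{\nu}(\lambda)$ and $P_l=P_r=d^{-2}$ immediately.
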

\begin{proof}
This follows directly from the previous corollary.
\end{proof}

\subsection{BMW like relations}
This section deals with the construction of BMW like relations and requires braided fusion categories and the existence of elements that act one-dimensionally. We first recall that for braided semi-simple tensor systems $N_{ab}^{c}=N_{ba}^{c}$ which implies that for all $\nu\in\mathcal{I}$ that $\mathcal{I}^{l}_{\nu}=\mathcal{I}^{r}_{\nu}$ and $\phi^{l}_{\nu} = \phi^{r}_{\nu}$. For convenience we define $\phi_{\nu}=\phi^{l}_{\nu}$. \\

\noindent
For any Hilbert space $\mathcal{H}_{\tilde{\lambda}}$ with $\tilde{\lambda}=(\lambda,...,\lambda)$ we have the operator
\begin{align} \label{eqnRmathomo}
	R_{i} & = \sum_{\mu} R^{\lambda\lambda}_{\nu} p_{i}^{(\mu)}.
\end{align}
It follows from the definition of the $R^{ab}_{c}$ that $R_{i}$ is invertible. Moreover, we have (I haven't proved this yet but am sure it is known)
\begin{align}
	R_{i}R_{i+1}R_{i} & = R_{i+1}R_{i}R_{i+1} 		& 1 \leq i \leq L-2, \label{eqnBraid} \\
	R_{i}R_{i'} & =  R_{i'}R_{i}	 								& 1 \leq i \leq i'-1 \leq L-2. \label{eqnBraidCom}
\end{align}
We are able to determine the following BMW like relations.
\begin{lem} \label{lemBraidaTLop}
Consider a semi-simple tensor system with objects indexed by $\mathcal{I}$ and two element $\lambda,\nu\in\mathcal{I}$ such that $\nu$ acts one-dimensionally on $\lambda$. If there exists $c\in\R^{\times}$ such that
\begin{align*}
  c & = \left(F^{\lambda\lambda\lambda}_{\phi_{\nu}(\lambda)}\right)^{\nu}_{\nu} \left(\bar{F}^{\lambda\lambda\lambda}_{\phi_{\nu}(\lambda)}\right)^{\nu}_{\nu},
\end{align*}
then the operators $p_{i}^{(\nu)}$ and $R_{i}$ defined in Equations (\ref{eqndfnProjOp}) and (\ref{eqnRmathomo}) acting on the Hilbert space $\mathcal{H}_{\tilde{\lambda}}$ with $\tilde{\lambda}=(\lambda,...,\lambda)$, satisfy the relations
\begin{align*}
	R_{i}p_{i}^{(\nu)} & = p_{i}^{(\nu)}R_{i} = R^{\lambda\lambda}_{\nu} p_{i}^{(\nu)}, \\
	R_{i}^{-1}p_{i}^{(\nu)} & = p_{i}^{(\nu)}R_{i}^{-1} = \bar{R}^{\lambda\lambda}_{\nu} p_{i}^{(\nu)}, \\
	p_{i}^{(\nu)}R_{i\pm1}p_{i}^{(\nu)} & = \left[\sum_{\upsilon\in\mathcal{I}} \left(F^{\lambda\lambda\lambda}_{\phi_{\nu}(\lambda)}\right)^{\upsilon}_{\nu}  R^{\lambda\lambda}_{\upsilon} \left(\bar{F}^{\lambda\lambda\lambda}_{\phi_{\nu}(\lambda)}\right)^{\nu}_{\upsilon} \right] p_{i}^{\nu}, \\
	p_{i}^{(\nu)}R_{i\pm1}^{-1}p_{i}^{(\nu)} & = \left[\sum_{\upsilon\in\mathcal{I}}  \left(F^{\lambda\lambda\lambda}_{\phi_{\nu}(\lambda)}\right)^{\upsilon}_{\nu} \bar{R}^{\lambda\lambda}_{\upsilon}  \left(\bar{F}^{\lambda\lambda\lambda}_{\phi_{\nu}(\lambda)}\right)^{\nu}_{\upsilon} \right] p_{i}^{\nu}, \\
	R_{i}R_{i\pm1}p_{i}^{(\nu)} 
	& = p_{i\pm1}^{(\nu)}R_{i}R_{i\pm1} 
	= c^{-1} \left[R^{\lambda\lambda}_{\nu} \sum_{\upsilon\in\mathcal{I}} \left(F^{\lambda\lambda\lambda}_{\phi_{\nu}(\lambda)}\right)^{\upsilon}_{\nu}  R^{\lambda\lambda}_{\upsilon}  \left(\bar{F}^{\lambda\lambda\lambda}_{\phi_{\nu}(\lambda)}\right)^{\nu}_{\upsilon} \right] p_{i\pm1}^{(\nu)}p_{i}^{(\nu)} \\
	R_{i}^{-1}R_{i\pm1}^{-1}p_{i}^{(\nu)},
	& = p_{i\pm1}^{(\nu)}R_{i}^{-1}R_{i\pm1}^{-1} 
	= c^{-1} \left[\bar{R}^{\lambda\lambda}_{\nu} \sum_{\upsilon\in\mathcal{I}} \left(F^{\lambda\lambda\lambda}_{\phi_{\nu}(\lambda)}\right)^{\upsilon}_{\nu}  R^{\lambda\lambda}_{\upsilon}  \left(\bar{F}^{\lambda\lambda\lambda}_{\phi_{\nu}(\lambda)}\right)^{\nu}_{\upsilon} \right] p_{i\pm1}^{(\nu)}p_{i}^{(\nu)}.
\end{align*}
\end{lem}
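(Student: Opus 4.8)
My plan is to organise the six identities into three groups of increasing difficulty, exploiting throughout that $R_{i}=\sum_{\mu}R^{\lambda\lambda}_{\mu}p_{i}^{(\mu)}$ is literally the spectral decomposition of $R_{i}$ into the complete orthogonal idempotent system $\{p_{i}^{(\mu)}\}$ (the local, orthogonal, idempotent properties listed after \eqref{eqndfnProjOp}). The first group is immediate: orthogonality gives $R_{i}p_{i}^{(\nu)}=\sum_{\mu}R^{\lambda\lambda}_{\mu}p_{i}^{(\mu)}p_{i}^{(\nu)}=R^{\lambda\lambda}_{\nu}p_{i}^{(\nu)}$, and identically on the other side. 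For the inverse, axiom R.4 with $a=b=\lambda$ yields $R^{\lambda\lambda}_{\mu}\bar{R}^{\lambda\lambda}_{\mu}=N^{\mu}_{\lambda\lambda}$, so on the support of the projectors $(R^{\lambda\lambda}_{\mu})^{-1}=\bar{R}^{\lambda\lambda}_{\mu}$ and hence $R_{i}^{-1}=\sum_{\mu}\bar{R}^{\lambda\lambda}_{\mu}p_{i}^{(\mu)}$; the same one-line computation gives the $R_{i}^{-1}p_{i}^{(\nu)}$ identity.

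For the second group I would expand the central factor, $p_{i}^{(\nu)}R_{i\pm1}p_{i}^{(\nu)}=\sum_{\upsilon}R^{\lambda\lambda}_{\upsilon}\,p_{i}^{(\nu)}p_{i\pm1}^{(\upsilon)}p_{i}^{(\nu)}$, and feed each triple product into Corollary \ref{CorOneDProjOpRel}, whose hypothesis (that $\nu$ act one-dimensionally on $\lambda$, equivalently on the right and left since the system is braided) is exactly what we assume. The $+$ sign uses the first part of the corollary and the $-$ sign the second; in the homogeneous tuple $\tilde{\lambda}=(\lambda,\dots,\lambda)$ each collapses $p_{i}^{(\nu)}p_{i\pm1}^{(\upsilon)}p_{i}^{(\nu)}$ to a scalar times $p_{i}^{(\nu)}$, and summing against $R^{\lambda\lambda}_{\upsilon}$ reproduces the stated bracket. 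The $R^{-1}$ versions are identical after substituting $R_{i\pm1}=\sum_{\upsilon}\bar{R}^{\lambda\lambda}_{\upsilon}p_{i\pm1}^{(\upsilon)}$.

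The third group is where the work lies, and I would split each identity into a transport step and a normalisation step. For transport I use the braid relation (\ref{eqnBraid}): from $R_{i}R_{i+1}R_{i}=R_{i+1}R_{i}R_{i+1}$ one gets $(R_{i}R_{i+1})R_{i}(R_{i}R_{i+1})^{-1}=R_{i+1}$, so conjugation by $R_{i}R_{i+1}$ carries the spectral resolution of $R_{i}$ onto that of $R_{i+1}$, giving $R_{i}R_{i+1}p_{i}^{(\nu)}(R_{i}R_{i+1})^{-1}=p_{i+1}^{(\nu)}$, equivalently $R_{i}R_{i+1}p_{i}^{(\nu)}=p_{i+1}^{(\nu)}R_{i}R_{i+1}$ (which is the first claimed equality, and $=p_{i+1}^{(\nu)}R_{i}R_{i+1}p_{i}^{(\nu)}$ after reinserting the idempotent on the right). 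For normalisation I would note that, since $\nu$ acts one-dimensionally on $\lambda$, on each block of $\mathcal{H}_{\tilde{\lambda}}$ with the outer labels $\mu_{i-1},\mu_{i+2}$ frozen the projectors $p_{i}^{(\nu)}$ and $p_{i+1}^{(\nu)}$ are each of rank at most one, so any $p_{i+1}^{(\nu)}\mathcal{O}p_{i}^{(\nu)}$ is a scalar multiple of $p_{i+1}^{(\nu)}p_{i}^{(\nu)}$. Writing $R_{i}R_{i+1}p_{i}^{(\nu)}=\kappa\,p_{i+1}^{(\nu)}p_{i}^{(\nu)}$ and multiplying on the left by $p_{i}^{(\nu)}$, the right side becomes $\kappa\,p_{i}^{(\nu)}p_{i+1}^{(\nu)}p_{i}^{(\nu)}=\kappa c\,p_{i}^{(\nu)}$ (Corollary \ref{CorOneDProjOpRel} with $\upsilon=\nu$), while the left side is $R^{\lambda\lambda}_{\nu}\,p_{i}^{(\nu)}R_{i+1}p_{i}^{(\nu)}$, already evaluated in the second group; equating fixes $\kappa=c^{-1}R^{\lambda\lambda}_{\nu}[\cdots]$. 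The $-$ case repeats this with $R_{i}R_{i-1}$ and the second part of the corollary, and the inverse relation follows verbatim after replacing $R^{\lambda\lambda}_{\nu}$ by $\bar{R}^{\lambda\lambda}_{\nu}$ and using the relation-4 bracket and the inverse braid relation.

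The main obstacle is the transport step: justifying that conjugation by $R_{i}R_{i+1}$ sends the individual idempotent $p_{i}^{(\nu)}$, rather than merely the $R_{i}$-eigenspace containing it, onto $p_{i+1}^{(\nu)}$. This is automatic when the eigenvalues $R^{\lambda\lambda}_{\mu}$ are pairwise distinct, so that the spectral resolution is unique; in the degenerate case I would instead verify it directly from the matrix-element description of the projectors on $\mathcal{H}_{\tilde{\lambda}}$, tracking that eigenvalues and fusion labels are transported together. A secondary point to check carefully is the rank-one corner claim, which I expect to reduce to one-dimensionality forcing the outer labels $\mu_{i+1},\mu_{i+2}=\phi_{\nu}(\lambda)$, together with confirming that the $+$ and $-$ brackets coincide, a symmetry I would trace back to the hexagon relations R.2 and R.3.
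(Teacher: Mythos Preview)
Your treatment of the first four relations is exactly what the paper does: relations one and two are immediate from the spectral form $R_{i}=\sum_{\mu}R^{\lambda\lambda}_{\mu}p_{i}^{(\mu)}$ together with orthogonality and idempotency, and relations three and four follow by expanding $R_{i\pm1}$ in projectors and applying Corollary~\ref{CorOneDProjOpRel} term by term.

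For the fifth and sixth relations your route differs from the paper. The paper does not use the braid relation or any rank argument at all: it computes $\bra{\mu'''}R_{i}R_{i\pm1}p_{i}^{(\nu)}\ket{\mu}$ and $\bra{\mu'''}p_{i\pm1}^{(\nu)}R_{i}R_{i\pm1}\ket{\mu}$ directly by inserting the spectral expansion of $R$, invoking Calculations~\ref{calcpppV1}--\ref{calcpppV2} for the triple projector, collapsing the $\nu',\nu''$ sums via special cases of the hexagon relation, and then using pentagon identities plus the one-dimensionality of $\nu$ on $\lambda$ to reduce everything to $p_{i\pm1}^{(\nu)}p_{i}^{(\nu)}$ with the stated scalar (Calculations~\ref{calcRRppp1}--\ref{calcRRppp3}). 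The equality of the two sides and the value of the constant emerge from the same computation; no transport step is isolated.

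Your proposed shortcut has two genuine gaps that you have only partially flagged. First, the transport step $R_{i}R_{i+1}p_{i}^{(\nu)}(R_{i}R_{i+1})^{-1}=p_{i+1}^{(\nu)}$ does not follow from the braid relation~(\ref{eqnBraid}) alone: that relation only tells you conjugation carries the $R^{\lambda\lambda}_{\nu}$-eigenspace of $R_{i}$ to that of $R_{i+1}$, and in the degenerate case the fusion-channel projector $p_{i}^{(\nu)}$ is a proper subprojector of the spectral projector. The transport is true, but in this paper's framework it is a consequence of the pentagon and hexagon identities, not of (\ref{eqnBraid}); your fallback ``verify it directly from the matrix-element description'' is precisely the paper's computation. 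Second, your rank-one claim fails as stated: the hypothesis is that $\nu$ acts one-dimensionally on $\lambda$, not on arbitrary path labels, so with $\mu_{i-1},\mu_{i+2}$ frozen there can be several $\mu_{i+1}$ with $N_{\mu_{i-1}\nu}^{\mu_{i+1}}N_{\mu_{i+1}\lambda}^{\mu_{i+2}}=1$, and $p_{i}^{(\nu)}$ then has rank greater than one on that block. Without this, you cannot write $p_{i+1}^{(\nu)}R_{i}R_{i+1}p_{i}^{(\nu)}=\kappa\,p_{i+1}^{(\nu)}p_{i}^{(\nu)}$ with a scalar $\kappa$, so your normalisation step has nothing to act on. The paper sidesteps both issues by never separating transport from normalisation and by using the hexagon relation explicitly rather than only its consequence~(\ref{eqnBraid}).
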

\begin{proof}
The first and second relations are consequence of $p_{i}^{(\nu)}$ being an idempotent projection operator. The third and fourth relations follow from Corollary \ref{CorOneDProjOpRel}. The fifth and sixth relations are shown in Calculations \ref{calcRRppp1}-\ref{calcRRppp3}.
\end{proof}
\noindent
It should be noted that using the hexagon relation we are able to write the expression(s)
\begin{align*}
	 \sum_{\upsilon\in\mathcal{I}} \left(F^{\lambda\lambda\lambda}_{\phi_{\nu}(\lambda)}\right)^{\upsilon}_{\nu}  R^{\lambda\lambda}_{\upsilon}  \left(\bar{F}^{\lambda\lambda\lambda}_{\phi_{\nu}(\lambda)}\right)^{\nu}_{\upsilon} 
	 & = R^{\nu\lambda}_{\phi_{\nu}(\lambda)} \left(F^{\lambda\lambda\lambda}_{\phi_{\nu}(\lambda)}\right)^{\nu}_{\nu} \bar{R}^{\lambda\lambda}_{\nu} \\
	 & = R^{\lambda\lambda}_{\nu} \left(\bar{F}^{\lambda\lambda\lambda}_{\phi_{\nu}(\lambda)}\right)^{\nu}_{\nu} \bar{R}^{\lambda\nu}_{\phi_{\nu}(\lambda)} \\
	 & = \bar{R}^{\nu\lambda}_{\phi_{\nu}(\lambda)} \left(F^{\lambda\lambda\lambda}_{\phi_{\nu}(\lambda)}\right)^{\nu}_{\nu} R^{\lambda\lambda}_{\nu} \\
	 & = \bar{R}^{\lambda\lambda}_{\nu} \left(\bar{F}^{\lambda\lambda\lambda}_{\phi_{\nu}(\lambda)}\right)^{\nu}_{\nu} R^{\lambda\nu}_{\phi_{\nu}(\lambda)}
\end{align*}

\begin{thm} \label{ThmBMW}
Consider a semi-simple tensor system with objects indexed by $\mathcal{I}$ and two element $\lambda,\nu\in\mathcal{I}$ such that $\nu$ acts one-dimensionally on $\lambda$. If there exists $d,g\in\R^{\times}$ such that
\begin{align*}
  d^{-1} & = \sum_{\upsilon\in\mathcal{I}} \left(F^{\lambda\lambda\lambda}_{\phi_{\nu}(\lambda)}\right)^{\upsilon}_{\nu}  R^{\lambda\lambda}_{\upsilon}  \left(\bar{F}^{\lambda\lambda\lambda}_{\phi_{\nu}(\lambda)}\right)^{\nu}_{\upsilon},\\
  g^{-2} & = R^{\lambda\lambda}_{\nu}
\end{align*}
then the operators 
\begin{align*}
	U_{i} & = d p_{i}^{(\nu)} & 1 \leq i \leq L-1, \\
	G_{i} & = g R_{i} & 1 \leq i \leq L-1,
\end{align*}
acting on the Hilbert space $\mathcal{H}_{\tilde{\lambda}}$ with $\tilde{\lambda}=(\lambda,...,\lambda)$, satisfy the relations
\begin{align*}
	G_{i}G_{i+1}G_{i} & = G_{i+1}G_{i}G_{i+1} 		& 1 \leq i \leq L-2, \\
	U_{i}U_{i\pm1}U_{i} & =  U_{i} 								& 1 \leq i,(i\pm1) \leq L-1, \\
	G_{i}G_{i\pm1}U_{i} & = U_{i\pm1}G_{i}G_{i\pm1} = U_{i\pm1}U_{i} 				& 1 \leq i,(i\pm1) \leq L-1, \\
	G_{i\pm1}U_{i}G_{i\pm1} & = G_{i}^{-1}U_{i\pm1}G_{i}^{-1} 									& 1 \leq i,(i\pm1) \leq L-1, \\
	U_{i}G_{i\pm1}U_{i} & = g U_{i} 							& 1 \leq i,(i\pm1) \leq L-1, \\
	G_{i} U_{i} & = U_{i} G_{i} = g^{-1} U_{i} 															& 1 \leq i \leq L-1, \\
	U_{i}^{2} & = d U_{i} 												& 1 \leq i \leq L-1, \\
	G_{i}G_{i'} & =  G_{i'}G_{i} 									& 1 \leq i \leq i'-1 \leq L-2, \\
	U_{i}U_{i'} & =  U_{i'}U_{i} 									& 1 \leq i \leq i'-1 \leq L-2. 
\end{align*}
\end{thm}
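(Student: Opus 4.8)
The plan is to obtain every displayed identity by rescaling the relations already assembled in Lemma \ref{lemBraidaTLop} together with the braid relations (\ref{eqnBraid})--(\ref{eqnBraidCom}) and the local, orthogonal and idempotent properties of the $p_i^{(\nu)}$; the only genuinely new ingredient is a scalar identity linking the Temperley--Lieb normalisation to the braiding eigenvalue. With $U_i=d\,p_i^{(\nu)}$ and $G_i=g\,R_i$, I would first clear the relations that are immediate: $G_iG_{i+1}G_i=G_{i+1}G_iG_{i+1}$ is $g^3$ times (\ref{eqnBraid}); $G_iG_{i'}=G_{i'}G_i$ and $U_iU_{i'}=U_{i'}U_i$ for $i'\geq i+2$ are (\ref{eqnBraidCom}) and the commutativity of distant projectors rescaled; $U_i^2=d^2(p_i^{(\nu)})^2=d\,U_i$ by idempotency; $G_iU_i=U_iG_i=gd\,R_ip_i^{(\nu)}=gd\,R^{\lambda\lambda}_\nu p_i^{(\nu)}=g^{-1}U_i$ using $R_ip_i^{(\nu)}=R^{\lambda\lambda}_\nu p_i^{(\nu)}$ together with $g^{-2}=R^{\lambda\lambda}_\nu$; and $U_iG_{i\pm1}U_i=d^2g\,p_i^{(\nu)}R_{i\pm1}p_i^{(\nu)}=d^2g\,d^{-1}p_i^{(\nu)}=g\,U_i$ from the relation $p_i^{(\nu)}R_{i\pm1}p_i^{(\nu)}=d^{-1}p_i^{(\nu)}$ of Lemma \ref{lemBraidaTLop} and the definition of $d$.

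The crux is the scalar identity $d^{-2}=c$, where $c:=(F^{\lambda\lambda\lambda}_{\phi_\nu(\lambda)})^\nu_\nu(\bar F^{\lambda\lambda\lambda}_{\phi_\nu(\lambda)})^\nu_\nu$ is the constant that, by Corollary \ref{corUsualTL} (that is, Corollary \ref{CorOneDProjOpRel} specialised to $\nu'=\nu$), controls $p_i^{(\nu)}p_{i\pm1}^{(\nu)}p_i^{(\nu)}=c\,p_i^{(\nu)}$. This is nowhere assumed and must be read off the braided data. I would multiply together the two hexagon-rewritten expressions for $d^{-1}$ recorded immediately after Lemma \ref{lemBraidaTLop}, namely $d^{-1}=R^{\nu\lambda}_{\phi_\nu(\lambda)}(F^{\lambda\lambda\lambda}_{\phi_\nu(\lambda)})^\nu_\nu\bar R^{\lambda\lambda}_\nu$ and $d^{-1}=R^{\lambda\lambda}_\nu(\bar F^{\lambda\lambda\lambda}_{\phi_\nu(\lambda)})^\nu_\nu\bar R^{\lambda\nu}_{\phi_\nu(\lambda)}$. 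All factors lie in the commutative ring $\textbf{R}$ and so may be reordered, and axiom R.4 collapses the braiding factors: $R^{\nu\lambda}_{\phi_\nu(\lambda)}\bar R^{\lambda\nu}_{\phi_\nu(\lambda)}=N_{\nu\lambda}^{\phi_\nu(\lambda)}=1$ and $R^{\lambda\lambda}_\nu\bar R^{\lambda\lambda}_\nu=N_{\lambda\lambda}^\nu=1$, the latter equalling $1$ because the existence of $d\in\textbf{R}^\times$ already forces $N_{\lambda\lambda}^\nu=1$ through the zero condition F.1. What remains is precisely $c$, giving $d^{-2}=c$.

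With $d^{-2}=c$ in hand the remaining Temperley--Lieb and chained relations follow. The Temperley--Lieb relation is $U_iU_{i\pm1}U_i=d^3p_i^{(\nu)}p_{i\pm1}^{(\nu)}p_i^{(\nu)}=d^3c\,p_i^{(\nu)}=d\,p_i^{(\nu)}=U_i$. For the chained relation I would use the line $R_iR_{i\pm1}p_i^{(\nu)}=p_{i\pm1}^{(\nu)}R_iR_{i\pm1}=c^{-1}R^{\lambda\lambda}_\nu d^{-1}p_{i\pm1}^{(\nu)}p_i^{(\nu)}$ of Lemma \ref{lemBraidaTLop}: rescaling gives $G_iG_{i\pm1}U_i=g^2d\,R_iR_{i\pm1}p_i^{(\nu)}=c^{-1}p_{i\pm1}^{(\nu)}p_i^{(\nu)}=d^2p_{i\pm1}^{(\nu)}p_i^{(\nu)}=U_{i\pm1}U_i$ (using $g^2R^{\lambda\lambda}_\nu=1$ and $c^{-1}=d^2$), and the parallel equality $U_{i\pm1}G_iG_{i\pm1}=U_{i\pm1}U_i$ comes identically from the middle expression $p_{i\pm1}^{(\nu)}R_iR_{i\pm1}$ of that same line.

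The last relation $G_{i\pm1}U_iG_{i\pm1}=G_i^{-1}U_{i\pm1}G_i^{-1}$ has no direct counterpart in Lemma \ref{lemBraidaTLop}, and I would derive it formally from the relations already proved. Since $G_i$ is invertible it is equivalent to $G_iG_{i+1}U_iG_{i+1}G_i=U_{i+1}$ (the $+$ case; the other is symmetric). Applying the chained relation in its two index-shifted forms, $G_iG_{i+1}U_i=U_{i+1}U_i$ and $U_iG_{i+1}G_i=U_iU_{i+1}$, reduces the left side to $U_{i+1}U_iU_{i+1}$, which equals $U_{i+1}$ by the Temperley--Lieb relation. I expect the single real obstacle to be the identity $d^{-2}=c$: it is the one point where the two hypotheses of the theorem interlock with the separately defined Temperley--Lieb constant, and it hinges on pairing the hexagon forms of $d^{-1}$ so that R.4 cancels all braiding factors and leaves exactly $c$. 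Everything else is rescaling bookkeeping on top of Lemma \ref{lemBraidaTLop}.
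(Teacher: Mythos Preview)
Your proposal is correct and follows essentially the same route as the paper: identify the scalar identity $d^{-2}=\left(F^{\lambda\lambda\lambda}_{\phi_{\nu}(\lambda)}\right)^{\nu}_{\nu}\left(\bar F^{\lambda\lambda\lambda}_{\phi_{\nu}(\lambda)}\right)^{\nu}_{\nu}$ as the key point, then read off every relation from Lemma~\ref{lemBraidaTLop}, Corollary~\ref{corUsualTL}, and the braid equations (\ref{eqnBraid})--(\ref{eqnBraidCom}). Your treatment is in fact more explicit than the paper's in two places: you spell out how $d^{-2}=c$ follows by pairing two of the hexagon rewrites listed after Lemma~\ref{lemBraidaTLop} and collapsing the braiding factors via R.4, and you give a clean formal derivation of $G_{i\pm1}U_iG_{i\pm1}=G_i^{-1}U_{i\pm1}G_i^{-1}$ from the already-established chained and Temperley--Lieb relations, whereas the paper simply asserts that all relations follow.
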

\begin{proof}
We start by first noting that
\begin{align*}
 d^{-2} 
 & = \left[\sum_{\upsilon\in\mathcal{I}} \left(F^{\lambda\lambda\lambda}_{\phi_{\nu}(\lambda)}\right)^{\upsilon}_{\nu}  R^{\lambda\lambda}_{\upsilon}  \left(\bar{F}^{\lambda\lambda\lambda}_{\phi_{\nu}(\lambda)}\right)^{\nu}_{\upsilon}\right]^{2} 
 = \left(F^{\lambda\lambda\lambda}_{\phi_{\nu}(\lambda)}\right)^{\nu}_{\nu} \left(\bar{F}^{\lambda\lambda\lambda}_{\phi_{\nu}(\lambda)}\right)^{\nu}_{\nu}.
\end{align*}
Subsequently, all the relations follow from Lemma \ref{lemBraidaTLop}, Corollary \ref{corUsualTL}, and Equations (\ref{eqnBraid}) and (\ref{eqnBraidCom}).
\end{proof}

\noindent
The above theorem provides conditions that imply that all bar one of the relations required in the Birman-Murakami-Wenzl algebra. The remaining relation is
\begin{align} \label{eqnBMWEigRel}
	G_{i} - G_{i}^{-1} & = m (I - U_{i}) & 1 \leq i \leq L-1.
\end{align}
We note that all the operators in the above relation commute and hence can be simultaneously diagonalised. Furthermore, the $U_{i}$ only have two eigenvalues, namely, 0 and $d$. It follows that Equation (\ref{eqnBMWEigRel}) is a condition on the eigenvalues of the $G_{i}$, specifically that there are at most three and they are not independent. Expanding the operators $U_{i}$ and $G_{i}$ in terms of projection operators leads us to conclude that Equation (\ref{eqnBMWEigRel}) is satisfied if and only if
\begin{align*}
	gR^{\lambda\lambda}_{\upsilon} - (gR^{\lambda\lambda}_{\upsilon})^{-1} & = m & \forall \upsilon \, \mbox{s.t.}\, N_{\lambda\lambda}^{\upsilon}=1, \, \upsilon \neq \nu \\
	g^{-1}-g & = m(1-d),
\end{align*}
with $d$ and $g$ being the values given in Theorem \ref{ThmBMW}.

\section{Examples and Limitations}
In this section we provide examples of either fusion categories or representations of Hopf algebras in which the one can use the results of the previous section to show the existence of operators satisfying Temperley-Lieb like relations. For the examples presented only the fusion rules or the dimension of the representations will be given, this is to demonstrate simplicity of the application of the Corollary \ref{CorOneDProjOpRel} and \ref{corUsualTL}.

The examples will also connect the results of this paper with known cases mentioned previously as well as show the potential limitations of the results.

\subsection{$U_{q}(su(2))$ and $su(2)_{k}$}
The $su(2)_{k}$ fusion categories consists of the set of objects $\mathcal{I}=\{0,\frac{1}{2},\dots,\frac{k}{2}\}$ along with the fusion rules
\begin{align*}
  a \tp b & \cong \bigoplus_{c=|a-b|}^{\mbox{min}(a+b,k-a-b)} c
\end{align*}
where $c-a-b$ must always be an integer. It is know that there are compatible F-moves, see \cite{BonderThesis2007}. If one takes $k$ to $\infty$ the $su(2)_{k}$ fusion categories correspond to the category of irreducible spin representations of the Hopf algebra $U_{q}(su(2))$ with generic $q$. It is easy to verify that the $0\in\mathcal{I}$ is an identity object and every object is its own dual,
\begin{align*}
  0 \tp a & \cong a \cong a \tp 0, \\
  a \tp a & \cong 0 \oplus \cdots.
\end{align*}
Thus if one considers $a\tp a\tp \cdots \tp a$, it follows from Corollary \ref{corUsualTL} that appropriately scaled two-site projection operators from $a\tp a$ to $0$ must satisfy Temperley-Lieb relations. For $a=\frac{1}{2}$ and $k=\infty$, i.e. $U_{q}(su(2))$, this is precisely Schur-Weyl duality and for $a=\frac{1}{2}$ and $k<\infty$ this results is known from Pasquier's construction of integrable models from A-D-E diagrams \cite{Pasquier1987a}.

\subsection{Group Hopf Algebras and Their Drinfeld Doubles}
Give a group $G$, one can consider two Hopf algebras either $H=\C G$ or $H=D(\C G)$ (the Drinfeld double of the group, see \cite{Gould1993} for a definition). Typically $\C G$ is treated as a subalgebra of $D(\C G)$, however, here they are treated separately for those unfamiliar Drinfeld doubles can ignore that case.

Consider all the finite irreducible matrix representations $\{\pi_{a}\}_{a\in\mathcal{I}}$ of $H$ with $V_{a}$ being the vector associated with $\pi_{a}$ and $d_{a}$ being its dimension. It follows that $\{\pi_{j}\}_{j\in\mathcal{I}}$ gives rise to a semi-simple tensor category. Given the tuple $\tilde{\lambda}=(\lambda_{1},...,\lambda_{ L})$ one has the has two-site projection operators
\begin{align*}
  p_{i}^{(\nu)} & = I_{\lambda_{1}} \tp \cdots \tp I_{\lambda_{i-1}} \tp p \tp I_{\lambda_{i+2}} \tp \cdots \tp I_{\lambda_{L}}, & \mbox{where}\\
  p & = \frac{d_{\nu}}{|G|}  \begin{dcases}
	  \sum_{g\in G} \mbox{tr}[\pi_{\nu}(g^{-1})]\, \pi_{\lambda_{i}}(g) \tp \pi_{\lambda_{i+1}}(g), & H=\C G, \\
	  \sum_{g,h,k\in G} \mbox{tr}[\pi_{\nu}(h^{*}g^{-1})]\, \pi_{\lambda_{i}}(g(k^{-1}h)^{*} \tp \pi_{\lambda_{i+1}}(gk^{*}), & H=D(\C G),
	\end{dcases}
\end{align*}
$|G|$ is the order of the group and $I_{\lambda}$ is the identity operator on space $V_{\lambda}$. In the case the tensor category is multiplicity free and $d_{\nu}=1$ then we apply Corollary \ref{CorOneDProjOpRel} and obtain
\begin{align*}
  p_{i}^{(\nu)} p_{i\pm1}^{(\nu')} p_{i}^{(\nu)} & = p_{i}^{(\nu)}.
\end{align*}
This is an equation on the space $V_{\lambda_{1}} \tp \cdots \tp V_{\lambda_{L}}$, rather than $\mathcal{H}_{\tilde{\lambda}}$. The two spaces are related through a face-vertex correspondence.

Gould \cite{Gould1993} presented a similar result. The key differences being that Gould required $\pi_{\nu}=\pi_{\nu'}$ to be the trivial representation and $\tilde{\lambda}$ to be homogeneous and self-dual but did not require fusion category of irreducible representations to be multiplicity free. This gives a indication that it might be possible to extend the results to systems which are not multiplicity free.

\subsection{A Deceptive Example Involving an Identity}
In any semi-simple tensor category system with an identity and duals a projection operator onto a one-dimensional object can be mapped to a projection operator onto the identity object in an isomorphic space. Specifically, for a fixed $\tilde{\lambda}$, $i$ and one-dimensional object $\nu$ there is always an isomorphism $f$ and sequence $\tilde{\lambda}'$ with the property
\begin{align*}
  f:\mathcal{H}_{\tilde{\lambda}} \rightarrow \mathcal{H}_{\tilde{\lambda}'} \quad \mbox{such that} \quad f(p_{i}^{(\nu)}) = p_{i}^{(1)},
\end{align*}
where $1$ is the identity object. This is a consequence of the dual of $\nu$ also being one-dimensional. From this point of view there is no distinction between projection operators onto the identity or other one-dimensional objects. In fact, if for a $\tilde{\lambda}$ there exists a set of one-dimensional objects $\{\nu_{i}\}$ such that $p_{i}^{(\nu_{i})} \neq 0$, then there exists a map
\begin{align*}
  f:\mathcal{H}_{\tilde{\lambda}} \rightarrow \mathcal{H}_{(\lambda_{1},\bar{\lambda}_{1},\lambda_{1},\bar{\lambda}_{1},\dots)} \quad \mbox{such that} \quad f(p_{i}^{(\nu_{i})}) = p_{i}^{(1)},
\end{align*}
where $\bar{\lambda}_{1} = \iota(\lambda_{1})$ is the dual of $\lambda_{1}$. This can be proved by considering starting with a chain of length $L=2$ and using induction.

This leads us to search for tensor systems which lacks an identity and duals. A simple example tensor system is readily available:
\begin{table}[ht]
\begin{center}
\begin{tabular}{|c|c|c|c|c|c|c|} \hline
	$\tp$ & $0$ & $1$ & $\tau$ \\ \hline
	$0$ & $1$ & $1$ & $\tau$ \\ \hline
	$1$ & $1$ & $1$ & $\tau$ \\ \hline
	$\tau$ & $\tau$ & $\tau$ & $1\oplus\tau$ \\ \hline
\end{tabular}
\end{center}
\end{table}

\noindent
The $F$-moves can be calculated by solving the pentagon relations. This example has no identity and no dual objects yet still allows for a non-trivial application of Corollary \ref{CorOneDProjOpRel}, consider $\tilde{\lambda}=(\tau,\dots,\tau)$. However, it becomes apparent that the objects $\{1,\tau\}$ leads to a tensor system isomorphic to the integer subsector of $su(2)_{3}$, i.e. Fibonacci anyons, with $1$ being the identity object.

\subsection{$\C$ and Homogeneity of the Constants}
In the special case where $\textbf{R} = \C$ and we just consider relationships between projection operators onto one-dimensional objects then we find the constants appearing in the Temperley-Lieb like relations are constrained. Consider $\tilde{\lambda}$ and a set of one-dimensional objects $\{\nu_{i}\}$ such that $p_{i}^{(\nu_{i})} \neq 0$. It follows that
\begin{align*}
  p_{i} & = c_{i}^{-1} p_{i}p_{i+1}p_{i} = c_{i-1}^{-1} p_{i}p_{i-1}p_{i} \\
\end{align*}
where
\begin{align*}
  p_{i} & = p_{i}^{(\nu_{i})} \\
  c_{i} & = \left(F^{\lambda_{i}\lambda_{i+1}\lambda_{i+2}}_{\phi^{l}_{\nu_{i+1}}(\lambda_{i+2})}\right)^{\nu_{i}}_{\nu_{i+1}} \left(\bar{F}^{\lambda_{i}\lambda_{i+1}\lambda_{i+2}}_{\phi^{l}_{\nu_{i+1}}(\lambda_{i+2})}\right)^{\nu_{i+1}}_{\nu_{i}}.
\end{align*}
Using this we find that 
\begin{align*}
  0 & = p_{i}p_{i+2}p_{i+1}p_{i+2}p_{i} - p_{i+2}p_{i}p_{i+1}p_{i}p_{i+2} = c_{i+1}p_{i}p_{i+2}p_{i} - c_{i}p_{i+2}p_{i}p_{i+2}  = (c_{i+1} - c_{i}) p_{i}p_{i+2}, 
\end{align*}
implying that all the $c_{i}$ is in actually independent of $i$. It remains to be seen if there exist examples of tensor systems with some general ring $\textbf{R}$ that allow different constants $c_{i}$ between projection operators onto one-dimensional objects.

\section{Conclusion}
We have provided the necessary and sufficient conditions for two-site projection operators satisfy Temperley-Lieb like relations in a multiplicity semi-simple tensor systems. Using Yamagami's reconstruction theorem and Pasquier's face-vertex correspondence one recovers many known examples of representations of the Temperley-Lieb algebra appearing in integrable systems as special cases our theorem. Despite our broad and abstract approach there is still a significant opportunity to generalise the theorem further by removing the multiplicity free conditions and checking if similar necessary and sufficient conditions exist. Likewise, it may be possible to perform similar calculations to find projection operators satisfying the Hecke relations.

\subsection*{Acknowledgements}
This manuscript was completed over an excessively long period, during which I also was based at the University of Leeds and Universit\'e Pierre-et-Marie-Curie. I would also like to thank Zolt\'an K\'ad\'ar and Paul Martin who researched this topic with me.


\newpage
\appendix

\ifPrivateMode
\section{Misc}
\subsection{Identities}
Here we list alternate forms of the pentagon equation:
\begin{align*}
	\sum_{k} \left(F^{ahd}_{e}\right)^{g}_{k} \left(F^{bcd}_{k}\right)^{h}_{l} \left(\bar{F}^{abl}_{e}\right)^{k}_{f} & =  \left(\bar{F}^{abc}_{g}\right)^{h}_{f} \left(F^{fcd}_{e}\right)^{g}_{l} \hspace{1cm} \star\\
	\sum_{k} \left(F^{abl}_{e}\right)^{f}_{k} \left(\bar{F}^{bcd}_{k}\right)^{l}_{h} \left(\bar{F}^{ahd}_{e}\right)^{k}_{g} & =  \left(\bar{F}^{fcd}_{e}\right)^{l}_{g} \left(F^{abc}_{g}\right)^{f}_{h} \hspace{1cm} \star\\
	\sum_{l}  \left(F^{bcd}_{k}\right)_{l}^{h}\left(\bar{F}^{abl}_{e}\right)^{k}_{f} \left(\bar{F}^{fcd}_{e}\right)^{l}_{g} & =  \left(\bar{F}^{ahd}_{e}\right)^{k}_{g} \left(\bar{F}^{abc}_{g}\right)^{h}_{f} \hspace{1cm} \star\\
	\sum_{g} \left(\bar{F}^{fcd}_{e}\right)^{l}_{g} \left(F^{abc}_{g}\right)^{f}_{h} \left(F^{ahd}_{e}\right)^{g}_{k} & =  \left(F^{abl}_{e}\right)^{f}_{k} \left(\bar{F}^{bcd}_{k}\right)^{l}_{h} \hspace{1cm} \star \\
	\sum_{l\in\mathcal{I}}  \left(F^{fcd}_{e}\right)^{g}_{l} \left(F^{abl}_{e}\right)^{f}_{k}\left(\bar{F}^{bcd}_{k}\right)^{l}_{h} & =  \left(F^{abc}_{g}\right)^{f}_{h} \left(F^{ahd}_{e}\right)^{g}_{k} \hspace{1cm} \star \\
	\sum_{f}  \left(\bar{F}^{abl}_{e}\right)^{k}_{f} \left(\bar{F}^{fcd}_{e}\right)^{l}_{g} \left(F^{abc}_{g}\right)^{f}_{h} & =  \left(\bar{F}^{bcd}_{k}\right)^{l}_{h} \left(\bar{F}^{ahd}_{e}\right)^{k}_{g} \\
	\sum_{f} \left(\bar{F}^{abc}_{g}\right)^{h}_{f} \left(F^{fcd}_{e}\right)^{g}_{l} \left(F^{abl}_{e}\right)^{f}_{k} & =  \left(F^{ahd}_{e}\right)^{g}_{k} \left(F^{bcd}_{k}\right)^{h}_{l} \\
	\sum_{h} \left(\bar{F}^{bcd}_{k}\right)^{l}_{h} \left(\bar{F}^{ahd}_{e}\right)^{k}_{g} \left(\bar{F}^{abc}_{g}\right)^{h}_{f} & =   \left(\bar{F}^{abl}_{e}\right)^{k}_{f} \left(\bar{F}^{fcd}_{e}\right)^{l}_{g} \\
	\sum_{f} \left(\bar{F}^{abc}_{g}\right)^{h}_{f} \left(F^{fcd}_{e}\right)^{g}_{l} \left(F^{abl}_{e}\right)^{f}_{k} & =  \left(F^{ahd}_{e}\right)^{g}_{k} \left(F^{bcd}_{k}\right)^{h}_{l}
\end{align*}
The $\star$ means I have used it in the calculations below (I may have forgotten to star some). Here we list some alternate forms of the hexagon relation:
\begin{align*}
	\sum_{f\in\mathcal{I}}	\left(\bar{F}^{abc}_{d}\right)^{g}_{f} R^{cf}_{d} \left(\bar{F}^{cab}_{d}\right)^{f}_{e} &   R^{cb}_{g}  \left(\bar{F}^{acb}_{d}\right)^{g}_{e} R^{ca}_{e}  \\
	\sum_{f\in\mathcal{I}}  \left(\bar{F}^{abc}_{d}\right)^{g}_{f} \bar{R}^{cf}_{d} \left(\bar{F}^{cab}_{d}\right)^{f}_{e} 
&  \bar{R}^{cb}_{g} \left(\bar{F}^{acb}_{d}\right)^{g}_{e} \bar{R}^{ca}_{e} \\
	\sum_{e\in\mathcal{I}} \left(\bar{F}^{cab}_{d}\right)^{f}_{e} R^{ac}_{e}\left(F^{acb}_{d}\right)^{e}_{g} R^{bc}_{g} = R^{fc}_{d} \left(F^{abc}_{d}\right)^{f}_{g} \hspace{1cm} \star \\
	\sum_{g\in\mathcal{I}}  R^{ac}_{e}\left(F^{acb}_{d}\right)^{e}_{g} R^{bc}_{g} \left(\bar{F}^{abc}_{d}\right)^{g}_{f} &  \left(F^{cab}_{d}\right)^{e}_{f} R^{fc}_{d}  \hspace{1cm} \star \\
	\sum_{e\in\mathcal{I}} R^{cb}_{g}  \left(\bar{F}^{acb}_{d}\right)^{g}_{e} R^{ca}_{e} \left(F^{cab}_{d}\right)^{e}_{f} &  \left(\bar{F}^{abc}_{d}\right)^{g}_{f} R^{cf}_{d}  \hspace{1cm} \star \\
\end{align*}
\fi

\section{Proofs and calculations}

\subsection{Preliminary calculations}
Here we compute a number of relations that are used repeatedly in various proofs.
\begin{calc} \label{calcpppV1}
We consider operators acting on the space $\mathcal{H}_{\tilde{\lambda}}$ where $\tilde{\lambda}=(\lambda_{1},...,\lambda_{L})$. We find that for $\mu,\mu'''\in \mathcal{B}_{\tilde{\lambda}}$ and $1\leq i \leq L-2$ we have:
\begin{align*}
	& \bra{\mu'''} p_{i}^{(\nu_{3})}p_{i+1}^{(\nu_{2})}p_{i}^{(\nu_{1})} \ket{\mu} \\
	& =  \sum_{\mu',\mu''\in\mathcal{B}_{\tilde{\lambda}}} \bra{\mu'''} p_{i}^{(\nu_{3})}\ket{\mu''}\bra{\mu''}p_{i+1}^{(\nu_{2})}\ket{\mu'}\bra{\mu'}p_{i}^{(\nu_{1})} \ket{\mu}\\
	& = \sum_{\mu',\mu''\in\mathcal{B}_{\tilde{\lambda}}} \delta_{\mu_{i+1}}^{\mu_{i+1}'}\delta_{\mu_{i}'}^{\mu_{i}''}\delta_{\mu_{i+1}''}^{\mu_{i+1}'''} \, \left[ \prod_{j\neq i,i+1} \delta_{\mu_{j}}^{\mu_{j}'}\delta_{\mu_{j}}^{\mu_{j}''}\delta_{\mu_{j}}^{\mu_{j}'''} \right] \\
	& \quad\quad \times \left(\bar{F}^{\mu_{i-1}\lambda_{i}\lambda_{i+1}}_{\mu_{i+1}'''}\right)_{\mu_{i}'''}^{\nu_{3}} 
	\left(F^{\mu_{i-1}\lambda_{i}\lambda_{i+1}}_{\mu_{i+1}'''}\right)^{\mu_{i}'}_{\nu_{3}} 
	\left(\bar{F}^{\mu_{i}'\lambda_{i+1}\lambda_{i+2}}_{\mu_{i+2}}\right)_{\mu_{i+1}'''}^{\nu_{2}} 
	\left(F^{\mu_{i}'\lambda_{i+1}\lambda_{i+2}}_{\mu_{i+2}}\right)^{\mu_{i+1}}_{\nu_{2}}
	\left(\bar{F}^{\mu_{i-1}\lambda_{i}\lambda_{i+1}}_{\mu_{i+1}}\right)_{\mu_{i}'}^{\nu_{1}} 
	\left(F^{\mu_{i-1}\lambda_{i}\lambda_{i+1}}_{\mu_{i+1}}\right)^{\mu_{i}}_{\nu_{1}}	\\
	& = \sum_{\upsilon_{1}\in\mathcal{I}} \left[ \prod_{j\neq i,i+1} \delta_{\mu_{j}}^{\mu_{j}'''} \right] 
	\left(\bar{F}^{\mu_{i-1}\lambda_{i}\lambda_{i+1}}_{\mu_{i+1}'''}\right)_{\mu_{i}'''}^{\nu_{3}} 
	\left(\bar{F}^{\upsilon_{1}\lambda_{i+1}\lambda_{i+2}}_{\mu_{i+2}}\right)_{\mu_{i+1}'''}^{\nu_{2}} \\
	& \quad \quad \times
	\left(F^{\mu_{i-1}\lambda_{i}\lambda_{i+1}}_{\mu_{i+1}'''}\right)^{\upsilon_{1}}_{\nu_{3}} 
	\left(\bar{F}^{\mu_{i-1}\lambda_{i}\lambda_{i+1}}_{\mu_{i+1}}\right)_{\upsilon_{1}}^{\nu_{1}} 
	\left(F^{\upsilon_{1}\lambda_{i+1}\lambda_{i+2}}_{\mu_{i+2}}\right)^{\mu_{i+1}}_{\nu_{2}}
	\left(F^{\mu_{i-1}\lambda_{i}\lambda_{i+1}}_{\mu_{i+1}}\right)^{\mu_{i}}_{\nu_{1}} \\
	& = \sum_{\upsilon_{3}\in\mathcal{I}} \left[ \prod_{j\neq i,i+1} \delta_{\mu_{j}}^{\mu_{j}'''} \right] 
	\left(F^{\lambda_{i}\lambda_{i+1}\lambda_{i+2}}_{\upsilon_{3}}\right)^{\nu_{1}}_{\nu_{2}} 
	\left(\bar{F}^{\lambda_{i}\lambda_{i+1}\lambda_{i+2}}_{\upsilon_{3}}\right)^{\nu_{2}}_{\nu_{3}}	\\
	& \quad\quad \times \left(\bar{F}^{\mu_{i-1}\lambda_{i}\lambda_{i+1}}_{\mu_{i+1}'''}\right)_{\mu_{i}'''}^{\nu_{3}} 
	\left(F^{\mu_{i-1}\lambda_{i}\lambda_{i+1}}_{\mu_{i+1}}\right)^{\mu_{i}}_{\nu_{1}} 
	\left(F^{\mu_{i-1}\nu_{1}\lambda_{i+2}}_{\mu_{i+2}}\right)^{\mu_{i+1}}_{\upsilon_{3}} 
	\left(\bar{F}^{\mu_{i-1}\nu_{3}\lambda_{i+2}}_{\mu_{i+2}}\right)^{\upsilon_{3}}_{\mu_{i+1}'''},
\end{align*}
where we have used the relations
\begin{align*}
	\sum_{\upsilon_{2}} \left(F^{\mu_{i-1}\nu_{1}\lambda_{i+2}}_{\mu_{i+2}}\right)^{\mu_{i+1}}_{\upsilon_{2}} \left(F^{\lambda_{i}\lambda_{i+1}\lambda_{i+2}}_{\upsilon_{2}}\right)^{\nu_{1}}_{\nu_{2}} \left(\bar{F}^{\mu_{i-1}\lambda_{i}\nu_{2}}_{\mu_{i+2}}\right)^{\upsilon_{2}}_{\upsilon_{1}} & = \left(\bar{F}^{\mu_{i-1}\lambda_{i}\lambda_{i+1}}_{\mu_{i+1}}\right)^{\nu_{1}}_{\upsilon_{1}} \left(F^{\upsilon_{1}\lambda_{i+1}\lambda_{i+2}}_{\mu_{i+2}}\right)^{\mu_{i+1}}_{\nu_{2}} \\
	\sum_{\upsilon_{3}} \left(F^{\mu_{i-1}\lambda_{i}\nu_{2}}_{\mu_{i+2}}\right)^{\upsilon_{1}}_{\upsilon_{3}} \left(\bar{F}^{\lambda_{i}\lambda_{i+1}\lambda_{i+2}}_{\upsilon_{3}}\right)^{\nu_{2}}_{\nu_{3}} \left(\bar{F}^{\mu_{i-1}\nu_{3}\lambda_{i+2}}_{\mu_{i+2}}\right)^{\upsilon_{3}}_{\mu_{i+1}'''} & = \left(\bar{F}^{\upsilon_{1}\lambda_{i+1}\lambda_{i+2}}_{\mu_{i+2}}\right)^{\nu_{2}}_{\mu_{i+1}'''} \left(F^{\mu_{i-1}\lambda_{i}\lambda_{i+1}}_{\mu_{i+1}'''}\right)^{\upsilon_{1}}_{\nu_{3}} \\
	\sum_{\upsilon_{1}}\left(\bar{F}^{\mu_{i-1}\lambda_{i}\nu_{2}}_{\mu_{i+2}}\right)^{\upsilon_{2}}_{\upsilon_{1}}
	\left(F^{\mu_{i-1}\lambda_{i}\nu_{2}}_{\mu_{i+2}}\right)^{\upsilon_{1}}_{\upsilon_{3}} & = \delta_{\upsilon_{2}}^{\upsilon_{3}} (1-\delta_{N_{\lambda_{i}\nu_{2}}^{\upsilon_{3}}}^{0}) (1-\delta_{N_{\mu_{i-1}\upsilon_{3}}^{\mu_{i+2}}}^{0}).
\end{align*}
\end{calc}

\begin{calc} \label{calcpppV2}
We consider operators acting on the space $\mathcal{H}_{\tilde{\lambda}}$ where $\tilde{\lambda}=(\lambda_{1},...,\lambda_{L})$. We find that for $\mu,\mu'''\in \mathcal{B}_{\tilde{\lambda}}$ and $2\leq i \leq L-1$ we have:
\begin{align*}
	& \bra{\mu'''} p_{i}^{(\nu_{3})}p_{i-1}^{(\nu_{2})}p_{i}^{(\nu_{1})} \ket{\mu} \\
	& =  \sum_{\mu',\mu''\in\mathcal{B}_{\tilde{\lambda}}} \bra{\mu'''} p_{i}^{(\nu_{3})}\ket{\mu''}\bra{\mu''}p_{i-1}^{(\nu_{2})}\ket{\mu'}\bra{\mu'}p_{i}^{(\nu_{1})} \ket{\mu}\\
	& = \sum_{\mu',\mu''\in\mathcal{B}_{\tilde{\lambda}}} \delta_{\mu_{i-1}}^{\mu_{i-1}'}\delta_{\mu_{i}'}^{\mu_{i}''}\delta_{\mu_{i-1}''}^{\mu_{i-1}'''} \, \left[ \prod_{j\neq i-1,i} \delta_{\mu_{j}}^{\mu_{j}'}\delta_{\mu_{j}}^{\mu_{j}''}\delta_{\mu_{j}}^{\mu_{j}'''} \right] \\
	& \quad\quad \times \left(\bar{F}^{\mu_{i-1}''\lambda_{i}\lambda_{i+1}}_{\mu_{i+1}''}\right)_{\mu_{i}'''}^{\nu_{3}} 
	\left(F^{\mu_{i-1}''\lambda_{i}\lambda_{i+1}}_{\mu_{i+1}''}\right)^{\mu_{i}''}_{\nu_{3}} 
	\left(\bar{F}^{\mu_{i-2}'\lambda_{i-1}\lambda_{i}}_{\mu_{i}'}\right)_{\mu_{i-1}''}^{\nu_{2}} 
	\left(F^{\mu_{i-2}'\lambda_{i-1}\lambda_{i}}_{\mu_{i}'}\right)^{\mu_{i-1}'}_{\nu_{2}}
	\left(\bar{F}^{\mu_{i-1}\lambda_{i}\lambda_{i+1}}_{\mu_{i+1}}\right)_{\mu_{i}'}^{\nu_{1}} 
	\left(F^{\mu_{i-1}\lambda_{i}\lambda_{i+1}}_{\mu_{i+1}}\right)^{\mu_{i}}_{\nu_{1}}	\\
	& = \sum_{\upsilon_{1}\in\mathcal{I}} \left[ \prod_{j\neq i-1,i} \delta_{\mu_{j}}^{\mu_{j}'''} \right] \left(\bar{F}^{\mu_{i-1}'''\lambda_{i}\lambda_{i+1}}_{\mu_{i+1}}\right)_{\mu_{i}'''}^{\nu_{3}} \left(F^{\mu_{i-1}\lambda_{i}\lambda_{i+1}}_{\mu_{i+1}}\right)^{\mu_{i}}_{\nu_{1}} \\
	& \quad\quad \times 
	\left(\bar{F}^{\mu_{i-2}\lambda_{i-1}\lambda_{i}}_{\upsilon_{1}}\right)_{\mu_{i-1}'''}^{\nu_{2}} 
	\left(F^{\mu_{i-1}'''\lambda_{i}\lambda_{i+1}}_{\mu_{i+1}}\right)^{\upsilon_{1}}_{\nu_{3}} 
	\left(\bar{F}^{\mu_{i-1}\lambda_{i}\lambda_{i+1}}_{\mu_{i+1}}\right)_{\upsilon_{1}}^{\nu_{1}}
	\left(F^{\mu_{i-2}\lambda_{i-1}\lambda_{i}}_{\upsilon_{1}}\right)^{\mu_{i-1}}_{\nu_{2}} \\
	& = \sum_{\upsilon_{3}\in\mathcal{I}} \left[ \prod_{j\neq i-1,i} \delta_{\mu_{j}}^{\mu_{j}'''} \right] 
	\left(\bar{F}^{\lambda_{i-1}\lambda_{i}\lambda_{i+1}}_{\upsilon_{3}}\right)^{\nu_{1}}_{\nu_{2}} 
	\left(F^{\lambda_{i-1}\lambda_{i}\lambda_{i+1}}_{\upsilon_{3}}\right)^{\nu_{2}}_{\nu_{3}}  \\
	& \quad\quad \times 
	\left(\bar{F}^{\mu_{i-1}'''\lambda_{i}\lambda_{i+1}}_{\mu_{i+1}}\right)_{\mu_{i}'''}^{\nu_{3}} 
	\left(F^{\mu_{i-1}\lambda_{i}\lambda_{i+1}}_{\mu_{i+1}}\right)^{\mu_{i}}_{\nu_{1}} 
	\left(F^{\mu_{i-2}\lambda_{i-1}\nu_{1}}_{\mu_{i+1}}\right)^{\mu_{i-1}}_{\upsilon_{3}} 
	\left(\bar{F}^{\mu_{i-2}\lambda_{i-1}\nu_{3}}_{\mu_{i+1}}\right)^{\upsilon_{3}}_{\mu_{i-1}'''}  
\end{align*}
where we have used the relations
\begin{align*}
	\sum_{\upsilon_{2}} \left(F^{\mu_{i-2}\nu_{2}\lambda_{i+1}}_{\mu_{i+1}}\right)^{\upsilon_{1}}_{\upsilon_{2}} \left(F^{\lambda_{i-1}\lambda_{i}\lambda_{i+1}}_{\upsilon_{2}}\right)^{\nu_{2}}_{\nu_{3}} \left(\bar{F}^{\mu_{i-2}\lambda_{i-1}\nu_{3}}_{\mu_{i+1}}\right)^{\upsilon_{2}}_{\mu_{i-1}'''} & = \left(\bar{F}^{\mu_{i-2}\lambda_{i-1}\lambda_{i}}_{\upsilon_{1}}\right)^{\nu_{2}}_{\mu_{i-1}'''} \left(F^{\mu_{i-1}'''\lambda_{i}\lambda_{i+1}}_{\mu_{i+1}}\right)^{\upsilon_{1}}_{\nu_{3}} \\
	\sum_{\upsilon_{3}} \left(F^{\mu_{i-2}\lambda_{i-1}\nu_{1}}_{\mu_{i+1}}\right)^{\mu_{i-1}}_{\upsilon_{3}} \left(\bar{F}^{\lambda_{i-1}\lambda_{i}\lambda_{i+1}}_{\upsilon_{3}}\right)^{\nu_{1}}_{\nu_{2}} \left(\bar{F}^{\mu_{i-2}\nu_{2}\lambda_{i+1}}_{\mu_{i+1}}\right)^{\upsilon_{3}}_{\upsilon_{1}} & = \left(\bar{F}^{\mu_{i-1}\lambda_{i}\lambda_{i+1}}_{\mu_{i+1}}\right)^{\nu_{1}}_{\upsilon_{1}} \left(F^{\mu_{i-2}\lambda_{i-1}\lambda_{i}}_{\upsilon_{1}}\right)^{\mu_{i-1}}_{\nu_{2}} \\
	\sum_{\upsilon_{1}} \left(\bar{F}^{\mu_{i-2}\nu_{2}\lambda_{i+1}}_{\mu_{i+1}}\right)^{\upsilon_{3}}_{\upsilon_{1}}
	\left(F^{\mu_{i-2}\nu_{2}\lambda_{i+1}}_{\mu_{i+1}}\right)^{\upsilon_{1}}_{\upsilon_{2}}  & = \delta_{\upsilon_{2}}^{\upsilon_{3}} (1-\delta_{N_{\nu_{2}\lambda_{i+1}}^{\upsilon_{3}}}^{0}) (1-\delta_{N_{\mu_{i-2}\upsilon_{3}}^{\mu_{i+1}}}^{0}).
\end{align*}	
\end{calc}

\subsection{Proof of Theorem \ref{ThmGenTLlikeRel}} \label{SecProofGenTLlikeRel}
\begin{calc} \label{calcTLlikeRel1}
We consider operators acting on the space $\mathcal{H}_{\tilde{\lambda}}$ where $\tilde{\lambda}=(\lambda_{1},...,\lambda_{L})$ and $|\mathcal{B}_{\tilde{\lambda}}|\neq 0$, along with some fixed $1\leq i \leq L-2$ and elements $\nu,\nu'\in\mathcal{I}$. Consider the scenario that there exists $c\in\textbf{R}$ such 
\begin{align*}
	\bra{\mu'} p_{i}^{(\nu)}p_{i+1}^{(\nu')}p_{i}^{(\nu)} \ket{\mu} = c \bra{\mu'} p_{i}^{(\nu)} \ket{\mu}
\end{align*}
for all $\mu,\mu'\in \mathcal{B}_{\tilde{\lambda}}$. Using Calculation \ref{calcpppV1} and inversion relations we are able to see that the we must have
\begin{align*}
	c \delta_{\mu_{i+1}}^{\mu_{i+1}'} N_{\mu_{i+1}\lambda_{i+2}}^{\mu_{i+2}} N_{\lambda_{i}\lambda_{i+1}}^{\nu} N_{\mu_{i-1}\nu}^{\mu_{i+1}}
	= 
	\sum_{\upsilon\in\mathcal{I}} \left(F^{\lambda_{i}\lambda_{i+1}\lambda_{i+2}}_{\upsilon}\right)^{\nu}_{\nu'} 
	\left(\bar{F}^{\lambda_{i}\lambda_{i+1}\lambda_{i+2}}_{\upsilon}\right)^{\nu'}_{\nu}
	\left(F^{\mu_{i-1}\nu\lambda_{i+2}}_{\mu_{i+2}}\right)^{\mu_{i+1}}_{\upsilon}  
	\left(\bar{F}^{\mu_{i-1}\nu\lambda_{i+2}}_{\mu_{i+2}}\right)^{\upsilon}_{\mu_{i+1}'} 
\end{align*}
for all $\mu_{i-1},\mu_{i+1},\mu_{i+1}',\mu_{i+2} \in \mathcal{I}$. This can be further simplified to
\begin{align*}
	c N_{\lambda_{i}\lambda_{i+1}}^{\nu} N_{\nu\lambda_{i+2}}^{\upsilon_{1}} N_{\upsilon_{2}\upsilon_{1}}^{\upsilon_{3}}
	= N_{\upsilon_{2}\upsilon_{1}}^{\upsilon_{3}} \left(F^{\lambda_{i}\lambda_{i+1}\lambda_{i+2}}_{\upsilon_{1}}\right)^{\nu}_{\nu'} 	\left(\bar{F}^{\lambda_{i}\lambda_{i+1}\lambda_{i+2}}_{\upsilon_{1}}\right)^{\nu'}_{\nu},
\end{align*}
for all $\upsilon_{1},\upsilon_{2},\upsilon_{3}\in\mathcal{I}$ where $\sum_{\mu_{0}\in\mathcal{I}_{0}} N_{u_{0}\lambda_{1}\cdots\lambda_{i-1}}^{\upsilon_{2}} \geq 1$. \\

\noindent
On the other hand suppose there exists $c\in\textbf{R}$ such that
\begin{align*}
	c N_{\lambda_{i}\lambda_{i+1}}^{\nu} N_{\nu\lambda_{i+2}}^{\upsilon_{1}} N_{\upsilon_{2}\upsilon_{1}}^{\upsilon_{3}}
	= N_{\upsilon_{2}\upsilon_{1}}^{\upsilon_{3}} \left(F^{\lambda_{i}\lambda_{i+1}\lambda_{i+2}}_{\upsilon_{1}}\right)^{\nu}_{\nu'} 	\left(\bar{F}^{\lambda_{i}\lambda_{i+1}\lambda_{i+2}}_{\upsilon_{1}}\right)^{\nu'}_{\nu},
\end{align*}
for all $\upsilon_{1},\upsilon_{2},\upsilon_{3}\in\mathcal{I}$. It follows that for any $\mu,\mu'\in\mathcal{B}_{\tilde{\lambda}}$,
\begin{align*}
	& \bra{\mu'} p_{i}^{(\nu)}p_{i+1}^{(\nu')}p_{i}^{(\nu)} \ket{\mu} \\
	& = \sum_{\upsilon\in\mathcal{I}} \left[ \prod_{j\neq i,i+1} \delta_{\mu_{j}}^{\mu_{j}'} \right] 
	\left(F^{\lambda_{i}\lambda_{i+1}\lambda_{i+2}}_{\upsilon}\right)^{\nu}_{\nu'} 
	\left(\bar{F}^{\lambda_{i}\lambda_{i+1}\lambda_{i+2}}_{\upsilon}\right)^{\nu'}_{\nu}	\\
	& \quad\quad \times \left(\bar{F}^{\mu_{i-1}\lambda_{i}\lambda_{i+1}}_{\mu_{i+1}'}\right)_{\mu_{i}'}^{\nu} 
	\left(F^{\mu_{i-1}\lambda_{i}\lambda_{i+1}}_{\mu_{i+1}}\right)^{\mu_{i}}_{\nu} 
	\left(F^{\mu_{i-1}\nu\lambda_{i+2}}_{\mu_{i+2}}\right)^{\mu_{i+1}}_{\upsilon} 
	\left(\bar{F}^{\mu_{i-1}\nu\lambda_{i+2}}_{\mu_{i+2}}\right)^{\upsilon}_{\mu_{i+1}'} \\
	& = c \sum_{\upsilon\in\mathcal{I}} \left[ \prod_{j\neq i,i+1} \delta_{\mu_{j}}^{\mu_{j}'} \right] 
	\left(\bar{F}^{\mu_{i-1}\lambda_{i}\lambda_{i+1}}_{\mu_{i+1}'}\right)_{\mu_{i}'}^{\nu} 
	\left(F^{\mu_{i-1}\lambda_{i}\lambda_{i+1}}_{\mu_{i+1}}\right)^{\mu_{i}}_{\nu} 
	\left(F^{\mu_{i-1}\nu\lambda_{i+2}}_{\mu_{i+2}}\right)^{\mu_{i+1}}_{\upsilon} 
	\left(\bar{F}^{\mu_{i-1}\nu\lambda_{i+2}}_{\mu_{i+2}}\right)^{\upsilon}_{\mu_{i+1}'} \\
	& = c \left[ \prod_{j\neq i,i+1} \delta_{\mu_{j}}^{\mu_{j}'} \right] \delta_{\mu_{i+1}}^{\mu_{i+1}'} N_{\mu_{i-1}\nu}^{\mu_{i+1}} N_{\mu_{i+1}\lambda_{i+2}}^{\mu_{i+2}}
	\left(\bar{F}^{\mu_{i-1}\lambda_{i}\lambda_{i+1}}_{\mu_{i+1}'}\right)_{\mu_{i}'}^{\nu} 
	\left(F^{\mu_{i-1}\lambda_{i}\lambda_{i+1}}_{\mu_{i+1}}\right)^{\mu_{i}}_{\nu} \\
	& = c \bra{\mu'} p_{i}^{(\nu)} \ket{\mu}.
\end{align*}
Thus we have proven and if and only if concerning solutions of Temperley-Lieb like relations.
\end{calc}

\begin{calc} \label{calcTLlikeRel2}
We consider operators acting on the space $\mathcal{H}_{\tilde{\lambda}}$ where $\tilde{\lambda}=(\lambda_{1},...,\lambda_{L})$ and $|\mathcal{B}_{\tilde{\lambda}}|\neq 0$, along with some fixed $2\leq i \leq L-1$ and elements $\nu,\nu'\in\mathcal{I}$. Consider the scenario that there exists $c\in\textbf{R}$ such 
\begin{align*}
	\bra{\mu'} p_{i}^{(\nu)}p_{i-1}^{(\nu')}p_{i}^{(\nu)} \ket{\mu} = c \bra{\mu'} p_{i}^{(\nu)} \ket{\mu}
\end{align*}
for all $\mu,\mu'\in \mathcal{B}_{\tilde{\lambda}}$. Using Calculation \ref{calcpppV2} and inversion relations we are able to see that the we must have
\begin{align*}
	c \delta_{\mu_{i-1}}^{\mu_{i-1}'} N_{\mu_{i-2}\lambda_{i-1}}^{\mu_{i-1}} N_{\lambda_{i}\lambda_{i+1}}^{\nu} N_{\mu_{i-1}\nu}^{\mu_{i+1}} 
	= \sum_{\upsilon\in\mathcal{I}} \left(\bar{F}^{\lambda_{i-1}\lambda_{i}\lambda_{i+1}}_{\upsilon}\right)^{\nu}_{\nu'} 	\left(F^{\lambda_{i-1}\lambda_{i}\lambda_{i+1}}_{\upsilon}\right)^{\nu'}_{\nu} \left(F^{\mu_{i-2}\lambda_{i-1}\nu}_{\mu_{i+1}}\right)^{\mu_{i-1}}_{\upsilon} 	\left(\bar{F}^{\mu_{i-2}\lambda_{i-1}\nu}_{\mu_{i+1}}\right)^{\upsilon}_{\mu_{i-1}'}  
\end{align*}
for all $\mu_{i-2},\mu_{i-1},\mu_{i-1}',\mu_{i+1}\in\mathcal{I}$. This can be further simplified to
\begin{align*}
	c N_{\lambda_{i}\lambda_{i+1}}^{\nu} N_{\lambda_{i-1}\nu}^{\upsilon_{1}} N_{\upsilon_{2}\upsilon_{1}}^{\upsilon_{3}}  
	= N_{\upsilon_{2}\upsilon_{1}}^{\upsilon_{3}}  \left(\bar{F}^{\lambda_{i-1}\lambda_{i}\lambda_{i+1}}_{\upsilon_{1}}\right)^{\nu}_{\nu'} 		\left(F^{\lambda_{i-1}\lambda_{i}\lambda_{i+1}}_{\upsilon_{1}}\right)^{\nu'}_{\nu} 		
\end{align*}
for all $\upsilon_{1},\upsilon_{2},\upsilon_{3}\in\mathcal{I}$ where $\sum_{\mu_{0}\in\mathcal{I}_{0}} N_{u_{0}\lambda_{1}\cdots\lambda_{i-2}}^{\upsilon_{2}} \geq 1$. \\

\noindent
On the other hand suppose there exists $c\in\textbf{R}$ such that
\begin{align*}
	c N_{\lambda_{i}\lambda_{i+1}}^{\nu} N_{\lambda_{i-1}\nu}^{\upsilon_{1}} N_{\upsilon_{2}\upsilon_{1}}^{\upsilon_{3}}  
	= N_{\upsilon_{2}\upsilon_{1}}^{\upsilon_{3}} \left(\bar{F}^{\lambda_{i-1}\lambda_{i}\lambda_{i+1}}_{\upsilon_{1}}\right)^{\nu}_{\nu'} 		\left(F^{\lambda_{i-1}\lambda_{i}\lambda_{i+1}}_{\upsilon_{1}}\right)^{\nu'}_{\nu} 		
\end{align*}
for all $\upsilon_{1},\upsilon_{2},\upsilon_{3}\in\mathcal{I}$. It follows that for any $\mu,\mu'\in \mathcal{B}_{\tilde{\lambda}}$
\begin{align*}
	& \bra{\mu'} p_{i}^{(\nu)}p_{i-1}^{(\nu')}p_{i}^{(\nu)} \ket{\mu} \\
	& = \sum_{\upsilon\in\mathcal{I}} \left[ \prod_{j\neq i-1,i} \delta_{\mu_{j}}^{\mu_{j}'} \right] 
	\left(\bar{F}^{\lambda_{i-1}\lambda_{i}\lambda_{i+1}}_{\upsilon}\right)^{\nu}_{\nu'} 
	\left(F^{\lambda_{i-1}\lambda_{i}\lambda_{i+1}}_{\upsilon}\right)^{\nu'}_{\nu}  \\
	& \quad\quad \times 
	\left(\bar{F}^{\mu_{i-1}'\lambda_{i}\lambda_{i+1}}_{\mu_{i+1}}\right)_{\mu_{i}'}^{\nu} 
	\left(F^{\mu_{i-1}\lambda_{i}\lambda_{i+1}}_{\mu_{i+1}}\right)^{\mu_{i}}_{\nu} 
	\left(F^{\mu_{i-2}\lambda_{i-1}\nu}_{\mu_{i+1}}\right)^{\mu_{i-1}}_{\upsilon} 
	\left(\bar{F}^{\mu_{i-2}\lambda_{i-1}\nu}_{\mu_{i+1}}\right)^{\upsilon}_{\mu_{i-1}'} \\
	& = c \sum_{\upsilon\in\mathcal{I}} \left[ \prod_{j\neq i-1,i} \delta_{\mu_{j}}^{\mu_{j}'} \right] 
	\left(\bar{F}^{\mu_{i-1}'\lambda_{i}\lambda_{i+1}}_{\mu_{i+1}}\right)_{\mu_{i}'}^{\nu} 
	\left(F^{\mu_{i-1}\lambda_{i}\lambda_{i+1}}_{\mu_{i+1}}\right)^{\mu_{i}}_{\nu} 
	\left(F^{\mu_{i-2}\lambda_{i-1}\nu}_{\mu_{i+1}}\right)^{\mu_{i-1}}_{\upsilon} 
	\left(\bar{F}^{\mu_{i-2}\lambda_{i-1}\nu}_{\mu_{i+1}}\right)^{\upsilon}_{\mu_{i-1}'} \\
	& = c \sum_{\upsilon\in\mathcal{I}} \left[ \prod_{j\neq i-1,i} \delta_{\mu_{j}}^{\mu_{j}'} \right] \delta_{\mu_{i-1}}^{\mu_{i-1}'} N_{\mu_{i-2}\lambda_{i-1}}^{\mu_{i-1}} N_{\mu_{i-1}\nu}^{\mu_{i+1}} \left(\bar{F}^{\mu_{i-1}'\lambda_{i}\lambda_{i+1}}_{\mu_{i+1}}\right)_{\mu_{i}'}^{\nu} 	\left(F^{\mu_{i-1}\lambda_{i}\lambda_{i+1}}_{\mu_{i+1}}\right)^{\mu_{i}}_{\nu} \\
	& = c \bra{\mu'}p_{i}^{(\nu)} \ket{\mu}
\end{align*}
\end{calc}

\ifPrivateMode
\subsection{Alternate Proof of Corollary \ref{CorOneDProjOpRel}} \label{SecProofOneDProjOpRel}
We present an alternate proof of Corollary \ref{CorOneDProjOpRel} in two calculations.
\begin{calc} \label{calcTLlike1dRel1}
Recall that we are aiming to prove that for $1 \leq i \leq L-2$ fixed
\begin{align*}
	p^{(\nu)}_{i}p^{(\nu')}_{i+1}p^{(\nu)}_{i} = \left(F^{\lambda_{i}\lambda_{i+1}\lambda_{i+2}}_{\phi^{l}_{\nu}(\lambda_{i+2})}\right)^{\nu}_{\nu'} \left(\bar{F}^{\lambda_{i}\lambda_{i+1}\lambda_{i+2}}_{\phi^{l}_{\nu}(\lambda_{i+2})}\right)^{\nu'}_{\nu} \,\, p^{(\nu)}_{i},
\end{align*}
where $\nu$ acts one-dimensionally on the left of $\lambda_{i+2}$. This is trivially true when $N_{\lambda_{i}\lambda_{i+1}}^{\nu}N_{\lambda_{i+1}\lambda_{i+2}}^{\nu} = 0$ and thus we now consider the case where $N_{\lambda_{i}\lambda_{i+1}}^{\nu}N_{\lambda_{i+1}\lambda_{i+2}}^{\nu} \neq 0$. Using Calculation \ref{calcpppV1} with $\mu,\mu'''\in\mathcal{B}_{\tilde{\lambda}}$ we find that:
\begin{align*}
	& \bra{\mu'''} p_{i}^{(\nu)}p_{i+1}^{(\nu')}p_{i}^{(\nu)} \ket{\mu} \\
	& = \left[ \prod_{j\neq i,i+1} \delta_{\mu_{j}}^{\mu_{j}'''} \right] 
	\left(F^{\lambda_{i}\lambda_{i+1}\lambda_{i+2}}_{\phi^{l}_{\nu}(\lambda_{i+2})}\right)^{\nu}_{\nu'} 
	\left(\bar{F}^{\lambda_{i}\lambda_{i+1}\lambda_{i+2}}_{\phi^{l}_{\nu}(\lambda_{i+2})}\right)^{\nu'}_{\nu}	\\
	& \quad\quad \times \left(\bar{F}^{\mu_{i-1}\lambda_{i}\lambda_{i+1}}_{\mu_{i+1}'''}\right)_{\mu_{i}'''}^{\nu} 
	\left(F^{\mu_{i-1}\lambda_{i}\lambda_{i+1}}_{\mu_{i+1}}\right)^{\mu_{i}}_{\nu} 
	\left(F^{\mu_{i-1}\nu\lambda_{i+2}}_{\mu_{i+2}}\right)^{\mu_{i+1}}_{\phi^{l}_{\nu}(\lambda_{i+2})} 
	\left(\bar{F}^{\mu_{i-1}\nu\lambda_{i+2}}_{\mu_{i+2}}\right)^{\phi^{l}_{\nu}(\lambda_{i+2})}_{\mu_{i+1}'''} \\
	& = \delta_{\mu_{i+1}'''}^{\mu_{i+1}} \left[ \prod_{j\neq i,i+1} \delta_{\mu_{j}}^{\mu_{j}'''} \right] 
	\left(F^{\lambda_{i}\lambda_{i+1}\lambda_{i+2}}_{\phi^{l}_{\nu}(\lambda_{i+2})}\right)^{\nu}_{\nu'} 
	\left(\bar{F}^{\lambda_{i}\lambda_{i+1}\lambda_{i+2}}_{\phi^{l}_{\nu}(\lambda_{i+2})}\right)^{\nu'}_{\nu}	\\
	& \quad\quad \times \left(\bar{F}^{\mu_{i-1}\lambda_{i}\lambda_{i+1}}_{\mu_{i+1}'''}\right)_{\mu_{i}'''}^{\nu} 
	\left(F^{\mu_{i-1}\lambda_{i}\lambda_{i+1}}_{\mu_{i+1}}\right)^{\mu_{i}}_{\nu} 
	(1-\delta_{N_{\mu_{i-1}\nu}^{\mu_{i+1}}}^{0}) (1-\delta_{N_{\mu_{i+1}\lambda_{i+2}}^{\mu_{i+2}}}^{0}) \\
	& =  \left(F^{\lambda_{i}\lambda_{i+1}\lambda_{i+2}}_{\phi_{\nu}'(\lambda_{i+2})}\right)^{\nu}_{\nu'} \left(\bar{F}^{\lambda_{i}\lambda_{i+1}\lambda_{i+2}}_{\phi_{\nu}'(\lambda_{i+2})}\right)^{\nu'}_{\nu} \bra{\mu'''} p^{(\nu)}_{i} \ket{\mu},
\end{align*}
where we have used
\begin{align*}
	\left(F^{\mu_{i-1}\nu\lambda_{i+2}}_{\mu_{i+2}}\right)^{\mu_{i+1}}_{\phi^{l}_{\nu}(\lambda_{i+2})} 
	\left(\bar{F}^{\mu_{i-1}\nu\lambda_{i+2}}_{\mu_{i+2}}\right)^{\phi^{l}_{\nu}(\lambda_{i+2})}_{\mu_{i+1}'''} & = \delta_{\mu_{i+1}'''}^{\mu_{i+1}} (1-\delta_{N_{\mu_{i-1}\nu}^{\mu_{i+1}}}^{0}) (1-\delta_{N_{\mu_{i+1}\lambda_{i+2}}^{\mu_{i+2}}}^{0}).\\
\end{align*}
\end{calc}

\begin{calc} \label{calcTLlike1dRel2}
Recall that we are aiming to prove that for a 
\begin{align*}
	p^{(\nu)}_{i}p^{(\nu')}_{i-1}p^{(\nu)}_{i} = \left(F^{\lambda_{i-1}\lambda_{i}\lambda_{i+1}}_{\phi^{r}_{\nu}(\lambda_{i-1})}\right)^{\nu'}_{\nu} \left(\bar{F}^{\lambda_{i-1}\lambda_{i}\lambda_{i+1}}_{\phi^{r}_{\nu}(\lambda_{i-1})}\right)^{\nu}_{\nu'} \,\, p^{(\nu)}_{i} \hspace{2cm} 2 \leq i \leq L-1
\end{align*}
where $\nu$ acts one-dimensionally on the right of $\lambda_{i-1}$.  This is trivially true when $N_{\lambda_{i}\lambda_{i+1}}^{\nu}N_{\lambda_{i-1}\lambda_{i}}^{\nu'} = 0$ and thus we now consider the case when $N_{\lambda_{i}\lambda_{i+1}}^{\nu}N_{\lambda_{i-1}\lambda_{i}}^{\nu'} \neq 0$. Using Calculation \ref{calcpppV2} with $\mu,\mu'''\in\mathcal{B}_{\tilde{\lambda}}$ we find that:
\begin{align*}
	& \bra{\mu'''} p_{i}^{(\nu)}p_{i-1}^{(\nu')}p_{i}^{(\nu)} \ket{\mu} \\
	& = \left[ \prod_{j\neq i-1,i} \delta_{\mu_{j}}^{\mu_{j}'''} \right] 
	\left(\bar{F}^{\lambda_{i-1}\lambda_{i}\lambda_{i+1}}_{\phi^{r}_{\nu}(\lambda_{i-1})}\right)^{\nu}_{\nu'} 
	\left(F^{\lambda_{i-1}\lambda_{i}\lambda_{i+1}}_{\phi^{r}_{\nu}(\lambda_{i-1})}\right)^{\nu'}_{\nu}  \\
	& \quad\quad \times 
	\left(\bar{F}^{\mu_{i-1}'''\lambda_{i}\lambda_{i+1}}_{\mu_{i+1}}\right)_{\mu_{i}'''}^{\nu} 
	\left(F^{\mu_{i-1}\lambda_{i}\lambda_{i+1}}_{\mu_{i+1}}\right)^{\mu_{i}}_{\nu} 
	\left(F^{\mu_{i-2}\lambda_{i-1}}_{\mu_{i+1}}\right)^{\mu_{i-1}}_{\phi^{r}_{\nu}(\lambda_{i-1})} 
	\left(\bar{F}^{\mu_{i-2}\lambda_{i-1}\nu}_{\mu_{i+1}}\right)^{\phi^{r}_{\nu}(\lambda_{i-1})}_{\mu_{i-1}'''} \\
	& = \delta_{\mu_{i-1}'''}^{\mu_{i-1}} \left[ \prod_{j\neq i-1,i} \delta_{\mu_{j}}^{\mu_{j}'''} \right] 
	\left(\bar{F}^{\lambda_{i-1}\lambda_{i}\lambda_{i+1}}_{\phi^{r}_{\nu}(\lambda_{i-1})}\right)^{\nu}_{\nu'} 
	\left(F^{\lambda_{i-1}\lambda_{i}\lambda_{i+1}}_{\phi^{r}_{\nu}(\lambda_{i-1})}\right)^{\nu'}_{\nu}  \\
	& \quad\quad \times 
	\left(\bar{F}^{\mu_{i-1}'''\lambda_{i}\lambda_{i+1}}_{\mu_{i+1}}\right)_{\mu_{i}'''}^{\nu} 
	\left(F^{\mu_{i-1}\lambda_{i}\lambda_{i+1}}_{\mu_{i+1}}\right)^{\mu_{i}}_{\nu} 
	(1-\delta_{N_{\mu_{i-2}\lambda_{i-1}}^{\mu_{i-1}}}^{0}) (1-\delta_{N_{\mu_{i-1}\nu}^{\mu_{i+1}}}^{0}) \\
	& = \left(F^{\lambda_{i-1}\lambda_{i}\lambda_{i+1}}_{\phi^{r}_{\nu}(\lambda_{i-1})}\right)^{\nu'}_{\nu} \left(\bar{F}^{\lambda_{i-1}\lambda_{i}\lambda_{i+1}}_{\phi^{r}_{\nu}(\lambda_{i-1})}\right)^{\nu}_{\nu'} \times \bra{\mu'''} p_{i}^{(\nu)} \ket{\mu}
\end{align*}
where we have used
\begin{align*}
	\left(F^{\mu_{i-2}\lambda_{i-1}\nu}_{\mu_{i+1}}\right)^{\mu_{i-1}}_{\phi^{r}_{\nu}(\lambda_{i-1})} 
	\left(\bar{F}^{\mu_{i-2}\lambda_{i-1}\nu}_{\mu_{i+1}}\right)^{\phi^{r}_{\nu}(\lambda_{i-1})}_{\mu_{i-1}'''}  & = \delta_{\mu_{i-1}'''}^{\mu_{i-1}} (1-\delta_{N_{\mu_{i-2}\lambda_{i-1}}^{\mu_{i-1}}}^{0}) (1-\delta_{N_{\mu_{i-1}\nu}^{\mu_{i+1}}}^{0}).\\
\end{align*}
\end{calc}
\fi

\subsection{BMW like relations}
\begin{calc} \label{calcRRppp1}
We wish to prove that if $\nu$ acts one-dimensionally on $\lambda$ and there exists $c\in\R^{\times}$ such that
\begin{align*}
  c = \left(F^{\lambda\lambda\lambda}_{\phi_{\nu}(\lambda)}\right)^{\nu}_{\nu} \left(\bar{F}^{\lambda\lambda\lambda}_{\phi_{\nu}(\lambda)}\right)^{\nu}_{\nu},
\end{align*}
then
\begin{align*}
	R_{i}R_{i+1}p_{i}^{(\nu)} = c^{-1} \left[ R^{\lambda\nu}_{\phi_{\nu}(\lambda)} \left(\bar{F}^{\lambda\lambda\lambda}_{\phi_{\nu}(\lambda)}\right)^{\nu}_{\nu}  \right] p_{i+1}^{(\nu)}p_{i}^{(\nu)}, \\
	R_{i}^{-1}R_{i+1}^{-1}p_{i}^{(\nu)} = c^{-1} \left[ \bar{R}^{\lambda\nu}_{\phi_{\nu}(\lambda)} \left(\bar{F}^{\lambda\lambda\lambda}_{\phi_{\nu}(\lambda)}\right)^{\nu}_{\nu} \right] p_{i+1}^{(\nu)}p_{i}^{(\nu)}, \\
	p_{i+1}^{(\nu)}R_{i}R_{i+1} = c^{-1} \left[ R^{\lambda\nu}_{\phi_{\nu}(\lambda)} \left(\bar{F}^{\lambda\lambda\lambda}_{\phi_{\nu}(\lambda)}\right)^{\nu}_{\nu}  \right] p_{i+1}^{(\nu)}p_{i}^{(\nu)}, \\
	p_{i+1}^{(\nu)}R_{i}^{-1}R_{i+1}^{-1} = c^{-1} \left[ \bar{R}^{\lambda\nu}_{\phi_{\nu}(\lambda)} \left(\bar{F}^{\lambda\lambda\lambda}_{\phi_{\nu}(\lambda)}\right)^{\nu}_{\nu} \right] p_{i+1}^{(\nu)}p_{i}^{(\nu)}.
\end{align*}
We first compute the product of projection operators on the right hand side of the above relations,
\begin{align*}
	& \bra{\mu'''} p_{i+1}^{(\nu)}p_{i}^{(\nu)} \ket{\mu} \\
	& = \sum_{\mu',\mu''\in\mathcal{B}_{\tilde{\lambda}}} \bra{\mu'''} p_{i+1}^{(\nu)}\ket{\mu'}\bra{\mu'}p_{i}^{(\nu)} \ket{\mu}\\
	& = \sum_{\mu',\mu''\in\mathcal{B}_{\tilde{\lambda}}} \delta_{\mu_{i+1}}^{\mu_{i+1}'}\delta_{\mu_{i}'}^{\mu_{i}'''} \, \left[ \prod_{j\neq i,i+1} \delta_{\mu_{j}}^{\mu_{j}'}\delta_{\mu_{j}}^{\mu_{j}'''} \right] 
	\times \left(\bar{F}^{\mu_{i-1}\lambda\lambda}_{\mu_{i+1}}\right)_{\mu_{i}'''}^{\nu} \left(F^{\mu_{i-1}\lambda\lambda}_{\mu_{i+1}}\right)^{\mu_{i}}_{\nu}
	\left(\bar{F}^{\mu_{i}'''\lambda\lambda}_{\mu_{i+2}}\right)_{\mu_{i+1}'''}^{\nu} \left(F^{\mu_{i}'''\lambda\lambda}_{\mu_{i+2}}\right)^{\mu_{i+1}}_{\nu} \\
	& = \left[ \prod_{j\neq i,i+1}\delta_{\mu_{j}}^{\mu_{j}'''} \right] 
	\times \left(\bar{F}^{\mu_{i-1}\lambda\lambda}_{\mu_{i+1}}\right)_{\mu_{i}'''}^{\nu} \left(F^{\mu_{i-1}\lambda\lambda}_{\mu_{i+1}}\right)^{\mu_{i}}_{\nu}
	\left(\bar{F}^{\mu_{i}'''\lambda\lambda}_{\mu_{i+2}}\right)_{\mu_{i+1}'''}^{\nu} \left(F^{\mu_{i}'''\lambda\lambda}_{\mu_{i+2}}\right)^{\mu_{i+1}}_{\nu}. \\
\end{align*}
Next, we compute the left hand side of the top relation with $\mu,\mu'''\in\mathcal{B}_{\tilde{\lambda}}$,
\begin{align*}
	& \bra{\mu'''} R_{i}R_{i+1}p_{i}^{(\nu)} \ket{\mu} \\
	& = \sum_{\nu',\nu''\in\mathcal{I}} \sum_{\mu',\mu''\in\mathcal{B}_{\tilde{\lambda}}} R^{\lambda\lambda}_{\nu''}R^{\lambda\lambda}_{\nu'} \bra{\mu'''} p_{i}^{(\nu'')}\ket{\mu''}\bra{\mu''}p_{i+1}^{(\nu')}\ket{\mu'}\bra{\mu'}p_{i}^{(\nu)} \ket{\mu}\\
	& = \sum_{\upsilon_{3},\nu',\nu''\in\mathcal{I}} \left[ \prod_{j\neq i,i+1} \delta_{\mu_{j}}^{\mu_{j}'''} \right] R^{\lambda\lambda}_{\nu''}R^{\lambda\lambda}_{\nu'}
	\left(F^{\lambda\lambda\lambda}_{\upsilon_{3}}\right)^{\nu}_{\nu'} 
	\left(\bar{F}^{\lambda\lambda\lambda}_{\upsilon_{3}}\right)^{\nu'}_{\nu''}	
	\left(\bar{F}^{\mu_{i-1}\lambda\lambda}_{\mu_{i+1}'''}\right)_{\mu_{i}'''}^{\nu''} 
	\left(F^{\mu_{i-1}\lambda\lambda}_{\mu_{i+1}}\right)^{\mu_{i}}_{\nu} 
	\left(F^{\mu_{i-1}\nu\lambda}_{\mu_{i+2}}\right)^{\mu_{i+1}}_{\upsilon_{3}} 
	\left(\bar{F}^{\mu_{i-1}\nu''\lambda}_{\mu_{i+2}}\right)^{\upsilon_{3}}_{\mu_{i+1}'''} \\
	& = R^{\lambda\nu}_{\phi_{\nu}(\lambda)} \sum_{\upsilon_{3},\nu''\in\mathcal{I}} \left[ \prod_{j\neq i,i+1} \delta_{\mu_{j}}^{\mu_{j}'''} \right] \left(\bar{F}^{\lambda\lambda\lambda}_{\upsilon_{3}}\right)^{\nu}_{\nu''} \left(\bar{F}^{\mu_{i-1}\lambda\lambda}_{\mu_{i+1}'''}\right)_{\mu_{i}'''}^{\nu''} 
	\left(F^{\mu_{i-1}\lambda\lambda}_{\mu_{i+1}}\right)^{\mu_{i}}_{\nu} 
	\left(F^{\mu_{i-1}\nu\lambda}_{\mu_{i+2}}\right)^{\mu_{i+1}}_{\upsilon_{3}} 
	\left(\bar{F}^{\mu_{i-1}\nu''\lambda}_{\mu_{i+2}}\right)^{\upsilon_{3}}_{\mu_{i+1}'''} \\
	& = R^{\lambda\nu}_{\phi_{\nu}(\lambda)} \sum_{\upsilon_{3}\in\mathcal{I}} \left[ \prod_{j\neq i,i+1} \delta_{\mu_{j}}^{\mu_{j}'''} \right] 
	\left(F^{\mu_{i-1}\lambda\lambda}_{\mu_{i+1}}\right)^{\mu_{i}}_{\nu} \left(\bar{F}^{\mu_{i}'''\lambda\lambda}_{\mu_{i+2}}\right)^{\nu}_{\mu_{i+1}'''}
	\left(F^{\mu_{i-1}\nu\lambda}_{\mu_{i+2}}\right)^{\mu_{i+1}}_{\upsilon_{3}}
	\left(\bar{F}^{\mu_{i-1}\lambda\nu}_{\mu_{i+2}}\right)^{\upsilon_{3}}_{\mu_{i}'''} \\
	& = c^{-1} R^{\lambda\nu}_{\phi_{\nu}(\lambda)} \sum_{\upsilon_{3},\upsilon_{4}\in\mathcal{I}} \left[ \prod_{j\neq i,i+1} \delta_{\mu_{j}}^{\mu_{j}'''} \right] \delta_{\upsilon_{4}}^{\mu_{i}'''} 
	\left(F^{\mu_{i-1}\lambda\lambda}_{\mu_{i+1}}\right)^{\mu_{i}}_{\nu} \left(\bar{F}^{\mu_{i}'''\lambda\lambda}_{\mu_{i+2}}\right)^{\nu}_{\mu_{i+1}'''}
	\left(\bar{F}^{\mu_{i-1}\lambda\lambda}_{\mu_{i+1}}\right)^{\nu}_{\upsilon_{4}} 
	\left(F^{\upsilon_{4}\lambda\lambda}_{\mu_{i+2}}\right)^{\mu_{i+1}}_{\nu}
	\left(\bar{F}^{\lambda\lambda\lambda}_{\upsilon_{3}}\right)^{\nu}_{\nu} \\
	& = c^{-1} \left[R^{\lambda\nu}_{\phi_{\nu}(\lambda)} \left(\bar{F}^{\lambda\lambda\lambda}_{\phi_{\nu}(\lambda)}\right)^{\nu}_{\nu}  \right] \times \bra{\mu'''} p_{i+1}^{(\nu)}p_{i}^{(\nu)} \ket{\mu}.
\end{align*}
In the above computation we have used Calculation \ref{calcpppV1} and the following special cases of the axioms of braided tensor systems,
\begin{align*}
	\sum_{\nu'\in\mathcal{I}} \left(F^{\lambda\lambda\lambda}_{\upsilon_{3}}\right)^{\nu}_{\nu'} R^{\lambda\lambda}_{\nu'} \left(\bar{F}^{\lambda\lambda\lambda}_{\upsilon_{3}}\right)^{\nu'}_{\nu''} R^{\lambda\lambda}_{\nu''} & = R^{\lambda\nu}_{\upsilon_{3}} \left(\bar{F}^{\lambda\lambda\lambda}_{\upsilon_{3}}\right)^{\nu}_{\nu''}, \\
	\sum_{\nu''} \left(\bar{F}^{\lambda\lambda\lambda}_{\upsilon_{3}}\right)^{\nu}_{\nu''}
	\left(\bar{F}^{\mu_{i-1}\nu''\lambda}_{\mu_{i+2}}\right)^{\upsilon_{3}}_{\mu_{i+1}'''} 
	\left(\bar{F}^{\mu_{i-1}\lambda\lambda}_{\mu_{i+1}'''}\right)^{\nu''}_{\mu_{i}'''} 
	& =  \left(\bar{F}^{\mu_{i-1}\lambda\nu}_{\mu_{i+2}}\right)^{\upsilon_{3}}_{\mu_{i}'''} \left(\bar{F}^{\mu_{i}'''\lambda\lambda}_{\mu_{i+2}}\right)^{\nu}_{\mu_{i+1}'''}, \\
	\sum_{\upsilon_{4}} \left(\bar{F}^{\mu_{i-1}\lambda\lambda}_{\mu_{i+1}}\right)^{\nu}_{\upsilon_{4}} \left(F^{\upsilon_{4}\lambda\lambda}_{\mu_{i+2}}\right)^{\mu_{i+1}}_{\nu} \left(F^{\mu_{i-1}\lambda\nu}_{\mu_{i+2}}\right)^{\upsilon_{4}}_{\upsilon_{3}} & = \left(F^{\mu_{i-1}\nu\lambda}_{\mu_{i+2}}\right)^{\mu_{i+1}}_{\upsilon_{3}} \left(F^{\lambda\lambda\lambda}_{\upsilon_{3}}\right)^{\nu}_{\nu}, \\
	\left(F^{\mu_{i-1}\lambda\nu}_{\mu_{i+2}}\right)^{\upsilon_{4}}_{\phi_{\nu}(\lambda)}
	\left(\bar{F}^{\mu_{i-1}\lambda\nu}_{\mu_{i+2}}\right)^{\phi_{\nu}(\lambda)}_{\mu_{i}'''} & = \delta_{\upsilon_{4}}^{\mu_{i}'''}  (1-\delta^{0}_{N_{\mu_{i}'''\nu}^{\mu_{i+2}}}) (1-\delta^{0}_{N_{\mu_{i-1}\lambda}^{\mu_{i}'''}}).
\end{align*}
We now investigate the second relation and calculate that for $\mu,\mu'''\in\mathcal{B}_{\tilde{\lambda}}$ we have,
\begin{align*}
	& \bra{\mu'''} R_{i}^{-1}R_{i+1}^{-1}p_{i}^{(\nu)} \ket{\mu} \\
	& = \sum_{\nu',\nu''\in\mathcal{I}} \sum_{\mu',\mu''\in\mathcal{B}_{\tilde{\lambda}}} \bar{R}^{\lambda\lambda}_{\nu''} \bar{R}^{\lambda\lambda}_{\nu'} \bra{\mu'''} p_{i}^{(\nu'')}\ket{\mu''}\bra{\mu''}p_{i+1}^{(\nu')}\ket{\mu'}\bra{\mu'}p_{i}^{(\nu)} \ket{\mu}\\
	& = \sum_{\nu',\nu'',\upsilon_{3}\in\mathcal{I}} \left[ \prod_{j\neq i,i+1} \delta_{\mu_{j}}^{\mu_{j}'''} \right] \bar{R}^{\lambda\lambda}_{\nu'} \bar{R}^{\lambda\lambda}_{\nu''} \left(F^{\lambda\lambda\lambda}_{\upsilon_{3}}\right)^{\nu}_{\nu'}  \left(\bar{F}^{\lambda\lambda\lambda}_{\upsilon_{3}}\right)^{\nu'}_{\nu''} 
	\left(F^{\mu_{i-1}\nu\lambda}_{\mu_{i+2}}\right)^{\mu_{i+1}}_{\upsilon_{3}}
	\left(F^{\mu_{i-1}\lambda\lambda}_{\mu_{i+1}}\right)^{\mu_{i}}_{\nu}
	\left(\bar{F}^{\mu_{i-1}\lambda\lambda}_{\mu_{i+1}'''}\right)_{\mu_{i}'''}^{\nu''}
	\left(\bar{F}^{\mu_{i-1}\nu''\lambda}_{\mu_{i+2}}\right)^{\upsilon_{3}}_{\mu_{i+1}'''} \\
	& = \bar{R}^{\lambda\nu}_{\phi_{\nu}(\lambda)}  \sum_{\nu'',\upsilon_{3}\in\mathcal{I}} \left[ \prod_{j\neq i,i+1} \delta_{\mu_{j}}^{\mu_{j}'''} \right] 
	\left(F^{\mu_{i-1}\nu\lambda}_{\mu_{i+2}}\right)^{\mu_{i+1}}_{\upsilon_{3}}
	\left(F^{\mu_{i-1}\lambda\lambda}_{\mu_{i+1}}\right)^{\mu_{i}}_{\nu}
	\left(\bar{F}^{\lambda\lambda\lambda}_{\upsilon_{3}}\right)^{\nu}_{\nu''}
	\left(\bar{F}^{\mu_{i-1}\nu''\lambda}_{\mu_{i+2}}\right)^{\upsilon_{3}}_{\mu_{i+1}'''}
	\left(\bar{F}^{\mu_{i-1}\lambda\lambda}_{\mu_{i+1}'''}\right)_{\mu_{i}'''}^{\nu''} \\
	& = c^{-1} \left[ \bar{R}^{\lambda\nu}_{\phi_{\nu}(\lambda)} \left(\bar{F}^{\lambda\lambda\lambda}_{\phi_{\nu}(\lambda)}\right)^{\nu}_{\nu} \right] \times \bra{\mu'''} p_{i+1}^{(\nu)}p_{i}^{(\nu)} \ket{\mu}.
\end{align*}
In this computation we have used Calculation \ref{calcpppV1}, relations found in the previous computation of this calculation as well as the following special case of the hexagon relation,
\begin{align*}
	\sum_{\nu'\in\mathcal{I}} \left(F^{\lambda\lambda\lambda}_{\upsilon_{3}}\right)^{\nu}_{\nu'} \bar{R}^{\lambda\lambda}_{\nu'} \left(\bar{F}^{\lambda\lambda\lambda}_{\upsilon_{3}}\right)^{\nu'}_{\nu''} \bar{R}^{\lambda\lambda}_{\nu''} & = \bar{R}^{\lambda\nu}_{\upsilon_{3}} \left(\bar{F}^{\lambda\lambda\lambda}_{\upsilon_{3}}\right)^{\nu}_{\nu''}.
\end{align*}
We now consider the left hand side of the third relation and calculate that for $\mu,\mu'''\in\mathcal{B}_{\tilde{\lambda}}$ we have,
\begin{align*}
	& \bra{\mu'''} p_{i+1}^{(\nu)}R_{i}R_{i+1} \ket{\mu} \\
	& = \sum_{\nu',\nu''\in\mathcal{I}} \sum_{\mu',\mu''\in\mathcal{B}_{\tilde{\lambda}}} R^{\lambda\lambda}_{\nu''}R^{\lambda\lambda}_{\nu'} \bra{\mu'''} p_{i+1}^{(\nu)}\ket{\mu''}\bra{\mu''}p_{i}^{(\nu')}\ket{\mu'}\bra{\mu'}p_{i+1}^{(\nu'')} \ket{\mu}\\
	& = \sum_{\upsilon_{3},\nu',\nu''\in\mathcal{I}} \left[ \prod_{j\neq i,i+1} \delta_{\mu_{j}}^{\mu_{j}'''} \right] 
	R^{\lambda\lambda}_{\nu''}R^{\lambda\lambda}_{\nu'} 
	\left(\bar{F}^{\lambda\lambda\lambda}_{\upsilon_{3}}\right)^{\nu''}_{\nu'} 
	\left(F^{\lambda\lambda\lambda}_{\upsilon_{3}}\right)^{\nu'}_{\nu}
	\left(\bar{F}^{\mu_{i}'''\lambda\lambda}_{\mu_{i+2}}\right)_{\mu_{i+1}'''}^{\nu} 
	\left(F^{\mu_{i}\lambda\lambda}_{\mu_{i+2}}\right)^{\mu_{i+1}}_{\nu''} 
	\left(F^{\mu_{i-1}\lambda\nu''}_{\mu_{i+2}}\right)^{\mu_{i}}_{\upsilon_{3}} 
	\left(\bar{F}^{\mu_{i-1}\lambda\nu}_{\mu_{i+2}}\right)^{\upsilon_{3}}_{\mu_{i}'''} \\
	& = R^{\lambda\nu}_{\phi_{\nu}(\lambda)} \sum_{\upsilon_{3},\nu''\in\mathcal{I}} \left[\prod_{j\neq i,i+1}\delta_{\mu_{j}}^{\mu_{j}'''} \right] 
	\left(\bar{F}^{\lambda\lambda\lambda}_{\upsilon_{3}}\right)^{\nu''}_{\nu} 
	\left(\bar{F}^{\mu_{i}'''\lambda\lambda}_{\mu_{i+2}}\right)_{\mu_{i+1}'''}^{\nu} 
	\left(F^{\mu_{i}\lambda\lambda}_{\mu_{i+2}}\right)^{\mu_{i+1}}_{\nu''} 
	\left(F^{\mu_{i-1}\lambda\nu''}_{\mu_{i+2}}\right)^{\mu_{i}}_{\upsilon_{3}} 
	\left(\bar{F}^{\mu_{i-1}\lambda\nu}_{\mu_{i+2}}\right)^{\upsilon_{3}}_{\mu_{i}'''} \\
	& = R^{\lambda\nu}_{\phi_{\nu}(\lambda)} \sum_{\upsilon_{3}\in\mathcal{I}} \left[ \prod_{j\neq i,i+1} \delta_{\mu_{j}}^{\mu_{j}'''} \right] 
	\left(\bar{F}^{\mu_{i}'''\lambda\lambda}_{\mu_{i+2}}\right)_{\mu_{i+1}'''}^{\nu} 
	\left(\bar{F}^{\mu_{i-1}\lambda\nu}_{\mu_{i+2}}\right)^{\upsilon_{3}}_{\mu_{i}'''} 
	\left(F^{\mu_{i-1}\lambda\lambda}_{\mu_{i+1}}\right)^{\mu_{i}}_{\nu} 
	\left(F^{\mu_{i-1}\nu\lambda}_{\mu_{i+2}}\right)^{\mu_{i+1}}_{\upsilon_{3}} \\
	& = c^{-1} R^{\lambda\nu}_{\phi_{\nu}(\lambda)}  \sum_{\upsilon_{3},\upsilon_{4}\in\mathcal{I}} \left[ \prod_{j\neq i,i+1} \delta_{\mu_{j}}^{\mu_{j}'''} \right] \delta_{\mu_{i}'''}^{\upsilon_{4}} 
	\left(\bar{F}^{\mu_{i}'''\lambda\lambda}_{\mu_{i+2}}\right)_{\mu_{i+1}'''}^{\nu} 
	\left(F^{\mu_{i-1}\lambda\lambda}_{\mu_{i+1}}\right)^{\mu_{i}}_{\nu} 
	\left(\bar{F}^{\mu_{i-1}\lambda\lambda}_{\mu_{i+1}}\right)^{\nu}_{\upsilon_{4}} 
	\left(F^{\upsilon_{4}\lambda\lambda}_{\mu_{i+2}}\right)^{\mu_{i+1}}_{\nu} 
	\left(\bar{F}^{\lambda\lambda\lambda}_{\upsilon_{3}}\right)^{\nu}_{\nu} \\
	& = c^{-1} \left[ R^{\lambda\nu}_{\phi_{\nu}(\lambda)} \left(\bar{F}^{\lambda\lambda\lambda}_{\phi_{\nu}(\lambda)}\right)^{\nu}_{\nu} \right] \times \bra{\mu'''} p_{i+1}^{(\nu)}p_{i}^{(\nu)} \ket{\mu}.
\end{align*}
In this computation we have used Calculation \ref{calcpppV2}, relations found in the previous computation of this calculation as well as the following special cases of the axioms of the braided tensor system,
\begin{align*}
	\sum_{\nu'\in\mathcal{I}} R^{\lambda\lambda}_{\nu''} \left(\bar{F}^{\lambda\lambda\lambda}_{\upsilon_{3}}\right)^{\nu''}_{\nu'} R^{\lambda\lambda}_{\nu'} \left(F^{\lambda\lambda\lambda}_{\upsilon_{3}}\right)^{\nu'}_{\nu} & = \left(\bar{F}^{\lambda\lambda\lambda}_{\upsilon_{3}}\right)^{\nu''}_{\nu} R^{\lambda\nu}_{\upsilon_{3}}, \\
	\sum_{\nu''\in\mathcal{I}}  \left(F^{\mu_{i}\lambda\lambda}_{\mu_{i+2}}\right)^{\mu_{i+1}}_{\nu''} \left(F^{\mu_{i-1}\lambda\nu''}_{\mu_{i+2}}\right)^{\mu_{i}}_{\upsilon_{3}}\left(\bar{F}^{\lambda\lambda\lambda}_{\upsilon_{3}}\right)^{\nu''}_{\nu} & = \left(F^{\mu_{i-1}\lambda\lambda}_{\mu_{i+1}}\right)^{\mu_{i}}_{\nu} \left(F^{\mu_{i-1}\nu\lambda}_{\mu_{i+2}}\right)^{\mu_{i+1}}_{\upsilon_{3}}, \\
	\sum_{\upsilon_{4}\mathcal{I}} \left(\bar{F}^{\mu_{i-1}\lambda\lambda}_{\mu_{i+1}}\right)^{\nu}_{\upsilon_{4}} \left(F^{\upsilon_{4}\lambda\lambda}_{\mu_{i+2}}\right)^{\mu_{i+1}}_{\nu} \left(F^{\mu_{i-1}\lambda\nu}_{\mu_{i+2}}\right)^{\upsilon_{4}}_{\upsilon_{3}} & = \left(F^{\mu_{i-1}\nu\lambda}_{\mu_{i+2}}\right)^{\mu_{i+1}}_{\upsilon_{3}} \left(F^{\lambda\lambda\lambda}_{\upsilon_{3}}\right)^{\nu}_{\nu}, \\
	\left(\bar{F}^{\mu_{i-1}\lambda\nu}_{\mu_{i+2}}\right)^{\phi_{\nu}(\lambda)}_{\mu_{i}'''} \left(F^{\mu_{i-1}\lambda\nu}_{\mu_{i+2}}\right)^{\upsilon_{4}}_{\phi_{\nu}(\lambda)} & = \delta_{\mu_{i}'''}^{\upsilon_{4}} (1-\delta_{N_{\mu_{i-1}\lambda}^{\upsilon_{4}}}^{0}) (1-\delta_{N_{\upsilon_{4}\nu}^{\mu_{i+2}}}^{0}).
\end{align*}
We now consider the left hand side of the fourth relation and calculate that for $\mu,\mu'''\in\mathcal{B}_{\tilde{\lambda}}$ we have,
\begin{align*}
	& \bra{\mu'''} p_{i+1}^{(\nu)}R_{i}^{-1}R_{i+1}^{-1} \ket{\mu} \\
	& = \sum_{\nu',\nu''\in\mathcal{I}} \sum_{\mu',\mu''\in\mathcal{B}_{\tilde{\lambda}}} \bar{R}^{\lambda\lambda}_{\nu''}\bar{R}^{\lambda\lambda}_{\nu'} \bra{\mu'''} p_{i+1}^{(\nu)}\ket{\mu''}\bra{\mu''}p_{i}^{(\nu')}\ket{\mu'}\bra{\mu'}p_{i+1}^{(\nu'')} \ket{\mu}\\
	& = \sum_{\upsilon_{3},\nu',\nu''\in\mathcal{I}} \left[ \prod_{j\neq i,i+1} \delta_{\mu_{j}}^{\mu_{j}'''} \right] 
	\bar{R}^{\lambda\lambda}_{\nu''}\bar{R}^{\lambda\lambda}_{\nu'} \left(\bar{F}^{\lambda\lambda\lambda}_{\upsilon_{3}}\right)^{\nu''}_{\nu'} 
	\left(F^{\lambda\lambda\lambda}_{\upsilon_{3}}\right)^{\nu'}_{\nu}
	\left(\bar{F}^{\mu_{i}'''\lambda\lambda}_{\mu_{i+2}}\right)_{\mu_{i+1}'''}^{\nu} 
	\left(F^{\mu_{i}\lambda\lambda}_{\mu_{i+2}}\right)^{\mu_{i+1}}_{\nu''} 
	\left(F^{\mu_{i-1}\lambda\nu''}_{\mu_{i+2}}\right)^{\mu_{i}}_{\upsilon_{3}} 
	\left(\bar{F}^{\mu_{i-1}\lambda\nu}_{\mu_{i+2}}\right)^{\upsilon_{3}}_{\mu_{i}'''} \\
	& = \bar{R}^{\lambda\nu}_{\phi_{\nu}(\lambda)}  \sum_{\upsilon_{3},\nu''\in\mathcal{I}} \left[ \prod_{j\neq i,i+1} \delta_{\mu_{j}}^{\mu_{j}'''} \right] 
	\left(\bar{F}^{\lambda\lambda\lambda}_{\upsilon_{3}}\right)^{\nu''}_{\nu} 
	\left(\bar{F}^{\mu_{i}'''\lambda\lambda}_{\mu_{i+2}}\right)_{\mu_{i+1}'''}^{\nu} 
	\left(F^{\mu_{i}\lambda\lambda}_{\mu_{i+2}}\right)^{\mu_{i+1}}_{\nu''} 
	\left(F^{\mu_{i-1}\lambda\nu''}_{\mu_{i+2}}\right)^{\mu_{i}}_{\upsilon_{3}} 
	\left(\bar{F}^{\mu_{i-1}\lambda\nu}_{\mu_{i+2}}\right)^{\upsilon_{3}}_{\mu_{i}'''} \\
	& = c^{-1} \left[ \bar{R}^{\lambda\nu}_{\phi_{\nu}(\lambda)} \left(\bar{F}^{\lambda\lambda\lambda}_{\phi_{\nu}(\lambda)}\right)^{\nu}_{\nu} \right] \times \bra{\mu'''} p_{i+1}^{(\nu)}p_{i}^{(\nu)} \ket{\mu}.
\end{align*}
In this computation we have used Calculation \ref{calcpppV2}, relations found in the previous computation of this calculation as well as the following special case of the hexagon relation,
\begin{align*}
	\sum_{\nu'\in\mathcal{I}} \bar{R}^{\lambda\lambda}_{\nu''} \left(\bar{F}^{\lambda\lambda\lambda}_{\upsilon_{3}}\right)^{\nu''}_{\nu'} \bar{R}^{\lambda\lambda}_{\nu'} \left(F^{\lambda\lambda\lambda}_{\upsilon_{3}}\right)^{\nu'}_{\nu} 
& = \left(\bar{F}^{\lambda\lambda\lambda}_{\upsilon_{3}}\right)^{\nu''}_{\nu} \bar{R}^{\lambda\nu}_{\upsilon_{3}}. 
\end{align*}
\end{calc}

\begin{calc} \label{calcRRppp2}
We wish to prove that if $\nu$ acts one-dimensionally on $\lambda$ and there exists $c\in\R^{\times}$ such that
\begin{align*}
  c = \left(F^{\lambda\lambda\lambda}_{\phi_{\nu}(\lambda)}\right)^{\nu}_{\nu} \left(\bar{F}^{\lambda\lambda\lambda}_{\phi_{\nu}(\lambda)}\right)^{\nu}_{\nu},
\end{align*}
then
\begin{align*}
	R_{i}R_{i-1}p_{i}^{(\nu)} & = c^{-1} \left[	R^{\nu\lambda}_{\phi_{\nu}(\lambda)} \left(F^{\lambda\lambda\lambda}_{\phi_{\nu}(\lambda)}\right)^{\nu}_{\nu} \right] p_{i-1}^{(\nu)}p_{i}^{(\nu)} \\
	R_{i}^{-1}R_{i-1}^{-1}p_{i}^{(\nu)} & = c^{-1} \left[ \bar{R}^{\nu\lambda}_{\phi_{\nu}(\lambda)} \left(F^{\lambda\lambda\lambda}_{\phi_{\nu}(\lambda)}\right)^{\nu}_{\nu} \right] p_{i-1}^{(\nu)}p_{i}^{(\nu)} \\
	p_{i-1}^{(\nu)}R_{i}R_{i-1} & = c^{-1} \left[	R^{\nu\lambda}_{\phi_{\nu}(\lambda)} \left(F^{\lambda\lambda\lambda}_{\phi_{\nu}(\lambda)}\right)^{\nu}_{\nu} \right] p_{i-1}^{(\nu)}p_{i}^{(\nu)} \\
	p_{i-1}^{(\nu)}R_{i}^{-1}R_{i-1}^{-1} & = c^{-1} \left[ \bar{R}^{\nu\lambda}_{\phi_{\nu}(\lambda)} \left(F^{\lambda\lambda\lambda}_{\phi_{\nu}(\lambda)}\right)^{\nu}_{\nu} \right] p_{i-1}^{(\nu)}p_{i}^{(\nu)}
\end{align*}
We first find
\begin{align*}
	& \bra{\mu'''} p_{i-1}^{(\nu)}p_{i}^{(\nu)} \ket{\mu} \\
	& = \sum_{\mu',\mu''\in\mathcal{B}_{\tilde{\lambda}}} \bra{\mu'''} p_{i-1}^{(\nu)}\ket{\mu'}\bra{\mu'}p_{i}^{(\nu)} \ket{\mu}\\
	& = \sum_{\mu',\mu''\in\mathcal{B}_{\tilde{\lambda}}} \delta_{\mu_{i-1}}^{\mu_{i-1}'}\delta_{\mu_{i}'}^{\mu_{i}'''} \, \left[ \prod_{j\neq i-1,i}
	 \delta_{\mu_{j}}^{\mu_{j}'}\delta_{\mu_{j}}^{\mu_{j}'''} \right] 
	\times \left(\bar{F}^{\mu_{i-2}\lambda\lambda}_{\mu_{i}'''}\right)_{\mu_{i-1}'''}^{\nu} \left(F^{\mu_{i-2}\lambda\lambda}_{\mu_{i}'''}\right)^{\mu_{i-1}}_{\nu} 
	\left(\bar{F}^{\mu_{i-1}\lambda\lambda}_{\mu_{i+1}}\right)_{\mu_{i}'''}^{\nu} \left(F^{\mu_{i-1}\lambda\lambda}_{\mu_{i+1}}\right)^{\mu_{i}}_{\nu} \\
	& = \left[ \prod_{j\neq i-1,i}	\delta_{\mu_{j}}^{\mu_{j}'''} \right] 
	\times \left(\bar{F}^{\mu_{i-2}\lambda\lambda}_{\mu_{i}'''}\right)_{\mu_{i-1}'''}^{\nu} \left(F^{\mu_{i-2}\lambda\lambda}_{\mu_{i}'''}\right)^{\mu_{i-1}}_{\nu} 
	\left(\bar{F}^{\mu_{i-1}\lambda\lambda}_{\mu_{i+1}}\right)_{\mu_{i}'''}^{\nu} \left(F^{\mu_{i-1}\lambda\lambda}_{\mu_{i+1}}\right)^{\mu_{i}}_{\nu}. \\
\end{align*}
Next, we compute the LHS of the top relation:
\begin{align*}
	& \bra{\mu'''} R_{i}R_{i-1}p_{i}^{(\nu)} \ket{\mu} \\
	& = \sum_{\nu',\nu''\in\mathcal{I}} \sum_{\mu',\mu''\in\mathcal{B}_{\tilde{\lambda}}} R^{\lambda\lambda}_{\nu''}R^{\lambda\lambda}_{\nu'} \bra{\mu'''} p_{i}^{(\nu'')}\ket{\mu''}\bra{\mu''}p_{i-1}^{(\nu')}\ket{\mu'}\bra{\mu'}p_{i}^{(\nu)} \ket{\mu}\\
	& = \sum_{\upsilon_{3},\nu',\nu''\in\mathcal{I}} \left[ \prod_{j\neq i-1,i} \delta_{\mu_{j}}^{\mu_{j}'''} \right] 
	R^{\lambda\lambda}_{\nu''}R^{\lambda\lambda}_{\nu'}
	\left(\bar{F}^{\lambda\lambda\lambda}_{\upsilon_{3}}\right)^{\nu}_{\nu'} 
	\left(F^{\lambda\lambda\lambda}_{\upsilon_{3}}\right)^{\nu'}_{\nu''} 
	\left(\bar{F}^{\mu_{i-1}'''\lambda\lambda}_{\mu_{i+1}}\right)_{\mu_{i}'''}^{\nu''} 
	\left(F^{\mu_{i-1}\lambda\lambda}_{\mu_{i+1}}\right)^{\mu_{i}}_{\nu} 
	\left(F^{\mu_{i-2}\lambda\nu}_{\mu_{i+1}}\right)^{\mu_{i-1}}_{\upsilon_{3}} 
	\left(\bar{F}^{\mu_{i-2}\lambda\nu''}_{\mu_{i+1}}\right)^{\upsilon_{3}}_{\mu_{i-1}'''} \\
	& = R^{\nu\lambda}_{\phi_{\nu}(\lambda)} \sum_{\upsilon_{3},\nu',\nu''\in\mathcal{I}} \left[ \prod_{j\neq i-1,i} \delta_{\mu_{j}}^{\mu_{j}'''} \right] 
	\left(F^{\lambda\lambda\lambda}_{\upsilon_{3}}\right)^{\nu}_{\nu''} 
	\left(\bar{F}^{\mu_{i-1}'''\lambda\lambda}_{\mu_{i+1}}\right)_{\mu_{i}'''}^{\nu''} 
	\left(F^{\mu_{i-1}\lambda\lambda}_{\mu_{i+1}}\right)^{\mu_{i}}_{\nu} 
	\left(F^{\mu_{i-2}\lambda\nu}_{\mu_{i+1}}\right)^{\mu_{i-1}}_{\upsilon_{3}} 
	\left(\bar{F}^{\mu_{i-2}\lambda\nu''}_{\mu_{i+1}}\right)^{\upsilon_{3}}_{\mu_{i-1}'''} \\
	& = R^{\nu\lambda}_{\phi_{\nu}(\lambda)} \sum_{\upsilon_{3}\in\mathcal{I}} \left[ \prod_{j\neq i-1,i} \delta_{\mu_{j}}^{\mu_{j}'''} \right] 
	\left(\bar{F}^{\mu_{i-2}\lambda\lambda}_{\mu_{i}'''}\right)^{\nu}_{\mu_{i-1}'''}
	\left(F^{\mu_{i-1}\lambda\lambda}_{\mu_{i+1}}\right)^{\mu_{i}}_{\nu} 
	\left(\bar{F}^{\mu_{i-2}\nu\lambda}_{\mu_{i+1}}\right)^{\upsilon_{3}}_{\mu_{i}'''} 
	\left(F^{\mu_{i-2}\lambda\nu}_{\mu_{i+1}}\right)^{\mu_{i-1}}_{\upsilon_{3}} \\
	& = c^{-1} R^{\nu\lambda}_{\phi_{\nu}(\lambda)} \sum_{\upsilon_{3}\in\mathcal{I}} \left[ \prod_{j\neq i-1,i} \delta_{\mu_{j}}^{\mu_{j}'''} \right] 
	\delta_{\upsilon_{4}}^{\mu_{i}'''} \left(F^{\lambda\lambda\lambda}_{\upsilon_{3}}\right)^{\nu}_{\nu} 
	\left(\bar{F}^{\mu_{i-2}\lambda\lambda}_{\mu_{i}'''}\right)^{\nu}_{\mu_{i-1}'''}
	\left(F^{\mu_{i-1}\lambda\lambda}_{\mu_{i+1}}\right)^{\mu_{i}}_{\nu} \left(\bar{F}^{\mu_{i-1}\lambda\lambda}_{\mu_{i+1}}\right)^{\nu}_{\upsilon_{4}} \left(F^{\mu_{i-2}\lambda\lambda}_{\upsilon_{4}}\right)^{\mu_{i-1}}_{\nu} \\
	& = c^{-1} \left[	R^{\nu\lambda}_{\phi_{\nu}(\lambda)} \left(F^{\lambda\lambda\lambda}_{\phi_{\nu}(\lambda)}\right)^{\nu}_{\nu} \right] \times \bra{\mu'''} p_{i-1}^{(\nu)}p_{i}^{(\nu)} \ket{\mu}
\end{align*}
where we have used Calculation \ref{calcpppV2} and the relations
\begin{align*}
	\sum_{\nu'\in\mathcal{I}} \left(\bar{F}^{\lambda\lambda\lambda}_{\upsilon_{3}}\right)^{\nu}_{\nu'} R^{\lambda\lambda}_{\nu'} \left(F^{\lambda\lambda\lambda}_{\upsilon_{3}}\right)^{\nu'}_{\nu''} R^{\lambda\lambda}_{\nu''} & = R^{\nu\lambda}_{\upsilon_{3}} \left(F^{\lambda\lambda\lambda}_{\upsilon_{3}}\right)^{\nu}_{\nu''} \\
	\sum_{\nu''} \left(F^{\lambda\lambda\lambda}_{\upsilon_{3}}\right)_{\nu''}^{\nu} \left(\bar{F}^{\mu_{i-2}\lambda\nu''}_{\mu_{i+1}}\right)^{\upsilon_{3}}_{\mu_{i-1}'''} \left(\bar{F}^{\mu_{i-1}'''\lambda\lambda}_{\mu_{i+1}}\right)^{\nu''}_{\mu_{i}'''} & = \left(\bar{F}^{\mu_{i-2}\nu\lambda}_{\mu_{i+1}}\right)^{\upsilon_{3}}_{\mu_{i}'''} \left(\bar{F}^{\mu_{i-2}\lambda\lambda}_{\mu_{i}'''}\right)^{\nu}_{\mu_{i-1}'''} \\
	\sum_{\upsilon_{4}} \left(\bar{F}^{\mu_{i-1}\lambda\lambda}_{\mu_{i+1}}\right)^{\nu}_{\upsilon_{4}} \left(F^{\mu_{i-2}\lambda\lambda}_{\upsilon_{4}}\right)^{\mu_{i-1}}_{\nu} \left(F^{\mu_{i-2}\nu\lambda}_{\mu_{i+1}}\right)^{\upsilon_{4}}_{\upsilon_{3}} & = \left(F^{\mu_{i-2}\lambda\nu}_{\mu_{i+1}}\right)^{\mu_{i-1}}_{\upsilon_{3}} \left(\bar{F}^{\lambda\lambda\lambda}_{\upsilon_{3}}\right)^{\nu}_{\nu} \\
	\left(F^{\mu_{i-2}\nu\lambda}_{\mu_{i+1}}\right)^{\upsilon_{4}}_{\phi_{\nu}(\lambda)} \left(\bar{F}^{\mu_{i-2}\nu\lambda}_{\mu_{i+1}}\right)^{\phi_{\nu}(\lambda)}_{\mu_{i}'''} & = \delta_{\upsilon_{4}}^{\mu_{i}'''} (1-\delta_{N_{\mu_{i-2}\nu}^{\upsilon_{4}}}^{0}) (1-\delta_{N_{\upsilon_{4}\lambda}^{\mu_{i+1}}}^{0})
\end{align*}
This proves the top relation. We now consider the second relation
\begin{align*}
	& \bra{\mu'''} R_{i}^{-1}R_{i-1}^{-1}p_{i}^{(\nu)} \ket{\mu} \\
	& = \sum_{\nu',\nu''\in\mathcal{I}} \sum_{\mu',\mu''\in\mathcal{B}_{\tilde{\lambda}}} R^{\lambda\lambda}_{\nu''}R^{\lambda\lambda}_{\nu'} \bra{\mu'''} p_{i}^{(\nu'')}\ket{\mu''}\bra{\mu''}p_{i-1}^{(\nu')}\ket{\mu'}\bra{\mu'}p_{i}^{(\nu)} \ket{\mu}\\
	& = \sum_{\upsilon_{3},\nu',\nu''\in\mathcal{I}} \left[ \prod_{j\neq i-1,i} \delta_{\mu_{j}}^{\mu_{j}'''} \right] 
	\bar{R}^{\lambda\lambda}_{\nu''}\bar{R}^{\lambda\lambda}_{\nu'}
	\left(\bar{F}^{\lambda\lambda\lambda}_{\upsilon_{3}}\right)^{\nu}_{\nu'} 
	\left(F^{\lambda\lambda\lambda}_{\upsilon_{3}}\right)^{\nu'}_{\nu''} 
	\left(\bar{F}^{\mu_{i-1}'''\lambda\lambda}_{\mu_{i+1}}\right)_{\mu_{i}'''}^{\nu''} 
	\left(F^{\mu_{i-1}\lambda\lambda}_{\mu_{i+1}}\right)^{\mu_{i}}_{\nu} 
	\left(F^{\mu_{i-2}\lambda\nu}_{\mu_{i+1}}\right)^{\mu_{i-1}}_{\upsilon_{3}} 
	\left(\bar{F}^{\mu_{i-2}\lambda\nu''}_{\mu_{i+1}}\right)^{\upsilon_{3}}_{\mu_{i-1}'''} \\
	& = \bar{R}^{\nu\lambda}_{\phi_{\nu}(\lambda)} \sum_{\upsilon_{3},\nu''\in\mathcal{I}} \left[ \prod_{j\neq i-1,i} \delta_{\mu_{j}}^{\mu_{j}'''} \right] 
	\left(F^{\lambda\lambda\lambda}_{\upsilon_{3}}\right)^{\nu}_{\nu''}
	\left(\bar{F}^{\mu_{i-1}'''\lambda\lambda}_{\mu_{i+1}}\right)_{\mu_{i}'''}^{\nu''} 
	\left(F^{\mu_{i-1}\lambda\lambda}_{\mu_{i+1}}\right)^{\mu_{i}}_{\nu} 
	\left(F^{\mu_{i-2}\lambda\nu}_{\mu_{i+1}}\right)^{\mu_{i-1}}_{\upsilon_{3}} 
	\left(\bar{F}^{\mu_{i-2}\lambda\nu''}_{\mu_{i+1}}\right)^{\upsilon_{3}}_{\mu_{i-1}'''} \\
	& = c^{-1} \left[	\bar{R}^{\nu\lambda}_{\phi_{\nu}(\lambda)} \left(F^{\lambda\lambda\lambda}_{\phi_{\nu}(\lambda)}\right)^{\nu}_{\nu} \right] \times \bra{\mu'''} p_{i-1}^{(\nu)}p_{i}^{(\nu)} \ket{\mu}
\end{align*}
where we have used Calculation \ref{calcpppV2} and the relationship written previously as well as
\begin{align*}
	\sum_{\nu'\in\mathcal{I}} \left(\bar{F}^{\lambda\lambda\lambda}_{\upsilon_{3}}\right)^{\nu}_{\nu'} \bar{R}^{\lambda\lambda}_{\nu'} \left(F^{\lambda\lambda\lambda}_{\upsilon_{3}}\right)^{\nu'}_{\nu''} \bar{R}^{\lambda\lambda}_{\nu''} & =  \bar{R}^{\nu\lambda}_{\upsilon_{3}} \left(F^{\lambda\lambda\lambda}_{\upsilon_{3}}\right)^{\nu}_{\nu''}
\end{align*}
This proves the second relationship. We now consider the third relation
\begin{align*}
	& \bra{\mu'''} p_{i-1}^{(\nu)}R_{i}R_{i-1} \ket{\mu} \\
	& = \sum_{\nu',\nu''\in\mathcal{I}} \sum_{\mu',\mu''\in\mathcal{B}_{\tilde{\lambda}}} R^{\lambda\lambda}_{\nu''}R^{\lambda\lambda}_{\nu'} \bra{\mu'''} p_{i-1}^{(\nu)}\ket{\mu''}\bra{\mu''}p_{i}^{(\nu')}\ket{\mu'}\bra{\mu'}p_{i-1}^{(\nu'')} \ket{\mu}\\
	& = \sum_{\upsilon_{3},\nu',\nu''\in\mathcal{I}} \left[ \prod_{j\neq i-1,i} \delta_{\mu_{j}}^{\mu_{j}'''} \right] R^{\lambda\lambda}_{\nu''}R^{\lambda\lambda}_{\nu'}
	\left(F^{\lambda\lambda\lambda}_{\upsilon_{3}}\right)^{\nu''}_{\nu'} 
	\left(\bar{F}^{\lambda\lambda\lambda}_{\upsilon_{3}}\right)^{\nu'}_{\nu}
	\left(\bar{F}^{\mu_{i-2}\lambda\lambda}_{\mu_{i}'''}\right)_{\mu_{i-1}'''}^{\nu} 
	\left(F^{\mu_{i-2}\lambda\lambda}_{\mu_{i}}\right)^{\mu_{i-1}}_{\nu''} 
	\left(F^{\mu_{i-2}\nu''\lambda}_{\mu_{i+1}}\right)^{\mu_{i}}_{\upsilon_{3}} 
	\left(\bar{F}^{\mu_{i-2}\nu\lambda}_{\mu_{i+1}}\right)^{\upsilon_{3}}_{\mu_{i}'''} \\
	& = R^{\nu\lambda}_{\phi_{\nu}(\lambda)} \sum_{\upsilon_{3},\nu''\in\mathcal{I}} \left[ \prod_{j\neq i-1,i} \delta_{\mu_{j}}^{\mu_{j}'''} \right] 
	\left(F^{\lambda\lambda\lambda}_{\upsilon_{3}}\right)^{\nu''}_{\nu}
	\left(\bar{F}^{\mu_{i-2}\lambda\lambda}_{\mu_{i}'''}\right)_{\mu_{i-1}'''}^{\nu} 
	\left(F^{\mu_{i-2}\lambda\lambda}_{\mu_{i}}\right)^{\mu_{i-1}}_{\nu''} 
	\left(F^{\mu_{i-2}\nu''\lambda}_{\mu_{i+1}}\right)^{\mu_{i}}_{\upsilon_{3}} 
	\left(\bar{F}^{\mu_{i-2}\nu\lambda}_{\mu_{i+1}}\right)^{\upsilon_{3}}_{\mu_{i}'''} \\
	& = R^{\nu\lambda}_{\phi_{\nu}(\lambda)} \sum_{\upsilon_{3}\in\mathcal{I}} \left[ \prod_{j\neq i-1,i} \delta_{\mu_{j}}^{\mu_{j}'''} \right] 
	\left(\bar{F}^{\mu_{i-2}\lambda\lambda}_{\mu_{i}'''}\right)_{\mu_{i-1}'''}^{\nu} 
	\left(\bar{F}^{\mu_{i-2}\nu\lambda}_{\mu_{i+1}}\right)^{\upsilon_{3}}_{\mu_{i}'''}  
	\left(F^{\mu_{i-1}\lambda\lambda}_{\mu_{i+1}}\right)^{\mu_{i}}_{\nu} 
	\left(F^{\mu_{i-2}\lambda\nu}_{\mu_{i+1}}\right)^{\mu_{i-1}}_{\upsilon_{3}}\\
	& = c^{-1} R^{\nu\lambda}_{\phi_{\nu}(\lambda)} \sum_{\upsilon_{3},\upsilon_{4}\in\mathcal{I}} \left[ \prod_{j\neq i-1,i} \delta_{\mu_{j}}^{\mu_{j}'''} \right] \delta_{\upsilon_{4}}^{\mu_{i-1}} 
	\left(\bar{F}^{\mu_{i-2}\lambda\lambda}_{\mu_{i}'''}\right)_{\mu_{i-1}'''}^{\nu} 
	\left(F^{\mu_{i-1}\lambda\lambda}_{\mu_{i+1}}\right)^{\mu_{i}}_{\nu} 
	\left(F^{\lambda\lambda\lambda}_{\upsilon_{3}}\right)^{\nu}_{\nu} 
	\left(\bar{F}^{\upsilon_{4}\lambda\lambda}_{\mu_{i+1}}\right)^{\nu}_{\mu_{i}'''} 
	\left(F^{\mu_{i-2}\lambda\lambda}_{\mu_{i}'''}\right)^{\upsilon_{4}}_{\nu} \\
	& = c^{-1} \left[ R^{\nu\lambda}_{\phi_{\nu}(\lambda)} \left(F^{\lambda\lambda\lambda}_{\phi_{\nu}(\lambda)}\right)^{\nu}_{\nu} \right]\bra{\mu'''} p_{i-1}^{(\nu)}p_{i}^{(\nu)} \ket{\mu}
\end{align*}
where we have used Calculation \ref{calcpppV1} and the relationship written previously as well as
\begin{align*}
	\sum_{\nu'\in\mathcal{I}}  R^{\lambda\lambda}_{\nu''}\left(F^{\lambda\lambda\lambda}_{\upsilon_{3}}\right)^{\nu''}_{\nu'} R^{\lambda\lambda}_{\nu'} \left(\bar{F}^{\lambda\lambda\lambda}_{\upsilon_{3}}\right)^{\nu'}_{\nu} & = \left(F^{\lambda\lambda\lambda}_{\upsilon_{3}}\right)^{\nu''}_{\nu} R^{\nu\lambda}_{\upsilon_{3}} \\
	\sum_{\nu''\in\mathcal{I}} \left(F^{\mu_{i-2}\lambda\lambda}_{\mu_{i}}\right)^{\mu_{i-1}}_{\nu''} \left(F^{\mu_{i-2}\nu''\lambda}_{\mu_{i+1}}\right)^{\mu_{i}}_{\upsilon_{3}} \left(F^{\lambda\lambda\lambda}_{k}\right)^{\nu''}_{\nu} & = \left(F^{\mu_{i-1}\lambda\lambda}_{\mu_{i+1}}\right)^{\mu_{i}}_{\nu} \left(F^{\mu_{i-2}\lambda\nu}_{\mu_{i+1}}\right)^{\mu_{i-1}}_{\upsilon_{3}} \\
	\sum_{\upsilon_{4}} \left(\bar{F}^{\mu_{i-2}\lambda\nu}_{\mu_{i+1}}\right)^{\upsilon_{3}}_{\upsilon_{4}} \left(\bar{F}^{\upsilon_{4}\lambda\lambda}_{\mu_{i+1}}\right)^{\nu}_{\mu_{i}'''} \left(F^{\mu_{i-2}\lambda\lambda}_{\mu_{i}'''}\right)^{\upsilon_{4}}_{\nu} & = \left(\bar{F}^{\mu_{i-2}\nu\lambda}_{\mu_{i+1}}\right)^{\upsilon_{3}}_{\mu_{i}'''} \left(\bar{F}^{\lambda\lambda\lambda}_{\upsilon_{3}}\right)^{\nu}_{\nu}  \\
	\left(F^{\mu_{i-2}\lambda\nu}_{\mu_{i+1}}\right)^{\mu_{i-1}}_{\phi_{\nu}'(\lambda)} \left(\bar{F}^{\mu_{i-2}\lambda\nu}_{\mu_{i+1}}\right)^{\phi_{\nu}'(\lambda)}_{\upsilon_{4}} & = \delta_{\upsilon_{4}}^{\mu_{i-1}} (1 - \delta_{N_{\mu_{i-1}\nu}^{\mu_{i+1}}}^{0}) (1 - \delta_{N_{\mu_{i-2}\lambda}^{\mu_{i-1}}}^{0})
\end{align*}
This proves the third relationship. We now consider the fourth
\begin{align*}
	& \bra{\mu'''} p_{i-1}^{(\nu)}R_{i}^{-1}R_{i-1}^{-1} \ket{\mu} \\
	& = \sum_{\nu',\nu''\in\mathcal{I}} \sum_{\mu',\mu''\in\mathcal{B}_{\tilde{\lambda}}} \bar{R}^{\lambda\lambda}_{\nu''} \bar{R}^{\lambda\lambda}_{\nu'} \bra{\mu'''} p_{i-1}^{(\nu)}\ket{\mu''}\bra{\mu''}p_{i}^{(\nu')}\ket{\mu'}\bra{\mu'}p_{i-1}^{(\nu'')} \ket{\mu}\\
	& = \sum_{\upsilon_{3},\nu',\nu''\in\mathcal{I}} \left[ \prod_{j\neq i-1,i} \delta_{\mu_{j}}^{\mu_{j}'''} \right] 
	\bar{R}^{\lambda\lambda}_{\nu''} \bar{R}^{\lambda\lambda}_{\nu'}
	\left(F^{\lambda\lambda\lambda}_{\upsilon_{3}}\right)^{\nu''}_{\nu'} 
	\left(\bar{F}^{\lambda\lambda\lambda}_{\upsilon_{3}}\right)^{\nu'}_{\nu}
	\left(\bar{F}^{\mu_{i-2}\lambda\lambda}_{\mu_{i}'''}\right)_{\mu_{i-1}'''}^{\nu} 
	\left(F^{\mu_{i-2}\lambda\lambda}_{\mu_{i}}\right)^{\mu_{i-1}}_{\nu''} 
	\left(F^{\mu_{i-2}\nu''\lambda}_{\mu_{i+1}}\right)^{\mu_{i}}_{\upsilon_{3}} 
	\left(\bar{F}^{\mu_{i-2}\nu\lambda}_{\mu_{i+1}}\right)^{\upsilon_{3}}_{\mu_{i}'''} \\
	& = \bar{R}^{\nu\lambda}_{\phi_{\nu}(\lambda)}  \sum_{\upsilon_{3},\nu''\in\mathcal{I}} \left[ \prod_{j\neq i-1,i} \delta_{\mu_{j}}^{\mu_{j}'''} \right] 
	\left(F^{\lambda\lambda\lambda}_{\upsilon_{3}}\right)^{\nu''}_{\nu} 
	\left(\bar{F}^{\mu_{i-2}\lambda\lambda}_{\mu_{i}'''}\right)_{\mu_{i-1}'''}^{\nu} 
	\left(F^{\mu_{i-2}\lambda\lambda}_{\mu_{i}}\right)^{\mu_{i-1}}_{\nu''} 
	\left(F^{\mu_{i-2}\nu''\lambda}_{\mu_{i+1}}\right)^{\mu_{i}}_{\upsilon_{3}} 
	\left(\bar{F}^{\mu_{i-2}\nu\lambda}_{\mu_{i+1}}\right)^{\upsilon_{3}}_{\mu_{i}'''} \\
	& = c^{-1} \left[ \bar{R}^{\nu\lambda}_{\phi_{\nu}(\lambda)} \left(F^{\lambda\lambda\lambda}_{\phi_{\nu}(\lambda)}\right)^{\nu}_{\nu} \right] \bra{\mu'''} p_{i-1}^{(\nu)}p_{i}^{(\nu)} \ket{\mu}
\end{align*}
where we have used Calculation \ref{calcpppV1} and the relationship written previously as well as
\begin{align*}
 \sum_{\nu'\in\mathcal{I}} \bar{R}^{\lambda\lambda}_{\nu''}\left(F^{\lambda\lambda\lambda}_{\upsilon_{3}}\right)^{\nu''}_{\nu'} \bar{R}^{\lambda\lambda}_{\nu'} \left(\bar{F}^{\lambda\lambda\lambda}_{\upsilon_{3}}\right)^{\nu'}_{\nu} & = \left(F^{\lambda\lambda\lambda}_{\upsilon_{3}}\right)^{\nu''}_{\nu} \bar{R}^{\nu\lambda}_{\upsilon_{3}}.
\end{align*}
Thus the fourth relation is proven.
\end{calc}

\begin{calc} \label{calcRRppp3}
We first determine special cases of the hexagon equation:
\begin{align*}
	\sum_{\upsilon\in\mathcal{I}} \left(\bar{F}^{\lambda\lambda\lambda}_{\phi_{\nu}(\lambda)}\right)^{\nu}_{\upsilon} R^{\lambda\lambda}_{\upsilon}\left(F^{\lambda\lambda\lambda}_{\phi_{\nu}(\lambda)}\right)^{\upsilon}_{\nu} R^{\lambda\lambda}_{\nu} = R^{\nu\lambda}_{\phi_{\nu}(\lambda)} \left(F^{\lambda\lambda\lambda}_{\phi_{\nu}(\lambda)}\right)^{\nu}_{\nu}, \\
	\sum_{\upsilon\in\mathcal{I}} R^{\lambda\lambda}_{\nu} \left(\bar{F}^{\lambda\lambda\lambda}_{\phi_{\nu}(\lambda)}\right)^{\nu}_{\upsilon} R^{\lambda\lambda}_{\upsilon} \left(F^{\lambda\lambda\lambda}_{\phi_{\nu}(\lambda)}\right)^{\upsilon}_{\nu} = \left(\bar{F}^{\lambda\lambda\lambda}_{\phi_{\nu}(\lambda)}\right)^{\nu}_{\nu} R^{\lambda\nu}_{\phi_{\nu}(\lambda)}, \\
	\sum_{\upsilon\in\mathcal{I}} \left(\bar{F}^{\lambda\lambda\lambda}_{\phi_{\nu}(\lambda)}\right)^{\nu}_{\upsilon} \bar{R}^{\lambda\lambda}_{\upsilon}\left(F^{\lambda\lambda\lambda}_{\phi_{\nu}(\lambda)}\right)^{\upsilon}_{\nu} \bar{R}^{\lambda\lambda}_{\nu} = \bar{R}^{\nu\lambda}_{\phi_{\nu}(\lambda)} \left(F^{\lambda\lambda\lambda}_{\phi_{\nu}(\lambda)}\right)^{\nu}_{\nu}, \\
	\sum_{e\in\mathcal{I}}  \bar{R}^{\lambda\lambda}_{\nu} \left(\bar{F}^{\lambda\lambda\lambda}_{\phi_{\nu}(\lambda)}\right)^{\nu}_{\upsilon} \bar{R}^{\lambda\lambda}_{\upsilon} \left(F^{\lambda\lambda\lambda}_{\phi_{\nu}(\lambda)}\right)^{\upsilon}_{\nu} = \left(\bar{F}^{\lambda\lambda\lambda}_{\phi_{\nu}(\lambda)}\right)^{\nu}_{\nu} \bar{R}^{\lambda\nu}_{\phi_{\nu}(\lambda)}. 
\end{align*}
Using Calculations \ref{calcRRppp1} and \ref{calcRRppp2} it follows that if $\nu$ acts one-dimensionally on $\lambda$ and there exists $c\in\R^{\times}$ such that
\begin{align*}
  c = \left(F^{\lambda\lambda\lambda}_{\phi_{\nu}(\lambda)}\right)^{\nu}_{\nu} \left(\bar{F}^{\lambda\lambda\lambda}_{\phi_{\nu}(\lambda)}\right)^{\nu}_{\nu},
\end{align*}
then
\begin{align*}
	R_{i}R_{i\pm1}p_{i}^{(\nu)} 
	= p_{i\pm1}^{(\nu)}R_{i}R_{i\pm1} 
	= c^{-1} \left[R^{\lambda\lambda}_{\nu} \sum_{\upsilon\in\mathcal{I}} \left(F^{\lambda\lambda\lambda}_{\phi_{\nu}(\lambda)}\right)^{\upsilon}_{\nu}  R^{\lambda\lambda}_{\upsilon}  \left(\bar{F}^{\lambda\lambda\lambda}_{\phi_{\nu}(\lambda)}\right)^{\nu}_{\upsilon} \right] p_{i\pm1}^{(\nu)}p_{i}^{(\nu)} \\
	R_{i}^{-1}R_{i\pm1}^{-1}p_{i}^{(\nu)} 
	= p_{i\pm1}^{(\nu)}R_{i}^{-1}R_{i\pm1}^{-1} 
	= c^{-1} \left[\bar{R}^{\lambda\lambda}_{\nu} \sum_{\upsilon\in\mathcal{I}} \left(F^{\lambda\lambda\lambda}_{\phi_{\nu}(\lambda)}\right)^{\upsilon}_{\nu}  R^{\lambda\lambda}_{\upsilon}  \left(\bar{F}^{\lambda\lambda\lambda}_{\phi_{\nu}(\lambda)}\right)^{\nu}_{\upsilon} \right] p_{i\pm1}^{(\nu)}p_{i}^{(\nu)}
\end{align*}
\end{calc}


\begin{thebibliography}{10}

\bibitem{Abrams2009}
S. Abramsky, \textit{Temperley-Lieb algebra: from knot theory to logic and
  computation via quantum mechanics}, arXiv:0910.2737, (2009).

\bibitem{AnBaFo1984}
G.E. Andrews, R.J. Baxter and P.J. Forrester, \textit{Eight-vertex {\rm SOS}
  model and generalized Rogers-Ramanujan-type identities}, J. Statist. Phys.,
  \textbf{35}, 193--266, (1984).

\bibitem{AufKlu2010}
A.K. B~Aufgebauer, \textit{Quantum spin chains of TemperleyLieb type: periodic
  boundary conditions, spectral multiplicities and finite temperature}, J.
  Stat. Mech., P05018, (2010).

\bibitem{BatKun1991}
M.T. Batchelor and A. Kuniba, \textit{Temperley-Lieb lattice models arising
  from quantum groups}, J. Phys. A: Math. Gen., \textbf{24}, 2599, (1991).

\bibitem{BirWen1989}
J.S. Birman and H. Wenzl, \textit{Braids, link polynomials and a new algebra,},
  Trans. Amer. Math. Soc., \textbf{313}, 249-273, (1989).

\bibitem{BonderThesis2007}
P.H. Bonderson, \textit{Non-Abelian anyons and interferometry}, PhD thesis,
  California Institute of Technology, (2007).

\bibitem{DaHaWa2013}
O. Davidovich, T. Hagge and Z. Wang, \textit{On Arithmetic Modular Categories},
  arXiv:1305.2229, (2013).

\bibitem{FraZub1990}
P. Di~Francesco and J.B. Zuber, \textit{{${\rm SU}(N)$} lattice integrable
  models associated with graphs}, Nuclear Phys. B, \textbf{338}, 602--646,
  (1990).

\bibitem{EGNO2009}
P. Etingof, S. Gelaki, D. Nikshych and V. Ostrik, \textit{Tensor Categories},
  Lecture Notes, (2009).

\bibitem{Finch2013}
P.E. Finch, \textit{From spin to anyon notation: The XXZ Heisenberg model as a
  $D_{3}$ (or $su(2)_{4}$) anyon chain}, J. Phys. A: Math. Theor., \textbf{46},
  055305, (2013).
  
\bibitem{FiKaMa2016}
P.E. Finch, Z. K{\'a}d{\'a}r and Paul Martin \textit{From tensor category to Temperley-Lieb algebra representation}, arXiv:1607.08908.

\bibitem{GRS1996}
C. G{\'o}mez, M. Ruiz-Altaba and G. Sierra, \textit{Quantum groups in
  two-dimensional physics}, Cambridge University Press, (1996).

\bibitem{Gould1993}
M.D. Gould, \textit{Quantum double finite group algebras and their
  representations}, Bull. Aust. Math. Soc., \textbf{48}, 275--301, (1993).

\bibitem{KobLim1996}
R. K\"oberle and A. Lima-Santos, \textit{Exact solutions for AD TemperleyLieb
  models}, J. Phys. A: Math. Gen., \textbf{29}, 519531, (1996).

\bibitem{MartinBook1995}
P. Martin, \textit{Potts models and related problems in statistical mechanics},
  World Scientific, (1991).

\bibitem{Muraka1987}
J. Murakami, \textit{The Kauffman polynomail of links and representation
  theory}, Osaka J. Math., \textbf{26}, 745-758, (1987).

\bibitem{Pasquier1987a}
V. Pasquier, \textit{Two-Dimensional Critical Systems Labelled by Dynkin
  Diagrams}, Nucl. Phys. B, \textbf{285}, 162--172, (1987).

\bibitem{Pasqui1988}
V. Pasquier, \textit{Etiology of {IRF} models}, Comm. Math. Phys.,
  \textbf{118}, 355--364, (1988).

\bibitem{Reshet1988}
N.Y. Reshetikhin, \textit{Quantized universal enveloping algebras, the
  Yang-Baxter equation and invariants of links I}, LOMI Preprint, (1988).

\bibitem{TemLie1971}
H. Temperley and E. Lieb, \textit{Relations between the `Percolation' and
  `Colouring' Problem and other Graph-Theoretical Problems Associated with
  Regular Planar Lattices: Some Exact Results for the `Percolation' Problem},
  Proc. R. Soc. Lond. A, \textbf{332}, 251-280, (1971).

\bibitem{WaDeAk1989}
M. Wadati, T. Deguchi and Y. Akutsu, \textit{Exactly solvable models and knot
  theory}, Physics Reports, \textbf{180}, 247332, (1989).

\bibitem{Westbu1995}
B.W. Westbury, \textit{The representation theory of the Temperley-Lieb
  algebras}, Mathematische Zeitschrift, \textbf{219}, 539-565, (1995).

\bibitem{WeylBook1939}
H. Weyl, \textit{The Classical Groups. Their Invariants and Representations},
  Princeton University Press, (1939).

\bibitem{Yamaga2002}
S. Yamagami, \textit{Polygonal presentations of semisimple tensor categories},
  Journal of the Mathematical Society of Japan, \textbf{54}, 61-88, (2002).

\bibitem{Zhang1991}
R.B. Zhang, \textit{Graded representations of the Temperley-Lieb algebra,
  quantum supergroups, and the Jones polynomial}, J. Math. Phys., \textbf{32},
  2605, (1991).

\bibitem{ZKG2005}
Y. Zhang, L.H. Kauffman and M.L. Ge, \textit{Yang--Baxterisation, universal
  quantum gates and Hamiltonians}, Quantum Inf. Process, \textbf{4}, 159--197,
  (2005).

\end{thebibliography}
\end{document}